\newcommand{\mres}{%
  \,\raisebox{-.127ex}{\reflectbox{\rotatebox[origin=br]{-90}{$\lnot$}}}\,%
}
\newcommand{\nwc}{\newcommand}
\nwc{\N}{\mathbb{N}}
\nwc{\R}{\mathbb{R}}
\nwc{\E}{\mathbb{E}}
\nwc{\D}{\partial}
\nwc{\calC}{\mathcal{C}}
\nwc{\calH}{\mathcal{H}}
\nwc{\calI}{\mathcal{I}}
\nwc{\calL}{{\mathcal L}}
\nwc{\calE}{\mathcal{E}}
\nwc{\calF}{\mathcal{F}}
\nwc{\calM}{\mathcal{M}}
\DeclareMathOperator{\dist}{dist}
\DeclareMathOperator{\diam}{diam}
\nwc{\one}{{\mathbbm{1}}}
\nwc{\eps}{\varepsilon}
\nwc{\inv}{^{-1}}
\nwc{\etal}{{\it et al.\ }}
\nwc{\id}{{\rm id}}
\nwc{\wkto}{\xrightharpoonup{\star}}
\newcommand{\vp}{\varphi}
\DeclareMathOperator{\ind}{\mathbbm{I}}
\DeclareMathOperator{\interior}{\rm int}
\nwc{\intr}[1]{{\kern0pt#1^\mathrm{o}}}%
\nwc{\ip}[2]{(#1)\cdot(#2) }
\nwc{\red}[1]{\textcolor{red}{[#1]}}
\nwc{\blue}[1]{\textcolor{blue}{#1}}
\nwc{\point}{\medskip $\bullet$\ }
\nwc{\calA}{{\mathcal A}}
\nwc{\calB}{{\mathcal B}}
\nwc{\calS}{{\mathcal S}}
\nwc{\calR}{{\mathcal R}}
\nwc{\calV}{{\mathcal V}}
\newcommand{\pts}{\psi_t^{*}}
\newcommand{\ptss}{\psi_t^{**}}
\newcommand{\MA}{Monge-Amp\`ere}
\newcommand{\mam}{\kappa}
\newcommand{\mdot}{\!\cdot\!}
\newtheorem{theorem}{Theorem}[section]
\newtheorem{lemma}[theorem]{Lemma}
\newtheorem{prop}[theorem]{Proposition}
\newtheorem{definition}[theorem]{Definition}
\def\th@remark{%
  \thm@headfont{\bfseries}%
  \normalfont % body font
  \thm@preskip\topsep \divide\thm@preskip\tw@
  \thm@postskip\thm@preskip
}
\theoremstyle{remark}
\newtheorem{remark}[theorem]{Remark}
\numberwithin{equation}{section}
\begin{document}
\title{Rigidly breaking potential flows and a countable Alexandrov theorem for polytopes} 
\author[1]{Jian-Guo Liu\footnote{Email address: Jian-Guo.Liu@duke.edu} }
\author[2]{Robert L. Pego\footnote{Email address: rpego@cmu.edu} }
\affil[1]{Departments of Mathematics and Physics, 
Duke University, Durham, NC 27708, USA}
\affil[2]{Department of Mathematical Sciences,
Carnegie Mellon University, Pittsburgh, PA 15213, USA}

\maketitle
\begin{abstract}
We study all the ways that a given convex body in $d$ dimensions can break into countably many 
pieces that move away from each other rigidly at constant velocity, with no rotation or shearing.
The initial velocity field is locally constant a.e., but may be continuous and/or fail to be integrable.
For any choice of mass-velocity pairs for the pieces, such a motion 
can be generated by the gradient of a convex potential that is affine on each piece.  
We classify such potentials in terms of a countable version of a theorem of
Alexandrov for convex polytopes, and prove a stability theorem.
For bounded velocities, there is a bijection between the mass-velocity data
and optimal transport flows (Wasserstein geodesics) that are locally incompressible.

Given any rigidly breaking velocity field that is the gradient of a continuous
potential, the convexity of the potential is established under any of several 
conditions, such as the velocity field being continuous, the potential being semi-convex,
the mass measure generated by a convexified transport potential being absolutely continuous,
or there being a finite number of pieces. 
Also we describe a number of curious and paradoxical examples having fractal structure.
\end{abstract}

 \medskip
\noindent
{\it Keywords: }{Least action, optimal transport, semi-convex functions, 
  power diagrams, Monge-Amp\`ere measures 
  }

 \smallskip
\noindent
{\it Mathematics Subject Classification:} 49Q22, 52B99, 35F21, 58E10, 76B99
\pagebreak

\tableofcontents

\pagebreak

%s------------------------------------------
%s------------------------------------------
%s------------------------------------------
\section{Introduction}\label{s:intro}

Imagine that a brittle body, such as a crystal ball, shatters instantaneously
into pieces which fly apart from each other with constant velocities. 
Experience tells us to expect a large number of shards that
may be extremely small.

To model this in a simple way mathematically, we represent the body by a 
bounded convex open set $\Omega\subset\R^d$, and suppose its mass density is constant
and  normalized to unity.  We suppose that the body shatters into pieces represented by 
a countable collection of pairwise disjoint open subsets $A_i$ 
whose union $A = \bigsqcup_i A_i$ has full Lebesgue measure in $\Omega$.
For simplicity we presume the pieces travel by rigid translation with no rotation.
This means that any point $z$ in $A$ at time $t=0$ is transported to the point
\begin{equation}\label{d:Xt}
    X_t(z) = z + t v(z) 
\end{equation}
at time $t>0$, where the velocity field $v\colon\Omega\to\R^d$ is a constant $v_i$ on $A_i$.
It is natural to require the pieces to remain pairwise disjoint,
thus we require the transport map $X_t$ to be {\em injective} on $A$ for every $t>0$.
Given such a velocity field $v$, we will say that $v$ {\em rigidly breaks} 
$\Omega$ into $A_i$, $i=1,2,\ldots$.
The number of pieces $A_i$ may be finite or countably infinite.

We imagine that by observations around some time $t>0$ after shattering occurs,
we can determine the mass $m_i$ and the velocity $v_i$ for each piece. 
Our first result shows that these data suffice to determine all the pieces 
(and thus the entire flow)  in an essentially unique way,
{provided} we happen to know that the velocity is a {\em gradient of a convex potential}. 

Below, we call any function $\varphi\colon\Omega\to\R$ {\em locally affine a.e.} 
if it is affine on some neighborhood of $x$, for a.e.~$x\in\Omega$.
Given such a function we associate the set
\begin{equation}\label{d:Aset}
    A = \{x\in\Omega:\text{$\vp$ is affine on a neighborhood of $x$}\}.
\end{equation}
This is an open subset of $\Omega$ with full Lebesgue measure 
$\lambda(A)=\lambda(\Omega)$. 
The set $A$ has countably many components $A_i$, $i=1,2,3,\ldots$,
which are open and path-connected.  
For each $i$, $\vp$ is smooth on $A_i$ and its gradient $\vp$ is constant in a 
neighborhood of each point of $A_i$, so by path-connectedness
there must exist $v_i\in\R^d$ and $h_i\in\R$ such that 
\begin{equation}\label{e:affine}
 \varphi(z) = v_i\mdot z + h_i \quad\mbox{for all $z\in A_i$} \,.
\end{equation}
The following characterizes functions that are locally affine a.e.\ and convex.

\begin{theorem}\label{thm_countable}
Let $\Omega\subset\R^d$ be a bounded convex open set, let
$v_1,v_2,\ldots$ be distinct in $\R^d$, and let $m_1,m_2,\ldots$ be positive
so that $\sum_i m_i = \lambda(\Omega)$.
Then there is a function $\varphi$ on $\Omega$
(unique up to adding a constant)
that is locally affine a.e. and convex, 
so $\nabla\varphi=v_i$ on an open convex set $A_i$ with $\lambda(A_i)=m_i$.
\end{theorem}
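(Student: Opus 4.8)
The plan is to recognize $\nabla\varphi$ as a monotone measure-preserving map. Let $\nu:=\sum_i m_i\,\delta_{v_i}$, a (finite) measure on $\R^d$ of total mass $\lambda(\Omega)$, and let $\mu$ be Lebesgue measure restricted to $\Omega$; after normalizing by $\lambda(\Omega)$ these are probability measures and $\mu$ is absolutely continuous. By McCann's theorem on monotone measure-preserving maps — which needs no moment hypothesis, hence applies even when the $v_i$ are unbounded and $v$ is non-integrable — there is a convex function $\varphi$ on $\R^d$ with $(\nabla\varphi)_\#\mu=\nu$, and $\nabla\varphi$ is unique $\mu$-a.e. Since $\nabla\varphi$ is defined $\mu$-a.e., $\varphi$ is finite a.e.\ on $\Omega$; being convex, it is then finite and continuous on an open convex set containing $\Omega$, so its restriction to $\Omega$ is a genuine finite convex function. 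I would take this $\varphi$ as the candidate; existence then reduces to reading off the piecewise-affine structure from $(\nabla\varphi)_\#\mu=\nu$. (When in addition $\sum_i m_i|v_i|^2<\infty$, this $\varphi$ is the Brenier potential of quadratic optimal transport from $\mu$ to $\nu$, which is the link to Wasserstein geodesics.)

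The key step is a convex-analysis fact: if $\{x\in\Omega:\nabla\varphi(x)=v\}$ has positive Lebesgue measure, then up to a null set it equals an open convex set on which $\varphi$ is affine with gradient $v$. Indeed $\varphi^*(v)<\infty$ (it is attained at any such $x$), and the closed convex set $C_v:=\{x:v\in\partial\varphi(x)\}=\partial\varphi^*(v)$ contains $\{\nabla\varphi=v\}$ and differs from it only by points of $\Omega$ where $\varphi$ fails to be differentiable, a Lebesgue-null set; hence $\lambda(C_v\cap\Omega)>0$, so $A:=\interior(C_v)\cap\Omega$ is nonempty, $\lambda\big((C_v\cap\Omega)\setminus A\big)=0$ because $\partial C_v$ is null, and on $A$ one has $\varphi(x)=v\cdot x-\varphi^*(v)$, i.e.\ $\varphi$ is affine there with gradient $v$. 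Applying this with $v=v_i$ yields open convex sets $A_i\subseteq\Omega$ with $\nabla\varphi=v_i$ on $A_i$ and $\lambda(A_i)=m_i$ (the last value from $(\nabla\varphi)_\#\mu=\nu$); the $A_i$ are pairwise disjoint since the $v_i$ are distinct and $\varphi$ cannot be affine with two gradients on an overlap. Finally $\sum_i\lambda(A_i)=\sum_i m_i=\lambda(\Omega)$ forces $\lambda\big(\Omega\setminus\bigsqcup_i A_i\big)=0$, and as $\varphi$ is affine on each open $A_i$ it is locally affine a.e.\ on $\Omega$; it is convex by construction. (One sees in passing that the $A_i$ are exactly the connected components of the locally-affine set $A$.) This proves existence.

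For uniqueness, suppose $\varphi$ and $\tilde\varphi$ both satisfy the conclusion. Then each has gradient $v_i$ on a union of its pieces of full measure, so $(\nabla\varphi)_\#\mu=\sum_i m_i\delta_{v_i}=(\nabla\tilde\varphi)_\#\mu$; by the uniqueness clause of McCann's theorem, $\nabla\varphi=\nabla\tilde\varphi$ a.e.\ on $\Omega$, and since $\varphi-\tilde\varphi$ is locally Lipschitz on the connected open set $\Omega$ with a.e.\ vanishing gradient, it is constant. Hence $\varphi$ is unique up to an additive constant.

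The point where care is needed is the interplay between the convex-analysis step and the mass balance $\sum_i m_i=\lambda(\Omega)$: McCann alone gives $\varphi=\sup_i(v_i\cdot x+h_i)$ with open convex pieces $A_i$, but without $\sum_i\lambda(A_i)=\lambda(\Omega)$ the pieces could fail to fill $\Omega$ and $\varphi$ could carry genuine curvature on a positive-measure set — the mass balance is exactly what excludes this. A more constructive route, which also makes explicit the announced connection with Alexandrov's polytope theorem, would instead solve the finite problem first: for sites $v_1,\dots,v_n$ with masses $m_i^{(n)}$ summing to $\lambda(\Omega)$, obtained from the $m_i$ by assigning the tail mass $\sum_{i>n}m_i$ to one site, the existence of heights $h^{(n)}$ making the power-diagram cells have the prescribed volumes is classical — e.g.\ by minimizing the convex functional $h\mapsto\int_\Omega\max_{i\le n}(v_i\cdot x+h_i)\,dx-\sum_{i\le n}m_i^{(n)}h_i$ — and then let $n\to\infty$. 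On that route the main obstacle is a bound on the normalized heights $h_i^{(n)}$ uniform in $n$, so that a diagonal subsequence of the $\varphi_n$ converges locally uniformly and the limiting cells retain their full volume rather than leaking mass; the McCann argument above produces the infinite object in one stroke and avoids this.
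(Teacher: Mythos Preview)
Your proof is correct and follows essentially the same route as the paper: both apply McCann's monotone-transport theorem (no moment hypothesis) to push $\mu=\lambda\mres\Omega$ to $\nu=\sum_i m_i\delta_{v_i}$, then invoke a convex-analysis fact to upgrade the positive-measure level sets $\{\nabla\varphi=v_i\}$ to open convex pieces $A_i$ on which $\varphi$ is affine. The only cosmetic difference is that you phrase the convex-analysis step via $C_{v_i}=\partial\varphi^*(v_i)$ and Fenchel--Young equality, whereas the paper proves directly (its Lemma~2.1) that a convex function with constant gradient on a set is affine on that set's closed convex hull; these are equivalent formulations of the same observation.
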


Theorem~\ref{thm_countable} extends a geometric theorem of  Alexandrov on  unbounded 
convex polytopes~\cite{Alex1950} to the case of a countably infinite number of faces.  
Later in this introduction we will discuss this further. 

As a consequence of Theorem~\ref{thm_countable}, for any given mass-velocity data 
$m_i, v_i$, $i=1,2,\ldots$ as described, there {exists}
a velocity potential $\vp$ that is locally affine a.e.\ and convex and 
induces a partition of $\Omega$ as the data require.  Importantly, 
this map $X_t$ is injective on $A$ for all $t\ge0$, due to a simple lemma:

\begin{lemma}\label{lem:injective}
    Let  $\Omega\subset\R^d$ be open and convex, let 
    $\vp:\Omega\to\R$ be convex, and let $X_t(z)=z+t\nabla\vp(z)$ for all $z\in\Omega$.
    If $\vp$ is differentiable at $x,y\in\Omega$, then
    \begin{equation}
    |X_t(x)-X_t(y)|\ge |x-y| \quad\text{for all $t\ge0$.}
    \end{equation}
\end{lemma}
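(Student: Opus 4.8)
The plan is to reduce everything to the monotonicity of the gradient of a convex function, and then expand a square.

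First I would record the key elementary fact: if $\vp$ is convex and differentiable at both $x$ and $y$, then
\begin{equation}\label{e:mono}
\bigl(\nabla\vp(x)-\nabla\vp(y)\bigr)\cdot(x-y)\ge 0 .
\end{equation}
This follows because differentiability of a convex function at a point forces its gradient there to be a (sub)gradient: $\vp(y)\ge\vp(x)+\nabla\vp(x)\cdot(y-x)$ and $\vp(x)\ge\vp(y)+\nabla\vp(y)\cdot(x-y)$. Adding these two inequalities and rearranging yields \eqref{e:mono}. (Convexity of $\Omega$ is used only to guarantee the segment from $x$ to $y$ lies in $\Omega$, so that these inequalities make sense; alternatively one restricts $\vp$ to that segment.)

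Next I would simply compute. Write $u=x-y$ and $w=\nabla\vp(x)-\nabla\vp(y)$, so that $X_t(x)-X_t(y)=u+tw$. Then
\begin{equation}
|X_t(x)-X_t(y)|^2=|u|^2+2t\,(u\cdot w)+t^2|w|^2 .
\end{equation}
For $t\ge 0$ the middle term is nonnegative by \eqref{e:mono}, and the last term is nonnegative trivially, so $|X_t(x)-X_t(y)|^2\ge|u|^2=|x-y|^2$, which gives the claim upon taking square roots.

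There is really no substantial obstacle here; the only point that deserves a word of care is the justification of \eqref{e:mono} from differentiability alone (rather than from, say, $C^1$ regularity or monotonicity of $\partial\vp$ as a set-valued map), and the observation that $t\ge 0$ is exactly what makes the cross term contribute with the right sign — for $t<0$ the inequality can fail. If one wants, the same computation shows more: $|X_t(x)-X_t(y)|^2\ge |x-y|^2+t^2|\nabla\vp(x)-\nabla\vp(y)|^2$, which immediately yields that $X_t$ is injective on the set of differentiability points of $\vp$, since distinct pieces $A_i$ have distinct velocities $v_i$.
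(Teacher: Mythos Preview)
Your proof is correct and follows essentially the same route as the paper: both establish the monotonicity inequality $(\nabla\vp(x)-\nabla\vp(y))\cdot(x-y)\ge 0$ from the subgradient inequalities and then use it to bound $|X_t(x)-X_t(y)|$ from below. The only cosmetic difference is that you expand $|X_t(x)-X_t(y)|^2$ directly and drop two nonnegative terms, whereas the paper pairs $X_t(x)-X_t(y)$ with $x-y$ and finishes via Cauchy--Schwarz; your version is marginally more direct and even yields the slightly sharper bound you note at the end.
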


It is natural to wonder about a few things at this point. 
First, under what sort of conditions can we ensure that a rigidly breaking velocity field is
a gradient of a convex potential? Second, what is there to say about the
difference between having infinitely many pieces versus finitely many?
And further, is there a sense in which the flows depend continuously on the
mass-velocity data, justifying finite approximation? 
This paper is aimed at addressing these issues.

\smallskip
\noindent
{\bf Conditions for convexity.}
Our motivation for considering the first of these questions stems from 
our work \cite{LPS19} with Dejan Slep\v cev.
Certain results in that paper imply, roughly speaking, that any 
incompressible least-action mass transport flow 
must have initial velocity which is locally constant on an open set of full measure, 
equal to the gradient of  a potential  $\vp$ which is locally affine a.e.\ and {\em semi-convex}.  
Saying $\vp$ is semi-convex is equivalent  to saying that the function
 \begin{equation}\label{d:psit}
 \psi_t(z)=\frac12|z|^2+t\vp(z) 
 \end{equation}
is convex for some $t>0$.  In the immediate context, $\psi_t$ is the potential for
the transport map $X_t= \nabla\psi_t$, and the convexity of $\psi_t$ follows from Brenier's
theorem in optimal transport theory. 
(Below, we assume $\psi_t=+\infty$ outside $\bar\Omega$.)

In the present paper, we work in a somewhat more general situation.
We study flows produced by a.e.-locally affine potentials 
that start from a convex source domain
but need not have least action or even finite action. 
In this situation, a result of McCann~\cite{McCann97}, used to prove uniqueness 
of energy minimizers, directly implies that 
for any potential that is locally affine a.e.,  convexity is equivalent to semi-convexity.
From  \cite[Lemma~3.2]{McCann97} we immediately find the following.

\begin{theorem}\label{thm_semi}
   Let $\Omega\subset\R^d$ be a bounded open convex set, and assume
   $\vp\colon\Omega\to\R$ is locally affine a.e. 
   Then $\vp$ is convex if and only if it is semi-convex. 
\end{theorem}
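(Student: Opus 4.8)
The implication ``$\vp$ convex $\Rightarrow$ $\vp$ semi-convex'' is immediate, since for convex $\vp$ the function $\psi_t=\tfrac12|z|^2+t\vp$ is a sum of two convex functions, hence convex, for every $t>0$. For the converse, the plan is to record that a function which is locally affine a.e.\ has a distributional Hessian that is a matrix-valued measure concentrated on the null set $\Omega\setminus A$ of \eqref{d:Aset} --- that is, \emph{purely singular} with respect to Lebesgue measure $\lambda$ --- and then to observe that the only ``defect'' introduced in passing from $\vp$ to $\psi_t$, namely the absolutely continuous part $\lambda I$, is too regular to compensate for a negative singular part. Hence semi-convexity of $\vp$ already forces its singular Hessian $D^2\vp$ to be nonnegative, which is exactly convexity.

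Concretely, suppose $\vp$ is semi-convex, so $\psi_{t_0}=\tfrac12|z|^2+t_0\vp$ is convex on $\Omega$ for some $t_0>0$. Then $\psi_{t_0}$ is locally Lipschitz and its Hessian $D^2\psi_{t_0}$ is a nonnegative matrix-valued Radon measure, so $D^2\vp=\tfrac1{t_0}(D^2\psi_{t_0}-\lambda I)$ is a signed matrix-valued Radon measure. First I would check that $D^2\vp$ is carried by $\Omega\setminus A$: on the open set $A$, $\vp$ is affine near every point, hence lies in $C^2(A)$ with classical Hessian $0$, so $D^2\vp$ assigns no mass to $A$; since $\lambda(\Omega\setminus A)=0$, this says $D^2\vp\perp\lambda$. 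Now fix $\xi\in\R^d$ and set $\sigma_\xi:=\xi^\top D^2\vp\,\xi$, a scalar signed measure, again singular with respect to $\lambda$. Convexity of $\psi_{t_0}$ gives $\xi^\top D^2\psi_{t_0}\,\xi\ge0$ as measures, i.e.\ $\sigma_\xi+\tfrac{|\xi|^2}{t_0}\lambda\ge0$. Taking singular parts of both sides --- using that the Lebesgue decomposition is linear and carries a nonnegative measure to a pair of nonnegative measures, and that $\sigma_\xi^s=\sigma_\xi$ while $(\tfrac{|\xi|^2}{t_0}\lambda)^s=0$ --- yields $\sigma_\xi\ge0$. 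As this holds for every $\xi$, we get $D^2\vp\ge0$, and since $\vp$ is continuous (being semi-convex) this is precisely convexity of $\vp$.

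I expect the only genuine content --- and presumably what is packaged in \cite[Lemma~3.2]{McCann97} --- is the elementary measure-theoretic fact that a signed measure singular with respect to $\lambda$ which becomes nonnegative after adding a nonnegative absolutely continuous measure must itself have been nonnegative; everything else is bookkeeping about Hessians of convex and of locally affine functions. Two routine points to dispatch along the way: that in either implication $\vp$ is automatically locally Lipschitz, so that the asserted equivalence is genuinely between continuous functions and one may pass freely between ``$D^2\vp\ge0$ as a measure'' and pointwise convexity; and, for readers who prefer to avoid matrix-valued measures, that the argument runs verbatim one line segment at a time, via the one-dimensional fact that a locally affine semi-convex function on an interval has nondecreasing derivative because the atoms of $\vp''$ --- the jumps of its piecewise-constant derivative --- are forced to be nonnegative.
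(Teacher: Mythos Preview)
Your argument is correct. The paper does not supply its own proof of this theorem at all: it simply records that the statement follows immediately from \cite[Lemma~3.2]{McCann97} (and thanks McCann in the acknowledgements for pointing this out). What you have written is, in effect, a self-contained proof of the special case of that lemma needed here --- namely, that for a convex $\psi$ whose Alexandrov Hessian equals $I$ a.e., the function $\psi-\tfrac12|\cdot|^2$ is itself convex --- via the observation that the singular part of the distributional Hessian of a convex function is automatically nonnegative, and the absolutely continuous ``correction'' $\lambda I$ cannot interact with it. So your approach and the paper's are the same in substance; you have simply unpacked the citation.
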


Note that this result holds even without requiring the transport maps $X_t$ determined by
$v=\nabla\vp$ to be injective {\it a priori}. 
We can list three conditions, different from semi-convexity however, 
under which the injectivity suffices to entail the convexity of $\vp$
(and becomes equivalent to it, due to Lemma~\ref{lem:injective}).

\begin{theorem}\label{thm_main}
   Let $\Omega\subset\R^d$ be a bounded open convex set. Let $\vp\colon\Omega\to\R$
   be continuous and locally affine a.e., and define $A$ by \eqref{d:Aset}.
   Further, assume any one of the following:
   \begin{itemize}\setlength{\itemsep}{1pt}
       \item[(i)] The dimension $d=1$.
       \item[(ii)] The number of components of $A$ is finite.
       \item[(iii)] $\vp$ is $C^1$.
   \end{itemize}
   Then $\vp$ is convex if and only if 
   the map $z\mapsto X_t(z)=z+t\nabla\vp(z)$
   is injective on $A$  for all sufficiently small $t>0$. 
\end{theorem}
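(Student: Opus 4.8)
\section*{Proof proposal}

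The plan is to prove the nontrivial direction: assuming $X_t$ is injective on $A$ for all small $t>0$, deduce convexity of $\vp$; the converse is immediate from Lemma~\ref{lem:injective}. By Theorem~\ref{thm_semi} it suffices to show $\vp$ is semi-convex, i.e.\ that $\psi_t(z)=\tfrac12|z|^2+t\vp(z)$ is convex on $\Omega$ for some small $t>0$. Since $\vp$ is affine on each component $A_i$ with $\nabla\vp=v_i$ there, the function $\psi_t$ is smooth and strictly convex on each $A_i$, so the only possible failure of convexity occurs across the ``cracks'' $\partial A_i\cap\Omega$. The key local fact I would isolate is this: $X_t=\nabla\psi_t$ on $A$, and a continuous function whose restriction to a dense open set is locally a gradient of a convex function fails to be convex precisely when its gradient map develops an orientation-reversing overlap. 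So the strategy in each case is to argue that injectivity of $X_t$ on $A$ rules out such overlap.

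\textbf{Case (i), $d=1$.} Here $\vp$ is piecewise affine a.e.\ on an interval, with slopes $v_i$ on intervals $A_i$. Injectivity of $X_t(z)=z+tv(z)$ on $A$ for small $t$ forces, at each interface between a piece of slope $v_i$ to the left and $v_j$ to the right, the inequality $v_i\le v_j$ (otherwise two nearby points on opposite sides of the interface collide, or the images cross, for all $t>0$; one checks the images of the two open intervals, translated by $t v_i$ and $t v_j$, must stay disjoint, which fails if $v_i>v_j$). Combined with continuity of $\vp$, the slopes are nondecreasing left to right, hence $\vp$ is convex. The mild subtlety is handling accumulation of infinitely many pieces at a point, but continuity of $\vp$ plus the slope inequality across every interface still yields monotonicity of the a.e.-defined derivative, which suffices.

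\textbf{Case (ii), finitely many components.} With finitely many pieces $A_1,\dots,A_N$, the ``crack set'' $\Omega\setminus A$ is a finite union of pieces of hyperplanes, a closed null set. Fix $t>0$ small. I would show $\psi_t$ is convex by showing it is locally convex near every point of $\Omega$: near interior points of some $A_i$ this is clear; near a point $x$ on a crack, a neighborhood meets only finitely many $A_i$, on each of which $\psi_t$ is affine-plus-quadratic, and $\psi_t$ is continuous, so $\psi_t$ near $x$ is the restriction of a continuous, piecewise-(strictly convex) function, i.e.\ a function that is convex iff it has no ``concave kink.'' A concave kink across a shared facet between $A_i$ and $A_j$ means $(v_i-v_j)\cdot n$ has the wrong sign relative to the jump geometry; exactly as in $d=1$ this produces, for every small $t$, two points $x'\in A_i$, $y'\in A_j$ with $X_t(x')=X_t(y')$, contradicting injectivity. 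Because there are only finitely many facets, "small enough $t$" can be chosen uniformly, and local convexity everywhere gives global convexity on the convex set $\Omega$. Alternatively, and perhaps more cleanly: $\nabla\psi_t$ maps $A$ into $\R^d$; by Lemma~\ref{lem:injective}-type monotonicity on each piece and injectivity across pieces, $\nabla\psi_t$ is a monotone map on $A$ for small $t$ (the finiteness making the elementary ``two-piece'' analysis exhaustive), and a continuous function on a convex domain whose gradient is monotone on a dense set is convex.

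\textbf{Case (iii), $\vp\in C^1$.} Now $v=\nabla\vp$ is continuous on all of $\Omega$. Fix small $t>0$; then $X_t=\id+t\nabla\vp$ is continuous on $\Omega$ and injective on the full-measure open set $A$. First I would upgrade this to injectivity of $X_t$ on all of $\Omega$: if $X_t(x)=X_t(y)$ with $x\ne y$ in $\Omega$, approximate $x,y$ by points $x_k,y_k\in A$; the images $X_t(x_k),X_t(y_k)$ converge to a common point, and using a degree-theoretic or monotonicity argument (the map $X_t$ is a continuous perturbation of the identity and is locally injective on the dense open set $A$) one derives a contradiction — this is the step I expect to be the main obstacle, since injectivity on a dense set does not automatically pass to the closure for merely continuous maps, and one must genuinely use the gradient structure. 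Once $X_t\colon\Omega\to\R^d$ is a continuous injection that is a.e.\ locally a gradient and equals $\id+t\nabla\vp$, invariance of domain makes it an open map, hence a homeomorphism onto its image; then $\nabla\psi_t=X_t$ is a continuous, injective (hence, being a gradient, cyclically monotone) map, and a $C^1$ function whose gradient is (cyclically) monotone on a convex domain is convex. Therefore $\psi_t$ is convex, $\vp$ is semi-convex, and Theorem~\ref{thm_semi} gives convexity.

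In all three cases the engine is the same — injectivity of the near-identity map $X_t$ across a crack forces the correct sign of the normal jump in $\nabla\vp$, which is exactly convexity — and the three hypotheses are three different ways of making ``across a crack'' a manageable, essentially finite, local statement. The single hardest point is the closure argument in case (iii): promoting injectivity on the dense open set $A$ to injectivity (or at least to the no-overlap conclusion) on all of $\Omega$ for a general continuous gradient field, which is where the $C^1$ hypothesis is genuinely needed and where I would invest the most care.
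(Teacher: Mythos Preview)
Your treatments of cases (i) and (ii) are essentially the paper's approach, though in (ii) you gloss over the points where three or more pieces meet: the ``two-piece'' analysis is not exhaustive there, and the paper handles this by perturbing a line segment to avoid the codimension-two edge set and thereby reducing to finitely many genuine two-piece interfaces along that segment.

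Case (iii) has a real gap. Your argument rests on the implication
\[
\nabla\psi_t\ \text{continuous and injective on }\Omega
\ \Longrightarrow\ \nabla\psi_t\ \text{(cyclically) monotone}
\ \Longrightarrow\ \psi_t\ \text{convex},
\]
but the first arrow is false in general: e.g.\ $\psi(x,y)=\tfrac12(x^2+y^2)+2xy$ has $\nabla\psi(x,y)=(x+2y,\,2x+y)$, a linear bijection of $\R^2$, yet $\psi$ is not convex. Being a gradient and being injective simply do not combine to give monotonicity. You also misidentify the crux: promoting injectivity from $A$ to $\Omega$ is not the obstacle (and would not close the argument even if done), and the ``across a crack'' picture that drives your intuition is unavailable here---when $\vp$ is $C^1$, adjacent pieces with a common boundary point must have the \emph{same} velocity, so the complement of $A$ is Cantor-like rather than made of hyperplane facets, and there are no two-piece interfaces to analyze.

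The paper's route in (iii) is entirely different. It argues by contradiction and uses your own opening reduction in the contrapositive: if $\vp$ is not convex then by Theorem~\ref{thm_semi} it is not semi-convex, so $\psi_t$ is not convex for any $t>0$. Pick $z_0\in A$ with $\psi_t(z_0)>\psi_t^{**}(z_0)$, set $x=X_t(z_0)$, and minimize the Hopf--Lax functional $y\mapsto \tfrac{1}{2t}|x-y|^2+\vp(y)$ over a slightly enlarged compact subdomain. A minimizer $y_1$ exists in the interior (this is where the bounds on $t$ enter), satisfies $X_t(y_1)=x$, and necessarily has $\psi_t(y_1)=\psi_t^{**}(y_1)$, hence $y_1\ne z_0$. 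A small perturbation using continuity of $X_t$ and density of $A$ then produces two distinct points of $A$ with the same image under $X_t$, contradicting injectivity. This variational construction is the missing idea.
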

Under condition (i), the conclusion is easy to establish, of course. 
Condition (ii) and the local representation \eqref{e:affine}
together will imply that adjacent pieces must meet along flat faces where  
both convexity and injectivity reduce to a local monotonicity property for $\nabla\vp$.
For the case of condition (iii) we employ the Hopf-Lax formula  
which formally provides a solution to the initial-value problem
for a Hamilton-Jacobi equation with convex Hamiltonian, namely
\begin{equation}\label{e:HJ1}
    \D_t u + \frac12|\nabla u|^2 = 0, \qquad u(x,0)=\vp(x).
\end{equation}
The maps $X_t$ provide characteristics for this problem. 

Our last condition for convexity of $\vp$ is related to mass transport associated
with the convexification of $\psi_t$.
Below, we let $\psi_t^*$ denote the Legendre transform of $\psi_t$
for $t\ge0$, taking $\psi_t$ to be defined by~\eqref{d:psit} in the convex domain $\bar\Omega$ and
$+\infty$ outside. Then the convexification of $\psi_t$ is the double transform $\psi_t^{**}$.

\begin{theorem}
    \label{thm:monge_ampere}
   Let $\Omega\subset\R^d$ be a bounded open convex set. Let $\vp\colon\bar\Omega\to\R$
   be continuous and locally affine. Then $\vp$ is convex if and only if 
       for some $t>0$, the push-forward of Lebesgue measure
       under the (a.e.-defined) gradient of the convexification of $\psi_t$, written
       \[\mam_t = (\nabla\psi_t^{**})_\sharp \lambda\,,\]
   is absolutely continuous with respect to Lebesgue measure $\lambda$ on $\R^d$.
\end{theorem}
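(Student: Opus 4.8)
The plan is to deduce both implications from Theorem~\ref{thm_semi} (which lets us replace convexity of $\vp$ by convexity of $\psi_t$) together with classical facts about the Legendre transform. For the \emph{easy direction}, suppose $\vp$ is convex. Then $\psi_t=\frac12|\cdot|^2+t\vp$ is convex on $\bar\Omega$, and since $\vp$ is continuous on $\bar\Omega$ the epigraph of $\psi_t$ (with $\psi_t=+\infty$ off $\bar\Omega$) is closed, so $\psi_t$ is convex and lower semicontinuous on $\R^d$ and hence $\psi_t^{**}=\psi_t$. Thus $\nabla\psi_t^{**}=\id+t\nabla\vp$ a.e., which on each component $A_i$ of $A$ is the translation $x\mapsto x+tv_i$; consequently $\mam_t$ is the measure $E\mapsto\sum_i\lambda\bigl(E\cap(A_i+tv_i)\bigr)$, which has density $\sum_i\one_{A_i+tv_i}$ with respect to $\lambda$. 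This density lies in $L^1(\R^d)$ because it integrates to $\sum_i\lambda(A_i)=\lambda(\Omega)$, so $\mam_t\ll\lambda$. (Lemma~\ref{lem:injective} even makes the sets $A_i+tv_i$ pairwise disjoint, but this is not needed here.)

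For the \emph{hard direction}, suppose $\vp$ is \emph{not} convex; by Theorem~\ref{thm_semi} it is not semi-convex, so $\psi_t$ is non-convex for every $t>0$. Fix $t>0$ and write $g=\psi_t^{**}$. Since $g$ is convex and finite on $\Omega$ and $\psi_t$ is continuous there, the non-contact set $U=\{x\in\Omega: g(x)<\psi_t(x)\}$ is open; moreover $U$ is nonempty, for if $U=\emptyset$ then $\psi_t$ equals the convex function $g$ on $\Omega$, hence is convex on $\bar\Omega$, contradicting non-convexity. Thus $\lambda(U)>0$. The key claim is that $\nabla g(x)$ lies in the non-differentiability set $\Sigma$ of the finite convex function $\psi_t^*=g^*$ for a.e.\ $x\in U$. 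Since $\Sigma$ is Lebesgue-null while $\mam_t(\Sigma)=\lambda\bigl((\nabla g)^{-1}(\Sigma)\bigr)\ge\lambda(U)>0$, this shows $\mam_t\not\ll\lambda$; as $t>0$ was arbitrary, the absolute-continuity condition fails.

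To prove the claim I would use that $\psi_t\ge\frac12|\cdot|^2-C$ is coercive and lower semicontinuous, so the biconjugate admits an \emph{attained} Carath\'eodory representation: for each $x$ there are points $y_0,\dots,y_d$ and weights $\mu_k\ge 0$ with $\sum_k\mu_k=1$, $\sum_k\mu_k y_k=x$ and $g(x)=\sum_k\mu_k\psi_t(y_k)$. Discarding the $y_k$ with $\mu_k=0$, the chain $g(x)=\sum_k\mu_k\psi_t(y_k)\ge\sum_k\mu_k g(y_k)\ge g(x)$ is an equality, which forces (as $\psi_t\ge g$) that each $y_k$ is a contact point, and (by the equality case of Jensen's inequality for the convex function $g$) that $g$ is affine on the simplex $\Delta=\co\{y_k\}$ with $x$ in its relative interior; if $x\in U$ then $x$ is none of the $y_k$, so $\dim\Delta\ge 1$. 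Now fix such an $x$ at which $g$ is differentiable — a full-measure subset of $U$ — and put $q=\nabla g(x)$. The subgradient inequality $g(z)\ge g(x)+q\mdot(z-x)$ holds for all $z$, and because $g$ is affine on $\Delta$ with $x$ in the relative interior of $\Delta$ one gets $g(y)=g(x)+q\mdot(y-x)$ and then $q\in\partial g(y)$ for every $y\in\Delta$. Hence $\Delta\subseteq\partial g^*(q)$, so $\partial g^*(q)$ has positive dimension and $\psi_t^*=g^*$ is not differentiable at $q$, i.e.\ $q\in\Sigma$.

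The routine portions are the easy direction and the measure-theoretic bookkeeping; the heart of the matter is the claim, and within it the single genuinely delicate point is the \emph{attained} Carath\'eodory representation of $\psi_t^{**}$ — which is exactly where the coercivity supplied by the $\frac12|\cdot|^2$ term is essential — combined with the observation, which one should state cleanly, that the gradient value associated with any positive-dimensional affine face of a convex function is always a point of non-smoothness of its Legendre dual. A secondary point to check carefully is that $\mam_t$ is the push-forward of $\lambda$ restricted to $\Omega$ and that $g$ is differentiable at a.e.\ point of $U$, so that the identity $\mam_t(\Sigma)=\lambda((\nabla g)^{-1}(\Sigma))$ and the estimate by $\lambda(U)$ are legitimate.
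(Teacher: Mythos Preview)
Your proposal is correct. The easy direction matches the paper's Proposition~\ref{prop:mu_convex} closely. For the hard direction you take a genuinely different route from the paper. The paper introduces the Hopf--Lax formula \eqref{e:HLOmega}, relates its minimizers to the touching set $\Theta_t$ (Lemma~\ref{lem:HLargmin}), and then proves a three-case trichotomy (Proposition~\ref{p:3case}) distinguishing $\intr{\Theta_t}$, $\partial\Theta_t$, and $\Theta_t^c$; from this it extracts a full Lebesgue decomposition of $\mam_t$ (Theorem~\ref{thm:MAt}) with an explicit description of both the absolutely continuous and singular parts. Your argument is leaner: using the attained Carath\'eodory representation of $\psi_t^{**}$ you show directly that every differentiability point of $g$ in the non-contact set $U$ lies on a positive-dimensional affine face of $g$, hence its gradient is a non-smooth point of $g^*$. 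This bypasses the Hopf--Lax machinery and the case analysis entirely. At the core both arguments find a second contact point in $\partial g^*(q)$ --- the paper obtains it as a Hopf--Lax minimizer, you obtain it as a Carath\'eodory vertex --- so the underlying mechanism is the same. What the paper's longer approach buys is the explicit decomposition $\mam_t=\lambda\mres\calB_t+\mam_t\mres\calS_t$ with locally rigid transport on $\calB_t$, which is of independent interest; what yours buys is brevity and an argument whose heart (the affine-face/dual-non-smoothness step) does not actually use the locally-affine structure of $\vp$ --- that hypothesis enters only through the appeal to Theorem~\ref{thm_semi}.
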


The proof of this theorem involves the 
second Hopf formula for solutions of the initial-value problem
for a different Hamilton-Jacobi equation which formally also has characteristics
given by $X_t$. Namely, for 
the following initial-value problem with convex initial data, 
\begin{equation}\label{e:HJ2}
    \D_t w + \vp(\nabla w) = 0, \qquad w(x,0) = \psi_0^*(x),
\end{equation}
with $\vp$ extended continuously to $\R^d$, the Legendre transform
$w=\psi_t^*$ is the unique viscosity 
solution of \eqref{e:HJ2}, 
according to a result of Bardi and Evans~\cite{BardiEvans}.

The push-forward measure  $\mam_t$ in Theorem~\ref{thm:monge_ampere}
is also described as the {\em Monge-Amp\'ere measure} determined by $\psi_t^*$,
as we discuss in Section~\ref{s:concentrations} below.
In space dimension $d=1$, the measure $\mam_t$ reduces to 
a mass measure induced by {\em sticky particle flow}, 
due to results of Brenier and Grenier~\cite{BrenierGrenier1998}.
When the velocity potential is non-convex, the velocity is not monotonically increasing, 
and the sticky particle flow is sure to form mass concentrations.
When the dimension $d>1$, our use of concentrations in $\kappa_t$ to 
characterize non-convexity for locally affine potentials $\vp$ 
is partly motivated by works of Brenier~\etal~\cite{brenier2003}
and Frisch~\etal~\cite{frisch2002reconstruction}
in the scientific literature.
These works describe links between a Monge-Amp\`ere equation, optimal transport, 
and mass density in the ``adhesion model'' in cosmology.  
The adhesion model is used to approximate the formation of mass-concentrating
structures in the universe such as sheets and filaments, 
see e.g.~\cite{weinberg1990largescale,vergassola1994burgers,gurbatov2012large}.

\begin{remark}\label{rem:conjecture}
It seems reasonable to conjecture that Theorem~\ref{thm_main} should
remain valid in general, without assuming {\em any} of the additional conditions (i)--(iii), 
only imposing some mild regularity assumption such as local Lipschitz regularity, perhaps. 
That is, non-convexity of $\vp$ should imply non-injectivity of $X_t$.
We have been unable to prove or disprove such a result. 
Thus it appears interesting to investigate various criteria under which 
injectivity suffices to ensure convexity.
Theorem~\ref{thm:monge_ampere} shows that non-convexity yields 
a measure-theoretic version of non-injectivity, however,
insofar as concentrations form instantaneously in $\mam_t$.
\end{remark}

\smallskip
\noindent
{\bf Incompressible least-action flows with convex source.}
Combined with our results from \cite{LPS19},
Theorems~\ref{thm_countable} and~\ref{thm_semi} provide a classification of 
action-minimizing mass-transport flows that are incompressible 
and transport Lebesgue measure in a given bounded open convex set $\Omega_0$ in $\R^d$
to Lebesgue measure in some other bounded open set. 
A precise description of such flows is provided in Theorem~\ref{thm_optimal} of Section~\ref{s:incompressible} below.
There we will show that they correspond in one-to-one fashion with countable sets 
$\{(m_i,v_i)\}$  of pairs consisting of positive masses $m_i$ and 
distinct velocities $v_i$ bounded in $\R^d$, such that $\sum_i m_i = \lambda(\Omega_0)$.

\bigskip
\noindent
{\bf Infinitely many vs.\ finitely many pieces.}
Characterizing convex and piecewise affine functions by volume and slope data 
relates to a classic geometric problem.
In 1897, Minkowski \cite{Minkowski1989,Alex1950}
proved that any compact convex polytope is uniquely determined, 
up to translation, by the {\em list of face normals and areas},
subject to a natural compatibility condition saying that the integral of 
the unit outward normal field over all faces must vanish.
Alexandrov  solved a version of this problem for unbounded convex polytopes
whose unbounded edges are parallel, and he presented his solution in his 1950 book
{\em Convex Polyhedra} \cite{Alex1950} (see sections 7.3.2 and 6.4.2). 
We quote Alexandrov's result essentially as reformulated by Gu \etal \cite{GuEtal16}
in terms of convex, piecewise affine functions, as follows. 

\begin{theorem}[Alexandrov]\label{thmAlex}
Let $\Omega$ be a compact convex polytope with nonempty interior in $\R^d$,
let $v_1,\ldots,v_k\in\R^d$ be distinct and let $m_1,\ldots,m_k>0$ so that
$\sum_{i=1}^k m_i = \lambda(\Omega)$. Then there is
convex, piecewise affine function $\varphi$ on $\Omega$ (unique up to adding a constant) 
so $\nabla\varphi = v_i$ on a convex set $A_i$ with volume $\lambda(A_i)=m_i$.
\end{theorem}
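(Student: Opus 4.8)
The plan is to deduce the theorem from the standard variational construction of power diagrams (Laguerre cells). For a weight vector $h=(h_1,\dots,h_k)\in\R^k$ set
\[
  \vp_h(x)=\max_{1\le i\le k}(v_i\mdot x+h_i),
\]
which is automatically convex and piecewise affine on $\R^d$, and put
\[
  A_i(h)=\{x\in\Omega:\ v_i\mdot x+h_i>v_j\mdot x+h_j\ \text{for all }j\ne i\},\qquad w_i(h)=\lambda(A_i(h)).
\]
Each $A_i(h)$ is convex, being the intersection of the convex polytope $\Omega$ with open half-spaces; the $A_i(h)$ are pairwise disjoint; and since the tie sets $\{x:(v_i-v_j)\mdot x=h_j-h_i\}$ are hyperplanes — here the distinctness of the $v_i$ is used — their union has full measure in $\Omega$, so $\sum_i w_i(h)=\lambda(\Omega)$. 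It therefore suffices to produce $h^*$ with $w_i(h^*)=m_i$ for every $i$: then $\vp:=\vp_{h^*}$ (restricted to $\Omega$) has $\nabla\vp=v_i$ exactly on the convex set $A_i(h^*)$, which has volume $m_i>0$, so all $k$ pieces are genuinely present and $\vp$ is piecewise affine with these pieces.

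To find $h^*$, introduce the functional
\[
  F(h)=\sum_i m_i h_i-\int_\Omega\vp_h(x)\,dx.
\]
Since $h\mapsto\vp_h(x)$ is convex for each fixed $x$ and $\Omega$ is bounded, the integral term is finite, continuous and convex, so $F$ is concave and continuous. Examining one-sided difference quotients and using again that tie sets are null, one finds that $\int_\Omega\vp_h\,dx$ is differentiable with $\D_{h_i}\int_\Omega\vp_h\,dx=w_i(h)$, a continuous function of $h$; hence $F\in C^1$ and $\D_{h_i}F(h)=m_i-w_i(h)$. Because $\sum_i m_i=\lambda(\Omega)$, $F$ is invariant under $h\mapsto h+c\mathbf 1$, so it is natural to maximize $F$ over the hyperplane $H=\{h:\sum_i h_i=0\}$. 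If the maximum over $H$ is attained at $h^*$, the constrained first-order condition says $m_i-w_i(h^*)$ is the same for all $i$; summing and using $\sum_i(m_i-w_i(h^*))=0$ forces $w_i(h^*)=m_i$ for every $i$, as wanted.

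The heart of the matter is the coercivity of $F$ on $H$, and I expect this to be the only real obstacle, since it is exactly where the hypotheses $m_i>0$ for all $i$ and the boundedness of $\Omega$ get used. I would show $F(h^0+su)\to-\infty$ as $s\to+\infty$ for every fixed $h^0\in H$ and every $u\in H\setminus\{0\}$. Let $U=\max_i u_i$; then $U>0$, and $S:=\{i:u_i=U\}$ is a proper subset of $\{1,\dots,k\}$ since $\sum_i u_i=0$. For $x$ in the bounded set $\Omega$, the quantity $\max_i(v_i\mdot x+h^0_i+su_i)-sU=\max_i\big(v_i\mdot x+h^0_i-s(U-u_i)\big)$ is non-increasing in $s$ and converges to $\max_{i\in S}(v_i\mdot x+h^0_i)$, so by monotone convergence
\[
  \int_\Omega\vp_{h^0+su}\,dx=sU\lambda(\Omega)+\int_\Omega\max_{i\in S}(v_i\mdot x+h^0_i)\,dx+o(1).
\]
Hence $F(h^0+su)=\big(\sum_i m_i u_i-U\lambda(\Omega)\big)s+O(1)=\big(\sum_{i\notin S}m_i(u_i-U)\big)s+O(1)$, and the bracket is strictly negative because $m_i>0$ and $u_i-U<0$ for every $i\notin S$. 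So $F\to-\infty$ along every ray; being concave, continuous and coercive on $H\cong\R^{k-1}$, $F$ attains its maximum, yielding $h^*$ and thereby existence.

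For uniqueness, note first that any convex piecewise affine function with gradient values $\{v_1,\dots,v_k\}$ is the maximum of its affine pieces, hence equals $\vp_h$ for a suitable $h$, with the piece-$i$ region equal to $A_i(h)$, so the volume hypothesis reads $w_i(h)=m_i$. Suppose $\vp_h$ and $\vp_{\tilde h}$ both solve; both maximize the concave $F$ on $H$, so $F$, and hence $G(h):=\int_\Omega\vp_h\,dx$, is affine along the segment $[h,\tilde h]$. An integral of convex functions is affine on an interval only if, for a.e.\ $x$, the integrand $t\mapsto\max_i\big(v_i\mdot x+(1-t)h_i+t\tilde h_i\big)$ is affine on $[0,1]$; and a maximum of finitely many affine functions of $t$ is affine on $[0,1]$ only if a single index $i(x)$ attains the maximum throughout, hence wins both the $h$-diagram (at $t=0$) and the $\tilde h$-diagram (at $t=1$). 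With $B_i:=\{x:i(x)=i\}$ one gets $B_i\subseteq\overline{A_i(h)}\cap\overline{A_i(\tilde h)}$ and $\sum_i\lambda(B_i)=\lambda(\Omega)=\sum_i m_i$, while $\lambda(B_i)\le m_i$; equality throughout forces $A_i(h)=A_i(\tilde h)$ up to null sets for every $i$. On $A_i(h)$ one has $\vp_h-\vp_{\tilde h}=h_i-\tilde h_i$, and since $\vp_h-\vp_{\tilde h}$ is continuous on the connected set $\Omega$, locally constant on the dense open set $\bigsqcup_i A_i(h)$, and the adjacency graph of the finitely many cells is connected (any splitting of the indices would disconnect $\Omega$), all the constants $h_i-\tilde h_i$ coincide; thus $\vp_h=\vp_{\tilde h}+\text{const}$. (A posteriori, Theorem~\ref{thmAlex} is also the finite special case of Theorem~\ref{thm_countable} applied to the open polytope $\Omega^\circ$, the piecewise-affine structure coming for free from having only finitely many convex pieces of total volume $\lambda(\Omega)$.)
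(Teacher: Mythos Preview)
The paper does not actually prove Theorem~\ref{thmAlex}; it is quoted as Alexandrov's classical result (reformulated by Gu~\etal), and the paper only remarks that Gu~\etal\ gave a proof ``essentially equivalent here to minimizing $\int_\Omega \varphi\, d\lambda$ as a function of the constants $h_i$ \ldots\ subject to the given volume constraints on $A_i$.'' Your variational argument via the concave functional $F(h)=\sum_i m_i h_i-\int_\Omega\vp_h$ is precisely the dual form of that minimization, so your approach coincides with the one the paper attributes to Gu~\etal. The paper's own contribution is the countable generalization (Theorem~\ref{thm_countable}), proved instead through McCann's optimal-transport theorem, and you correctly observe at the end that Theorem~\ref{thmAlex} is recovered as the finite special case of that.

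Your proof is correct in outline and essentially complete. Two small points are worth tightening. First, in the coercivity step you establish $F(h^0+su)\to-\infty$ along every ray in $H$ and then assert $F$ attains its maximum; this needs the (standard) observation that for a concave continuous function the superlevel sets are closed and convex, and a closed convex unbounded set in a finite-dimensional space contains a ray, which would contradict ray-wise decay. Second, in the uniqueness step, the assertion that the maximum of finitely many affine functions of $t$ is affine on $[0,1]$ only if ``a single index $i(x)$ attains the maximum throughout'' deserves one more line: since each $L_i-L$ is affine, nonpositive, and the max equals $L$ at $t=\tfrac12$, any index achieving the max at $t=\tfrac12$ must satisfy $L_i\equiv L$, giving the needed index. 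Neither point is a genuine gap.
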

Alexandrov's unbounded polyhedra correspond to the supergraph
sets\[
\{(z,y)\in\R^d\times\R: z\in\Omega,\ y\ge \varphi(z)\},
\]
whose unbounded edges are parallel to the last coordinate axis.

We remark that in \cite{GuEtal16}, Gu \etal provided an elementary self-contained proof 
for a generalization of Theorem~\ref{thmAlex}, essentially equivalent here to minimizing
$\int_\Omega \varphi\, d\lambda$ as a function of the constants $h_i$ in the representation
\eqref{e:affine}
subject to the given volume constraints on $A_i$.
This is a variant of Minkowski's original proof (presented in \cite[sec. 7.2]{Alex1950})
of the existence of bounded polyhedra with prescribed face areas and normals
through a constrained maximization of volume. 
But this technique does not appear to work in the countably infinite case
of Theorem~\ref{thm_countable}.

In the case of finitely many pieces, in addition to
the conclusions stated in Alexandrov's theorem it is known that:
\begin{itemize}
\item[(i)] The velocity field $v=\nabla\varphi$ is discontinuous on $\Omega$ if $1<k<\infty$.
\item[(ii)] Each piece $A_i$ is the interior of a convex polytope.
\end{itemize}
Of course, property (i) is trivial since $\Omega$ is connected.
Property (ii) is due to the affineness from \eqref{e:affine} 
and the convexity of $\varphi$, which imply
$\varphi(z)\ge v_i\mdot z+h_i$ for all $z\in\Omega$. 
It follows $z\in A_i$ if and only if $z\in\Omega$ and 
\begin{equation}
v_i\mdot z + h_i > v_j\mdot z + h_j \quad\mbox{for all $j\ne i$}. 
\end{equation}
Equality is not possible since the $v_i$ are distinct and $A_i$ is open.
By consequence $A_i$ is the intersection of a finite number of half-spaces, i.e., a polytope.

In the case of infinitely many pieces, 
it turns out that neither (i) nor (ii) is necessarily true. 
A rigidly breaking velocity field can be continuous on $\Omega$, 
and a piece (shard) may assume any convex shape.
As the reader may suspect, examples involve fractal structure.
We will explore constructions involving 
Cantor sets, Vitali coverings, and Apollonian gaskets. 
Figure~\ref{f:spray} illustrates the latter: The shaded circles
indicate the sets $A_i+tx_i$, where the $A_i$ are Apollonian disks 
in the unit circle $\Omega$, $x_i$ is the center of $A_i$, and $t=0.5$.
See Section~\ref{ss:full_packing} for details.

Actually, continuity of the velocity is a highly paradoxical property, 
since it immediately implies that the flow images $X_t(\Omega)$ are connected, 
so seemingly not ``broken'' at all!  As we will show,
this phenomenon generates fat Cantor sets 
by ``expanding'' the standard Cantor set in a simple way.

\begin{figure}[!t]
\noindent\makebox[\textwidth]{%
\includegraphics[height=4.6in]{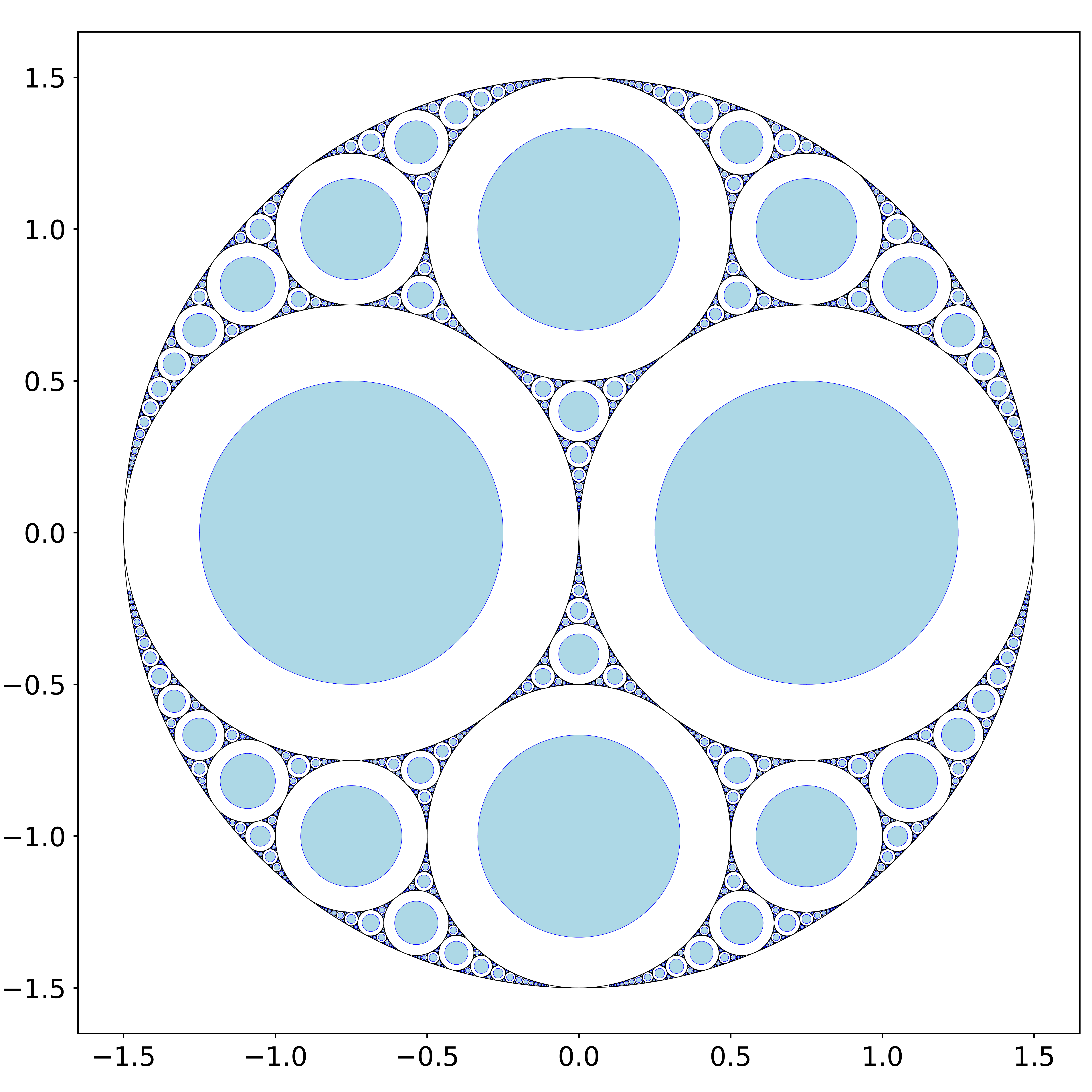}}
\caption{Breaking of an Apollonian gasket at $t=0.5$}  \label{f:spray} 
\end{figure}
\medskip
\noindent

{\bf Plan of the paper.}
Following this introduction, we first provide the proof of Theorem~\ref{thm_countable}
and Lemma~\ref{lem:injective} in Section~\ref{s:alex}.
In Section~\ref{s:1d} we study and classify rigidly breaking flows in the case of 
one space dimension, $d=1$. There we also discuss a paradoxical example 
with rigidly breaking but continuous velocity given by the Cantor function.

We complete the proof of Theorem~\ref{thm_main} 
in Sections~\ref{s:finite} and~\ref{s:C1}.
We handle case (ii)  in Section~\ref{s:finite}, where
we assume the flow rigidly breaks the convex domain into finitely many pieces. 
The case (iii), with $C^1$ potential, is handled in Section~\ref{s:C1},
making use of the Hopf-Lax formula for the solution of the Hamilton-Jacobi
equation~\eqref{e:HJ1}. 

We carry out the proof of Theorem~\ref{thm:monge_ampere} in Section~\ref{s:concentrations}.
In particular, in case
$\vp:\bar\Omega\to\R$ is continuous, locally affine a.e. and {\em non-convex},
Theorem~\ref{thm:MAt} shows that the \MA\ measure $\mam_t$ in Theorem~\ref{thm_main}
has a Lebesgue decomposition with a non-trivial singular part.

We next investigate the stability of rigidly breaking flows with respect to the 
mass-velocity data, in Section~\ref{s:stability}. There we show that weak-star convergence of
transported Lebesgue measure follows from weak-star convergence of pure point measures
naturally associated with the mass-velocity data.

In Section~\ref{s:incompressible} we complete our treatment of
incompressible least-action flows with convex source from \cite{LPS19},
establishing in Theorem~\ref{thm_optimal} that these flows are characterized
uniquely by their mass-velocity data $\{(m_i,v_i)\}$.

We study the possible shapes that the convex ``pieces'' $A_i$ may take
in Section~\ref{s:shape}.  In particular, we show that all the $A_i$ may be round
balls, corresponding to a full packing of $\Omega$ 
(e.g., any Apollonian or osculatory packing), and we show
that an individual component $A_i$ can assume any convex shape.

The paper concludes with a discussion that addresses three points.
We discuss how the continuity assumption on the potential $\vp$ in 
Theorem~\ref{thm_main} is ensured by the absence of shear (i.e., symmetry of
the distributional gradient $\nabla v$) and a local integrability condition.
We complete our Cantor-function example in Section~\ref{s:1d} showing
how fat Cantor sets are produced in a {uniformly} expanded way.
Finally, although we lack any characterization of rigidly breaking velocity fields
that are continuous when the dimension $d>1$, we discuss some constraints on such fields.

\section{Proof of a countable Alexandrov theorem}\label{s:alex}

Here we provide the proofs of Theorem~\ref{thm_countable} and Lemma~\ref{lem:injective}.
We prove Theorem~\ref{thm_countable} by a straightforward application
of a theorem of McCann~\cite{McCann95} which improved Brenier's theorem in optimal transport theory.

\begin{proof}[Proof of Theorem~\ref{thm_countable}]
Let the measure $\mu$ be given by $\lambda\mres\Omega$, Lebesgue measure restricted to 
the bounded convex open set $\Omega$, 
and let the measure
$\nu$ given as a combination of Dirac delta masses concentrated at the distinct points $v_i$, 
\begin{equation}\label{e:nu}
\nu = \sum_i m_i\delta_{v_i} , \quad\text{where $\sum_i m_i = \lambda(\Omega).$}
\end{equation}
With no moment assumptions, the main theorem in \cite{McCann95} produces a convex function
$\varphi$ on $\R^d$ whose gradient $T=\nabla\varphi$ is determined uniquely
a.e.\ in $\Omega$ and pushes $\mu$ forward to $\nu$.
The push-forward property $T_\sharp\mu = \nu$ has the consequence that 
the pre-image $\hat A_i$
of $\{v_i\}$ under the (a.e.-defined) gradient of $\vp$
is a Borel set $\hat A_i\subset\Omega$ with $\lambda(\hat A_i)=m_i$
and  $\nabla\varphi=v_i$ on $\hat A_i$.
Because $\Omega$ is connected, this determines $\vp$ up to a constant.

Since $\varphi$ is convex it is not difficult to deduce that $\varphi$ is affine 
on the closure of the convex hull of $\hat A_i$; 
see the lemma below.
Thus since $\lambda(\hat A_i)>0$, the closed convex hull has convex interior 
$A_i\subset \hat A_i\subset \bar A_i$ which is convex and has the same measure 
$\lambda(A_i)=\lambda(\bar A_i)=m_i$.
\end{proof}

\begin{lemma}\label{l:affine}
Assume $\Omega\subset\R^d$ is an open convex set and $f\colon\Omega\to\R$
is convex. 
(i) If $f$ is differentiable at points $x,y\in\Omega$ with $\nabla f(x)=\nabla f(y)$ 
then $ f$ is affine on the line segment connecting $x$ and $y$.
(ii) If $\nabla f$ is constant on a set $B$, then $ f$ is affine on 
the closed convex hull of $B$ in $\Omega$.
\end{lemma}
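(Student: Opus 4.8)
The plan is to first prove (i) and then deduce (ii) from it. For part (i), fix $x,y\in\Omega$ with $\nabla f(x)=\nabla f(y)=:v$. After subtracting the affine function $z\mapsto v\cdot z$ (which preserves convexity and the hypothesis), I may assume $v=0$, so that $x$ and $y$ are both global minima of $f$ over $\Omega$ with a common minimum value; indeed convexity plus differentiability at $x$ gives $f(z)\ge f(x)+\nabla f(x)\cdot(z-x)=f(x)$ for all $z\in\Omega$, and likewise $f(z)\ge f(y)$, forcing $f(x)=f(y)=:c=\min f$. Now for any point $z=(1-s)x+sy$ on the segment, convexity gives $f(z)\le(1-s)f(x)+sf(y)=c$, while the minimality gives $f(z)\ge c$; hence $f\equiv c$ on the segment, which is the assertion (the segment lies in $\Omega$ by convexity of $\Omega$).

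For part (ii), suppose $\nabla f\equiv v$ on $B$. By the same reduction I may take $v=0$, so every point of $B$ is a global minimizer of $f$ with the common value $c=\min_\Omega f$. Let $K$ denote the closed convex hull of $B$ taken within $\Omega$ (i.e.\ $K=\overline{\co B}\cap\Omega$, or simply $\co B$ when that is already closed in $\Omega$). First I show $f\equiv c$ on $\co B$: any point of $\co B$ is a finite convex combination $\sum_j s_j b_j$ of points $b_j\in B$, and Jensen's inequality gives $f(\sum_j s_j b_j)\le\sum_j s_j f(b_j)=c$, while minimality gives $\ge c$. Then I extend to the closure by continuity: $f$, being convex and finite on the open set $\Omega$, is continuous there, so $f\equiv c$ on $\overline{\co B}\cap\Omega$ as well. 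Translating back, $f$ is affine (equal to $z\mapsto v\cdot z+(c')$ for the appropriate constant) on the closed convex hull of $B$ in $\Omega$.

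The only genuinely delicate point is the bookkeeping about \emph{which} closure is meant in (ii): the closed convex hull ``in $\Omega$'' must be interpreted so that it stays inside $\Omega$, since $f$ need not extend past $\partial\Omega$; this is exactly how the lemma is used in the proof of Theorem~\ref{thm_countable}, where one passes to the convex \emph{interior} $A_i$ of the hull, which is automatically contained in $\Omega$. Everything else is a one-line application of convexity (the supporting-hyperplane inequality) and Jensen's inequality, so I expect no real obstacle; the reduction ``subtract the linear part, so the common gradient is $0$ and the relevant points become minimizers'' is the organizing idea that makes both parts transparent.
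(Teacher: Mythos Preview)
Your proof is correct. The organizing idea---subtracting the common linear part so that the hypothesis becomes ``every point of $B$ is a global minimizer''---is a genuinely different packaging from the paper's argument, and it streamlines both parts into the single observation that the minimizer set of a convex function is convex (then closed by continuity). The paper instead treats (i) by restricting $f$ to the segment and noting that the one-variable function $g(\tau)=f(x+\tau(y-x))$ is convex with $g'(0)=g'(1)$, hence affine; for (ii) it keeps the general $v$, invokes Carath\'eodory to write a point of the hull as a finite combination $\sum t_jy_j$, and sandwiches $f$ between $\sum t_jf(y_j)$ (Jensen) and $f(y_1)+\nabla f(y_1)\cdot(x-y_1)$ (supporting hyperplane at $y_1$), using the identity $f(y_j)-f(y_1)=\nabla f(y_1)\cdot(y_j-y_1)$ from part (i). Your reduction avoids the explicit Carath\'eodory step and the inequality chain; the paper's route, on the other hand, records the useful identity \eqref{e:fdif} along the way. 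Both are short and elementary; neither has any real advantage beyond taste.

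Your closing remark about the closure being taken in $\Omega$ is apt and matches how the lemma is actually applied.
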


\begin{proof}
To prove (i), restrict $f$ to the line segment connecting $x$ to $y$,
defining $g(\tau)=f(x+\tau(y-x))$.
Then $g$ is 
differentiable at $\tau=0$ and $1$, with 
\[
g'(0)=
\nabla f(x)\mdot(y-x)=
\nabla f(y)\mdot(y-x)=
g'(1).
\]
Then $g$ is affine since it is convex. This proves (i), and we further note that
\begin{equation}\label{e:fdif}
f(x) - f(y) = \nabla f(y)\mdot (x-y).
\end{equation}
To prove (ii), by continuity it suffices to show $f$ is affine
on the convex hull of $B$. By Carath\'eodory's theorem on convex functions, 
each point in the convex hull is a convex combination of at most $d+1$ points
in $B$. Consider a convex combination $x = \sum_{j=1}^k t_jy_j$ with $y_j\in B$
and $t_j\ge0$ for all $j$ and $\sum t_j=1$. Invoking convexity and using 
\eqref{e:fdif}, we find that 
\begin{align*}
    \sum_{j=1}^k t_j f(y_j) &\ge f(x) \ge f(y_1)+\nabla f(y_1)\mdot(x-y_1)
    \\ & = \sum_{j=1}^k t_j \Bigl( f(y_1) + \nabla f(y_1)\mdot(y_j-y_1) \Bigr)
    = \sum_{j=1}^k t_j f(y_j).
\end{align*}
Hence $f$ is affine on the closed convex hull of $B$.
\end{proof}

\begin{remark}\label{r:pp}
Evidently, any arbitrary pure point measure $\nu$ on $\R^d$ having 
  total mass $\nu(\R^d)=\lambda(\Omega)$ can be expressed in the form \eqref{e:nu}
  for countable mass-velocity data that satisfy the assumptions of 
  Theorem~\ref{thm_countable}.  Reordering the data yield the same measure,
  hence there is a bijection between countable sets $\{(m_i,v_i)\}$ of such
  mass-velocity data and such pure point measures.
  McCann's main theorem from \cite{McCann95} associates a convex potential
  with any Radon measure $\nu$ on $\R^d$ having $\nu(\R^d)=\lambda(\Omega)$.
  The association of mass-velocity data with potentials in Theorem~1
  is obtained by restricting this to pure point measures.
  \end{remark}

  \begin{remark}
      In Section~\ref{s:stability} we will prove a stability (or continuity) theorem 
      for the flows $X_t=\id + t\nabla\vp$ determined by mass-velocity data as in the proof
      of Theorem~\ref{thm_countable} above. In Theorem~\ref{thm_stability} we show that
      for any sequence of pure point measures $\nu_n$ defined as in \eqref{e:nu},
      weak-star convergence of $\nu_n$ implies weak-star convergence of Lebesgue measure
      restricted to the transported sets $X^n_t(A^n)$ where $A^n$ is  the 
      open set defined as in \eqref{d:Aset} on  which $\vp_n$ is locally affine.
  \end{remark}

\begin{proof}[Proof of Lemma~\ref{lem:injective}]
  Let $\Omega\subset\R^d$ be open and convex, let $\vp\colon\Omega\to\R$ be convex,
  define $X_t(z) = z + t\nabla\vp(z)$ for $z\in\Omega$, and suppose $\vp$ is differentiable
  at two points $x,y\in\Omega$.  Convexity implies the graph of $\vp$ lies above 
  the tangent planes at $x$ and $y$, hence the well-known 
monotonicity condition follows:
  \begin{equation}\label{e:vp_monotone}
     (\nabla\vp(x)-\nabla\vp(y))\mdot(x-y)\ge 0\,. 
  \end{equation}
Thence
\[
\ip{X_t(x)-X_t(y)}{x-y} = |x-y|^2 + 
t\ip{\nabla\varphi(x)-\nabla\varphi(y)}{x-y} \ge |x-y|^2 ,
\]
and we infer $|X_t(x)-X_t(y)|\ge|x-y|$ by the Cauchy-Schwarz inequality.
\end{proof}

%s------------------------------------------
\section{One space dimension}\label{s:1d}

In order to develop understanding of 
rigidly breaking flows with a countably infinite number of components,
we consider the case of one space dimension.
We provide the easy proof of Theorem~\ref{thm_main} for this case,
and we illustrate and characterize the paradoxical possibility that a 
rigidly breaking velocity field may be continuous.

\subsection{Convexity in 1D}
\begin{proof}[Proof of Theorem~\ref{thm_main}(i)]
Make the assumptions of the theorem, including that (i) the dimension $d=1$. 
By Lemma~\ref{lem:injective} we know convexity of $\vp$ implies injectivity of $X_t$ on $A$ for all $t>0$.
Supposing that $X_t$ is injective on $A$ for all small enough $t>0$, 
we claim $\nabla\vp$ is necessarily increasing on $A$.
Each of the countably many components $A_i$ of the open set $A$ is an open interval.
Let $v_i$ be the constant value of $\nabla\vp$ on $A_i$.
The images $X_t(A_i)=A_i+tv_i$ then remain disjoint and preserve their initial order for all small $t>0$.
Let $A_i$, $A_j$ be any two component intervals of $A$ and assume
$A_i<A_j$, meaning $x<y$ whenever $x\in A_i$ and $y\in A_j$.
If $A_i$ and $A_j$ are adjacent, then clearly $v_i\le v_j$.  
If they are not adjacent, then the union of all intervals $A_k+tv_k$ with $A_i<A_k<A_j$ 
preserves its initial Lebesgue measure, 
hence the interval between $A_i+tv_i$ and $A_j+tv_j$ cannot shrink,
and so $v_i\le v_j$.
It follows $\vp$ is convex.  
\end{proof}

\subsection{Example: ``Cantor's elastic band''}\label{ss:cantor}

Take $\Omega=(0,1)\subset\R$, and consider the velocity field given by $v=c$ in $\Omega$,
where $c\colon[0,1]\to[0,1]$ is the standard {\em Cantor function}. 
The function $c$ is increasing yet continuous
on $[0,1]$ with $c(0)=0$ and $c(1)=1$,
and $c$ is locally constant on the open set $A=(0,1)\setminus\calC$, 
where $\calC$ denotes the standard Cantor set. 

For each component interval $A_i$ of $A$, let $v_i$ denote the value of $c$ on $A_i$.  
Then the flow in \eqref{d:Xt} is given by rigid transport in $A_i$, with
\[
X_t(z) = z + t v_i \,,  \quad z\in A_i. 
\]
Note that the distance between $X_t(A_i)$ and $X_t(A_j)$ increases linearly with $t$,
since $v_i<v_j$ for $A_i<A_j$. Thus $v$ rigidly breaks $\Omega$ into the $A_i$, according 
to our definition at the beginning of the introduction.

Indeed, the velocity potential $\varphi(z)=\int_0^z c(r)\,dr$ is convex and locally affine a.e. 
Yet $v=\nabla\vp$ is continuous. 
This seems paradoxical, for it implies the image $X_t(\Omega)$ remains {\em connected}
under the flow of the ``rigidly breaking'' velocity field $v$, 
and must comprise the full interval $(0,1+t)$!

Evidently, the injective maps $X_t$ ``stretch'' the interval $[0,1]$ to cover the longer interval $[0,1+t]$
by countably many rigidly translated images $X_t(A_i)$ together with the image of the Cantor set
$X_t(\calC)$.  
The union of the rigid images is the set $X_t(A)$, 
which is open and dense in $(0,1+t)$.
Of course the Lebesgue measure $\lambda((0,1+t))=1+t$, yet evidently 
\[
\lambda(X_t(A)) = \sum_i \lambda(X_t(A_i)) = \sum_i\lambda(A_i) = \lambda(A)=1 .
\]
What we infer from this is that the image $\calC_t:=X_t(\calC)$ is a {\em fat Cantor set}.
It is closed and nowhere dense in $(0,1+t)$, and has Lebesgue measure $\lambda(\calC_t)=t$.
The map $X_t$ has ``stretched'' the Cantor set $\calC$ with Lebesgue measure zero to a set with 
positive Lebesgue measure. 

In terms of physical intuition, we might fancifully imagine $\calC$ as consisting
of an ephemeral kind of matter having zero mass and always nowhere dense,
but infinitely stretchable so it can cover a set of positive Lebesgue measure.
The body $\Omega=(0,1)$ might be considered to model an elastic band 
made of a mixture of such stretchy stuff and ordinary rigid matter.
In this interpretation, deforming $\Omega$ to $X_t(\Omega)$ 
stretches the band but it does not disconnect it.

Less fancifully,  we wish to describe what is ``broken'' in a mathematically natural way.
For this we can focus on matter that has positive mass density. 
The rigid translation of the connected open pieces $A_i$ induces a mass measure 
$\nu_t$ on the image domain $X_t(\Omega)$ that is 
{\em not} the restriction of Lebesgue measure to $X_t(\Omega)$.
Instead, $\nu_t$ is the restriction of Lebesgue measure to the 
disconnected open (yet dense) set $X_t(A)= \bigsqcup_i X_t(A_i)$. 
We can say the body $\Omega$ is broken into the disconnected components $X_t(A_i)$
that carry all the mass.
This induced mass measure $\nu_t$ is nothing but the push-forward 
under $X_t$ of $\lambda\mres\Omega$, Lebesgue measure restricted to $\Omega$.
We have  $(X_t)_\sharp(\lambda\mres\Omega) = \lambda\mres X_t(A)$
in the present example,  and this {\em differs} from $\lambda\mres X_t(\Omega)$.
While one can make different choices of the set $A$ with this property,
it seems natural to take $A$ to be the 
open set in \eqref{d:Aset} on which the
velocity potential is locally affine. 

In Fig.~\ref{f:cantor} we illustrate this example by plotting the 
velocity $v=c$ as a function of transported position $x=X_t(z)=z+tc(z)$.
The transported pieces $X_t(A_i)$ are (non-singleton) level sets of
the transported velocity $v=f(x,t)$,
which is constant along the flow lines $x=z+tc(z)$. 
As a side remark, it is interesting to note that 
while the partial derivative 
$\D f/\D x = 0$ in every translated component $X_t(A_i)$,
it turns out that $\D f/\D x = 1/t$ a.e.~in the 
fat Cantor set $\calC_t$, meaning these sets expand uniformly in time. 
We defer proof to the Discussion below, see Proposition~\ref{prop:Lax}.

\begin{figure}\begin{center}
\includegraphics[width=4.2in]
{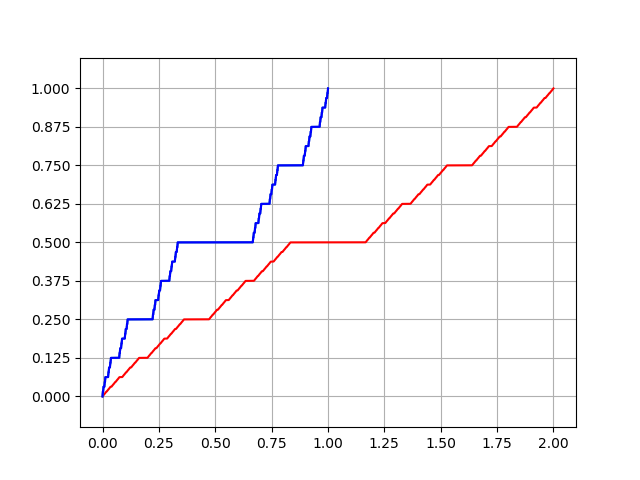}
\put(-310, 165){\Large $v$}
\put(-20, 35){\Large $x$}
\caption{Cantor expansion wave: 
$v=c(z)$ {\it vs.}~$x=z+tc(z)$ at $t=0$ and $1$.}\label{f:cantor}
\end{center}
\end{figure}

\subsection{Characterization of continuity in one dimension}

The Cantor-function example generalizes to provide necessary and sufficient
conditions for a rigidly breaking velocity field to be continuous
when $d=1$.
Recall that by Theorem~\ref{thm_main}(i), such a velocity field must
be the derivative of a $C^1$ potential $\varphi$ 
that is convex and locally affine a.e.

\begin{prop}\label{p:cacx1d}
Let $\Omega\subset\R$ be a bounded open interval, and let $\varphi$
be convex and  locally affine a.e.~on $\Omega$, with $\varphi'$ 
taking the distinct values $\{v_i\}$ on an open set of full measure in $\Omega$.
Then $\varphi$ is $C^1$ if and only if the sequence $\{v_i\}$ is dense in an interval.
\end{prop}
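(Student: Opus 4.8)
The plan is to work directly with the one-dimensional structure: $\Omega$ is an interval, $A$ is an open dense subset whose complement $\Omega\setminus A$ has Lebesgue measure zero, and on each component interval $A_i$ the derivative $\varphi'$ is the constant $v_i$. Since $\varphi$ is convex and locally affine a.e., its derivative $\varphi'$ is a nondecreasing function that is defined and continuous on $A$, with $\varphi'\equiv v_i$ on $A_i$. The key reformulation is that $\varphi$ is $C^1$ if and only if $\varphi'$ extends to a continuous (hence still nondecreasing) function on all of $\Omega$; and because $\Omega\setminus A$ is closed with empty interior, such an extension is automatic once we know that $\varphi'$ has no jump across any point of $\Omega\setminus A$. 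So the statement reduces to: the monotone function $\varphi'$ (defined on the dense open set $A$, with value set $\{v_i\}$) is jump-free if and only if $\{v_i\}$ is dense in an interval.

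For the forward direction, suppose $\varphi$ is $C^1$. Then $\varphi'$ is continuous and nondecreasing on the interval $\Omega$, so its range $\varphi'(\Omega)$ is an interval $J$ (possibly degenerate, but if $\Omega$ has more than one component $A_i$ the $v_i$ are distinct so $J$ is nondegenerate). The values $\{v_i\}=\varphi'(A)$ and $A$ is dense in $\Omega$; by continuity $\varphi'(A)$ is dense in $\varphi'(\Omega)=J$. Hence $\{v_i\}$ is dense in the interval $J$.

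For the converse, suppose $\{v_i\}$ is dense in some interval $J$. I want to rule out a jump discontinuity of $\varphi'$ at any point $p\in\Omega\setminus A$. Consider the one-sided limits $a=\lim_{x\to p^-}\varphi'(x)$ and $b=\lim_{x\to p^+}\varphi'(x)$, which exist by monotonicity and satisfy $a\le b$; suppose for contradiction $a<b$. Every component $A_i$ lying to the left of $p$ has $v_i\le a$, and every $A_i$ to the right has $v_i\ge b$, so no $v_i$ lands in the open interval $(a,b)$. But $A$ accumulates at $p$ from both sides (it is dense), so the $v_i$ take values arbitrarily close to $a$ from below and to $b$ from above; combined with the density of $\{v_i\}$ in $J$, the interval $J$ must then straddle the gap $(a,b)$ — i.e. $J$ meets both $(-\infty,a]$ and $[b,\infty)$ — yet $\{v_i\}\cap(a,b)=\varnothing$ contradicts density of $\{v_i\}$ in the interval $J$. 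Hence $a=b$, $\varphi'$ has no jump, and since $\Omega\setminus A$ is a closed nowhere-dense set across which $\varphi'$ is jump-free, $\varphi'$ extends continuously to $\Omega$ and $\varphi\in C^1$.

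The main obstacle is the bookkeeping in the converse: one must be careful that "$\{v_i\}$ dense in an interval $J$" is compatible with a putative gap $(a,b)$ in the range of $\varphi'$ only if $J\subseteq(-\infty,a]$ or $J\subseteq[b,\infty)$, in which case $J$ cannot simultaneously capture values approaching $a$ from below and $b$ from above. A clean way to organize this is to first observe that the closure of $\{v_i\}$ equals the closure of the range of $\varphi'$ on $A$, and that $\varphi$ is $C^1$ exactly when this closure is an interval and $\varphi'$ attains a connected range; then the proposition becomes a statement about when the closure of a monotone function's range (sampled on a dense set) is an interval, which is precisely the no-jump condition. I would present the argument in the jump-discontinuity form above since it is the most transparent.
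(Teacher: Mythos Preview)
Your proposal is correct and follows essentially the same approach as the paper: both directions hinge on the observation that $\varphi\in C^1$ is equivalent to the monotone function $\varphi'$ having no jump discontinuities, and that a jump would create a gap $(a,b)$ missed by the values $\{v_i\}$, contradicting density in an interval. Your write-up is in fact more explicit than the paper's on the converse---the paper simply asserts that the left-limit extension $v(x)=\lim_{z\uparrow x,\,z\in A}\varphi'(z)$ is ``increasing with no jump discontinuities'' without spelling out the gap argument---and you correctly flag the one subtlety, namely that ``dense in an interval'' must be read as ``a dense subset of an interval'' (equivalently, $\overline{\{v_i\}}$ is an interval), which is exactly what the forward direction produces.
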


\begin{proof}
Suppose $\varphi$ is convex and locally affine a.e.,
so $\varphi'$ is defined and constant on each component of an open set $A$ 
of full measure in $\Omega$.  If $\varphi$ is $C^1$, then 
the continuous image $\varphi'(\Omega)$ must be connected, hence
an interval, and $\varphi'(A)=\{v_i\}$ must be dense in it. 
On the other hand, if $\varphi'(A)$ is dense in an interval $I$,
then because $\varphi'$ is increasing on $A$, we have $\varphi=\int v\,dx$
where the function given by 
\[
v(x) = \lim_{z\uparrow x,\, z\in A} \varphi'(z) \,,\quad x\in\Omega,
\]
is increasing with no jump discontinuities. So $v$ is continuous,
and $\varphi$ is $C^1$.
\end{proof}

\begin{remark} By Theorem~\ref{thm_countable},
for any sequence $\{v_i\}$ of distinct values dense in an interval,
such $C^1$ potentials exist and are specified uniquely by any positive sequence $\{m_i\}$ with $\sum_i m_i=\lambda(\Omega)$. 
In this case $v=\varphi'$ is a Cantor-like function, 
continuous and increasing on $\Omega$ and constant on an interval $A_i$ with 
$\lambda(A_i)=m_i$. 
\end{remark}

%\pagebreak
%s------------------------------------------
\section{Finitely many pieces}\label{s:finite}
In this section we prove Theorem~\ref{thm_main} under condition (ii)
which states that the number of components $A_i$ of $A$ is finite. 
Recall that convexity of $\vp$ implies injectivity of $X_t$ by Lemma~\ref{lem:injective}.
Briefly, our strategy for proving the converse
will be to show that if $\vp$ is non-convex, then two adjacent components
must have velocities that force their images under the flow
$X_t$ to overlap immediately for $t>0$. We do this by finding a line segment
along which the restriction of $\vp$ is non-convex and intersects
$\D A$ only at finitely many points on flat ``faces'' between adjacent components.

Throughout this section we work under the basic assumptions of Theorem~\ref{thm_main},
and assume the dimension $d>1$.
Recall we assume $A$ is given by \eqref{d:Aset} 
and its components $A_i$ are open and connected and their number $N$ is finite. 
The case $N=1$ is trivial, so assume $N>1$.   
Given that $\vp$ is locally affine on $A$ and continuous on $\Omega$,
there exist $v_1,\ldots,v_N\in \R^d$ and $h_1,\ldots,h_N$ such that 
the representation \eqref{e:affine} extends by continuity to say
\begin{equation}\label{e:vpi}
    \vp(z) = v_i\mdot z + h_i \,, \quad z\in \bar A_i, 
    \quad i\in [N]=\{1,\ldots,N\}.
\end{equation}
Repeated values are possible.  
By  \eqref{e:vpi} and  \eqref{d:Aset},
each point in the interior of $\bar A_i$ must be in $A$.
Since $\bar A_i$ is disjoint from $A_j$ for $j\ne i$, 
the interior of $\bar A_i$ is $A_i$ 
and $\bar A_i$ is the disjoint union of $A_i$ and $\D A_i$.

\subsection{Geometry of the pieces}

We begin by precisely describing some of the geometric structure of 
the dense open set $A$ and its boundary (or complement) in $\Omega$.   
Define an ``adjacency function'' by
\begin{equation}\label{d:Jz}
\calI(z)=\{i\in[N]: z\in \bar A_i\}
\quad\text{for each $z\in\Omega$}.
\end{equation}
Evidently the cardinality $\# \calI(z)=1$ if 
$z\in A$. Define ``face'' and ``edge'' sets respectively by
\begin{equation}
    \label{d:Omega123}
    F =\{z\in\Omega: \# \calI(z) = 2\}, \quad
    E =\{z\in\Omega: \# \calI(z) \ge 3\}. 
\end{equation}
\begin{lemma}
Make the assumptions of Theorem~\ref{thm_main} including condition (ii). 
Let $A^c=\Omega\setminus A$. Then 
$A^c= \D A\cap\Omega$ and we have
 \[
 A = \{z\in\Omega: \#\calI(z)=1\}\,, \quad
 A^c = \{z\in\Omega:\#\calI(z)\ge2\} = F\cup E.
 \]
\end{lemma}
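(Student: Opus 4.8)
The plan is to establish the three asserted identities in sequence, leaning on the finiteness of $N$ and the extended affine representation \eqref{e:vpi}. First I would show $A = \{z\in\Omega:\#\calI(z)=1\}$. One inclusion is immediate: if $z\in A$ then $z$ lies in a unique component $A_i$, and since $\bar A_i$ is the disjoint union of $A_i$ and $\D A_i$ (as observed just before the lemma, using that the interior of $\bar A_i$ is $A_i$), $z\notin\bar A_j$ for any $j\ne i$, so $\#\calI(z)=1$. For the reverse inclusion, suppose $\#\calI(z)=1$, say $\calI(z)=\{i\}$, so $z\in\bar A_i$ and $z\notin\bar A_j$ for $j\ne i$. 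The point is that $z$ has a neighborhood meeting no $\bar A_j$ with $j\ne i$: since each $\bar A_j$ is closed and there are only finitely many of them, $U:=\Omega\setminus\bigcup_{j\ne i}\bar A_j$ is open and contains $z$. On $U\cap\bar A_i$, by \eqref{e:vpi}, $\vp$ agrees with the affine function $v_i\mdot(\cdot)+h_i$; but also $U\subseteq\bar A_i$ since $A=\bigsqcup A_k$ has full measure and hence is dense, forcing $U\subseteq\overline{A}=\bigcup_k\bar A_k$, and the only piece $U$ can meet is $\bar A_i$. Therefore $\vp$ is affine on the open set $U$ containing $z$, so $z\in A$ by \eqref{d:Aset}.

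Next, $A^c = \{z\in\Omega:\#\calI(z)\ge2\}$ follows directly by taking complements within $\Omega$, since every $z\in\Omega$ has $\#\calI(z)\ge1$: indeed $A$ is dense in $\Omega$, so $\Omega\subseteq\overline{A}=\bigcup_k\bar A_k$ (finite union of closed sets), giving $\calI(z)\ne\emptyset$ for all $z\in\Omega$. Combined with the first identity, $\Omega$ is the disjoint union of $\{\#\calI=1\}=A$ and $\{\#\calI\ge2\}$, so the latter is exactly $A^c$. The decomposition $A^c=F\cup E$ is then just the definition \eqref{d:Omega123}, since $\#\calI(z)\ge2$ means either $\#\calI(z)=2$ (so $z\in F$) or $\#\calI(z)\ge3$ (so $z\in E$), and these are disjoint.

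Finally, for $A^c=\D A\cap\Omega$: since $A$ is open, $\D A\cap\Omega\subseteq\Omega\setminus A=A^c$. Conversely, if $z\in A^c$ then $z\notin A$, and since $A$ is dense in $\Omega$, every neighborhood of $z$ meets $A$, so $z\in\overline{A}$; hence $z\in\overline{A}\setminus A\subseteq\D A$, and $z\in\D A\cap\Omega$. This gives $A^c=\D A\cap\Omega$.

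The only slightly delicate point — the one I would write most carefully — is the argument that $\#\calI(z)=1$ forces $z\in A$, specifically the step showing the open set $U=\Omega\setminus\bigcup_{j\ne i}\bar A_j$ is actually contained in $\bar A_i$ so that $\vp$ is affine throughout $U$. This is where finiteness of $N$ is genuinely used (to know $U$ is open, as a finite intersection of open sets, and to know $\overline{A}$ is the finite union $\bigcup_k\bar A_k$); in the countably infinite case $\bigcup_{j}\bar A_j$ need not be closed and the argument breaks down, consistent with the fact that continuity of the velocity field is possible there. Everything else is bookkeeping with density of $A$ and the disjointness structure $\bar A_i=A_i\sqcup\D A_i$ already in hand.
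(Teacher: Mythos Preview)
Your proof is correct and follows essentially the same strategy as the paper's: both rely on density of $A$ in $\Omega$ and finiteness of $N$ to show that $\#\calI(z)\ge2$ characterizes $A^c$. The only cosmetic difference is that the paper proves the implication $z\in A^c\Rightarrow\#\calI(z)\ge2$ directly via a sequence-and-pigeonhole argument (points $y_k\to z$ in $\Omega\setminus\bar A_i$ must hit some fixed $\bar A_j$ infinitely often), whereas you argue the contrapositive by exhibiting the open neighborhood $U=\Omega\setminus\bigcup_{j\ne i}\bar A_j\subseteq\bar A_i$ on which $\vp$ is affine.
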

\begin{proof}
Because $A$ is open and dense, $\D A\cap \Omega= A^c$.
The finite union $\bigcup_i \bar A_i$ is closed and contains $A$, 
hence $\bar A=\bar\Omega$, so necessarily $\#\calI(z)\ge1$ for all $z\in\Omega$.

Now, let $z\in A^c$. It remains to show $\#\calI(z)\ge2$. 
Fix $i$ with $z\in \bar A_i$. Necessarily $z\in\D\bar A_i$
since $z\notin A_i$. For any $k>0$ there exists $y_k\in \Omega\setminus \bar A_i$
with $|y_k-z|<1/k$. Then since $N$ is finite, some subsequence
of the $y_k$ lie in $\bar A_j$ for some fixed $j\ne i$.
It follows $z\in \bar A_j$, hence $\#\calI(z)\ge2$ as required.
\end{proof}

Next, for all $i,j\in[N]$ with $i\ne j$ we define 
\begin{equation}\label{d:Hij}
   H_{ij}  = \{z\in\R^d:  v_i\mdot z+h_i = v_j\mdot z+h_j\}.
\end{equation}
Provided $v_i\ne v_j$ this set is a hyperplane of codimension 1.
Let $\calH$ denote the collection of these co-dimension--1 sets.

\begin{prop}\label{prop:finite}
Make the assumptions of Theorem~\ref{thm_main} including condition (ii). 
Then:
\begin{itemize} 
    \item[(a)] The set $A^c$
   is contained in a finite union of codimension--1 hyperplanes.
\item[(b)] For any $z\in A^c$, $z\in F$ if and only if  
$z$ lies in $H_{ij}\cap B$ for some hyperplane $H_{ij}$ in $\calH$
and some open ball $B$ with $B\subset A_i\cup A_j\cup H_{ij}$. 
\item[(c)] The set $E$ is contained in a finite union of codimension--2 hyperplanes.
\end{itemize}
\end{prop}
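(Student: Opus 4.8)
The plan is to prove parts (a)--(c) in that order, deriving (c) from (a) and from a small-ball reduction that is also used for (b).

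\emph{Preliminary observation and part (a).} I would first record the following fact: for any $z\in A^c$ the affine functions $\ell_m(x):=v_m\mdot x+h_m$, $m\in\calI(z)$, cannot all be equal. Indeed, if they all equalled one affine $\ell$, then choosing a ball $B=B(z,r)\subset\Omega$ small enough that $B\cap\bar A_m=\emptyset$ for every $m\notin\calI(z)$ (possible since $[N]$ is finite and $z\notin\bar A_m$ for such $m$), we would get $B\subseteq\bigcup_{m\in\calI(z)}\bar A_m$ because $\bigcup_m\bar A_m=\bar A=\bar\Omega$, so $\vp=\ell$ on $B$ by \eqref{e:vpi}, contradicting $z\notin A$ via \eqref{d:Aset}. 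Hence some pair $i,j\in\calI(z)$ has $\ell_i\ne\ell_j$, and continuity gives $\ell_i(z)=\vp(z)=\ell_j(z)$; this excludes $v_i=v_j$ (which would force $h_i=h_j$ as well), so $H_{ij}$ is a genuine codimension--$1$ hyperplane of $\calH$ containing $z$. Since $\calH$ is finite, part (a) follows.

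\emph{Part (b).} For the forward direction, given $z\in F$ with $\calI(z)=\{i,j\}$, I would take $B=B(z,r)\subset\Omega$ with $B\cap\bar A_m=\emptyset$ for all $m\ne i,j$; the observation above gives $v_i\ne v_j$, so $H_{ij}\in\calH$. For $w\in B$: if $w\in A$ then $w$ lies in some $A_m$, and $w\in B$ forces $m\in\{i,j\}$, so $w\in A_i\cup A_j$; if $w\in A^c$ then $\calI(w)\subseteq\{i,j\}$ and $\#\calI(w)\ge2$ force $\calI(w)=\{i,j\}$, whence $\ell_i(w)=\vp(w)=\ell_j(w)$ and $w\in H_{ij}$. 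Thus $B\subseteq A_i\cup A_j\cup H_{ij}$, as required. For the converse, suppose $z\in A^c\cap H_{ij}\cap B$ with $H_{ij}\in\calH$ (so $v_i\ne v_j$) and $B\subseteq A_i\cup A_j\cup H_{ij}$. Since $z\in H_{ij}\cap B$, the set $B\setminus H_{ij}$ is a disjoint union of two connected open half-balls, each contained in $A_i\cup A_j$ and therefore, being connected and missing the hyperplane, entirely inside $A_i$ or entirely inside $A_j$; they cannot both lie in the same one (that would make $\vp$ affine on all of $B$, putting $z\in A$), so one sits in $A_i$ and the other in $A_j$, giving $\{i,j\}\subseteq\calI(z)$. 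Finally, any further $k\in\calI(z)$ would make $A_k\cap B$ a nonempty open subset of $A_k\cap H_{ij}$, which is impossible; hence $\calI(z)=\{i,j\}$, i.e.\ $z\in F$.

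\emph{Part (c).} It suffices to show every $z\in E$ lies on two \emph{distinct} members of $\calH$, for two distinct hyperplanes sharing the point $z$ are non-parallel and meet in a codimension--$2$ plane, and $\calH$ is finite, so $E\subseteq\bigcup_{H\ne H'\in\calH}(H\cap H')$. Suppose not; by (a) such a $z$ lies on exactly one $H\in\calH$. Choose $B=B(z,r)\subset\Omega$ small enough that $B\cap\bar A_m=\emptyset$ for all $m\notin\calI(z)$ and $B\cap H'=\emptyset$ for every $H'\in\calH$ not containing $z$ (finitely many constraints). Then by (a), $A^c\cap B\subseteq H$, so $B\setminus H\subseteq A$. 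As in (b), $B\setminus H$ splits into two connected open half-balls; on each, $\vp$ is locally affine, so its gradient is locally constant, hence constant, so $\vp$ is genuinely affine on each half-ball, equal to $\ell^+$ and $\ell^-$ respectively, with $\ell^+\ne\ell^-$ (else $z\in A$). Then for every $m\in\calI(z)$, $A_m\cap B$ is a nonempty open set, disjoint from $H$ and with $\vp=\ell_m$ on it, which forces $\ell_m\in\{\ell^+,\ell^-\}$. Since $\#\calI(z)\ge3$, two distinct indices $i\ne i'$ give the same function, say $\ell_i=\ell_{i'}=\ell^+$; then $A_i\cap B$ and $A_{i'}\cap B$ are nonempty subsets of the connected half-ball carrying $\ell^+$, which lies in a single component of $A$, forcing $A_i=A_{i'}$ -- a contradiction. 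This proves (c).

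\emph{Where the difficulty lies.} Parts (a) and (b) are bookkeeping once the preliminary observation is in hand. The crux is (c), and the one idea that makes it work is that \emph{confining the crack set $A^c$ locally to a single hyperplane $H$ forces each of the two half-balls of $B\setminus H$ to carry exactly one affine piece of $\vp$}, so at most two pieces can touch $z$; a repeated piece then collapses two components of $A$. The only technical care needed is to shrink $B$ so as to screen off both the finitely many irrelevant pieces and the finitely many hyperplanes of $\calH$ not through $z$; after that it is all connectedness of half-balls.
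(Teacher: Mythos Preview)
Your proof is correct and follows essentially the same approach as the paper: parts (a) and the forward direction of (b) are virtually identical, and the converse of (b) reaches the same conclusion with a slightly more explicit half-ball analysis (you first establish $\{i,j\}\subseteq\calI(z)$ via connectedness, whereas the paper argues $\calI(z)\subseteq\{i,j\}$ directly). For (c), rather than invoking (b) as the paper does, you unfold the same half-ball argument directly to show that a single hyperplane through $z$ would force $\#\calI(z)\le 2$; this is a bit more self-contained but amounts to the same idea. Your claim that $A_m\cap B$ is disjoint from $H$ is in fact correct (a point of $A_m\cap B\cap H$ would have a neighborhood on which $\vp$ equals both $\ell^+$ and $\ell^-$), so the argument is clean as written.
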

\begin{proof}
    (a) Let $z\in A^c$. Then $\# \calI(z)\ge 2$. 
For each pair of indices $i,j\in \calI(z)$, we must have 
\begin{equation}\label{e:ijeq}
   v_i\mdot z + h_i = v_j\mdot z + h_j  \,.
\end{equation}
Some such pair exists with $v_i\ne v_j$, 
for $\vp$ is not affine in any neighborhood of $z$ since $z\notin A$. 
Then $z$ lies in the codimension--1 hyperplane $H_{ij}$. This proves (a).

(b) Let $z\in A^c$ and assume $z\in F$. Then $\calI(z)=\{i,j\}$ with $v_i\ne v_j$, so $z\in H_{ij}$,
and the distance from $z$ to $\bar A_k$ is positive for any $k\notin \calI(z)$. 
Since $A$ has only finitely many components by (ii), 
there are only finitely many such $k$. Then for any small enough open ball $B$ containing $z$, 
$B\subset \bar A_i\cup\bar A_j$, while both 
$B\cap\D A_i$ and  $B\cap\D A_j$ lie in $H_{ij}$. Hence 
$B\subset A_i\cup A_j\cup H_{ij}$.

Conversely, suppose $z\in A^c$ and $z\in H_{ij}\cap B$ for 
some hyperplane $H_{ij}$ in the finite collection $\calH$
and some open ball $B\subset A_i\cup A_j\cup H_{ij}$.
Then for each $k\in\calI(z)$, $B\cap A_k$ is a non-empty open set.
Whenever $k\notin\{i,j\}$, however, since $A_k\cap A_i$ and 
$A_k\cap A_j$ are empty, necessarily $B\cap A_k = B\cap H_{ij}$.
This set must be empty since it is open and 
$H_{ij}$ has co-dimension 1.  It follows $\calI(z)=\{i,j\}$
since $\#\calI(z)\ge2$. Hence $z\in F$.

(c) If $z\in E$, then $z\in A^c$ but $z\notin F$. It follows from part (a) 
that $z$ must lie in some hyperplane $H_{ij}$ of $\calH$, and from part (b) that $B\setminus H_{ij}$
intersects $A^c$ for every sufficiently small ball $B$. 
Then since $\calH$ is finite, necessarily $z$ must lie in the intersection of two different 
(i.e., non-coinciding) hyperplanes of $\calH$. 
Such intersections form a finite collection of hyperplanes of co-dimension 2.
\end{proof}

\subsection{Convexity for finitely many pieces}

If the transport map $X_t(z)=z+t\nabla\vp(z)$ is injective on $A$ for small $t>0$, 
Proposition~\ref{prop:finite} allows us to prove the following local monotonicity property.
\begin{lemma}\label{lem:local_mono}
    Assume $X_t$ is injective on $A$ for all sufficiently small $t>0$. 
    Suppose $\bar A_i\cap \bar A_j$ contains a point $z\in F$. 
    Then in any sufficiently small open ball containing $z$, 
    \[
    (\nabla\vp(x)-\nabla\vp(y))\mdot(x-y) >0
    \quad\text{for all  $x\in A_i$ and $y\in A_j$.}
    \]
\end{lemma}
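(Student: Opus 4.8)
The plan is to reduce everything to a one-dimensional statement along a short segment. Since $z\in F=\{z:\#\calI(z)=2\}$ and $z\in\bar A_i\cap\bar A_j$, by Proposition~\ref{prop:finite}(b) there is an open ball $B$ containing $z$ with $B\subset A_i\cup A_j\cup H_{ij}$, and necessarily $v_i\ne v_j$. Shrinking $B$ if needed, we may assume $B$ is small enough that the images $X_t(B\cap A_i)=B\cap A_i+tv_i$ and $X_t(B\cap A_j)=B\cap A_j+tv_j$ are the only parts of $X_t(A)$ that could possibly intersect near $X_t(z)$ for small $t$ — this uses that only finitely many $A_k$ lie near $z$, just as in the proof of Proposition~\ref{prop:finite}(b). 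On $B$, the hyperplane $H_{ij}$ separates $B\cap A_i$ from $B\cap A_j$: writing $n=(v_i-v_j)/|v_i-v_j|$ for the unit normal to $H_{ij}$, up to relabelling we have $n\mdot(x-z)>0$ for $x\in B\cap A_i$ and $n\mdot(y-z)<0$ for $y\in B\cap A_j$, since $A_i$ and $A_j$ are open, connected, and lie on opposite open sides of $H_{ij}$ within $B$.

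Next I would extract the sign of $(v_i-v_j)\mdot n$ from injectivity. Pick $x_0\in B\cap A_i$ and $y_0\in B\cap A_j$ lying on a common line parallel to $n$ through a point of $H_{ij}\cap B$, close enough to $z$ that the whole segment $[x_0,y_0]$ stays in $B$ and crosses $H_{ij}$ exactly once, at a point $p$. Set $a=n\mdot(x_0-p)>0$ and $b=n\mdot(p-y_0)>0$, so $|x_0-y_0|=(a+b)$ along direction $n$. For small $t>0$, $X_t(x_0)=x_0+tv_i$ and $X_t(y_0)=y_0+tv_j$; projecting onto $n$,
\[
n\mdot\bigl(X_t(x_0)-X_t(y_0)\bigr) = (a+b) + t\,(v_i-v_j)\mdot n .
\]
If $(v_i-v_j)\mdot n<0$, then for $t$ slightly larger than $0$ but still small, the points $X_t(x_0)$ and $X_t(y_0)$ coincide in their $n$-component while remaining arbitrarily close in the orthogonal directions (the orthogonal components do not move at all along $n$, and choosing $x_0,y_0$ on the same $n$-line makes their orthogonal components equal). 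A short argument — sliding $x_0$ and $y_0$ towards $p$ so that $a+b\to 0$ — then produces genuinely coincident images $X_t(x_0)=X_t(y_0)$ with $x_0\ne y_0$ for some small $t>0$, contradicting injectivity of $X_t$ on $A$. Hence $(v_i-v_j)\mdot n\ge 0$, and since $v_i\ne v_j$ and $n$ is parallel to $v_i-v_j$, in fact $(v_i-v_j)\mdot n>0$, i.e. $(v_i-v_j)\mdot(v_i-v_j)>0$ trivially, but more usefully $(v_i-v_j)$ points in the $+n$ direction.

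Finally I would assemble the conclusion. For any $x\in A_i\cap B$ and $y\in A_j\cap B$ (after the shrinking above),
\[
(\nabla\vp(x)-\nabla\vp(y))\mdot(x-y) = (v_i-v_j)\mdot(x-y) = |v_i-v_j|\,n\mdot(x-y),
\]
and $n\mdot(x-y) = n\mdot(x-z) - n\mdot(y-z) > 0$ because the first term is positive and the second negative by the separation established above. This gives the strict inequality claimed. The main obstacle is the injectivity-to-sign step: one must be careful that choosing $x_0,y_0$ on a common line through $H_{ij}$ and then letting them approach $p$ actually yields two \emph{distinct} points with identical images, rather than merely images that become close; this is handled by noting that for fixed small $t$ with $(v_i-v_j)\mdot n<0$ one can solve $a+b=-t(v_i-v_j)\mdot n$ exactly with $a,b>0$, and the orthogonal components of $X_t(x_0)$ and $X_t(y_0)$ are equal by construction, so the images coincide exactly. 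A secondary technical point is ensuring $B$ is chosen small enough that no third piece $A_k$ interferes with the images near $X_t(z)$ for the relevant range of $t$, which follows from finiteness of $N$ and positivity of $\dist(z,\bar A_k)$ for $k\notin\{i,j\}$ together with continuity of the flow.
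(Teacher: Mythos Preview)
Your approach is the same as the paper's in spirit, but the way you set up the normal $n$ is circular and makes the injectivity step vacuous. You define $n=(v_i-v_j)/|v_i-v_j|$ and then claim that ``up to relabelling'' $A_i$ lies on the $+n$ side of $H_{ij}$. Swapping $i\leftrightarrow j$ flips the sign of $v_i-v_j$ and hence of $n$ itself, so the statement ``$A_i$ is on the $+n$ side'' is invariant under relabelling; you cannot arrange it that way. Whether $A_i$ lies on the $+n$ side for \emph{this} $n$ is exactly the content of the lemma. Moreover, with $n=(v_i-v_j)/|v_i-v_j|$ one has $(v_i-v_j)\mdot n=|v_i-v_j|>0$ automatically, so your hypothesis ``$(v_i-v_j)\mdot n<0$'' never occurs and the injectivity argument is never invoked. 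You even note that the conclusion reduces to $(v_i-v_j)\mdot(v_i-v_j)>0$ ``trivially''; that is a symptom of the problem: all the work has been smuggled into the unjustified relabelling.

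The repair is what the paper does: choose the unit normal $u$ \emph{geometrically}, pointing from the $A_j$-half of $B$ toward the $A_i$-half (well defined since the two connected components of $B\setminus H_{ij}$ are each contained in one of $A_i,A_j$). Then $v_i-v_j=au$ for some nonzero $a\in\R$, and the task is to show $a>0$. The paper's injectivity step is also cleaner than your sliding construction: for small $b>0$ set $z_i=z+bu\in A_i$ and $z_j=z-bu\in A_j$ and compute
\[
X_t(z_i)-X_t(z_j)=(z_i-z_j)+t(v_i-v_j)=(2b+ta)\,u.
\]
Injectivity on $A$ for all sufficiently small $t>0$ forces $2b+ta\ne0$ for all small $b,t>0$, hence $a>0$. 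The final inequality then follows since $u\mdot(x-y)>0$ for $x\in B\cap A_i$, $y\in B\cap A_j$. Your extra shrinking of $B$ to keep images of other pieces $A_k$ away is unnecessary for this lemma, as only points of $A_i$ and $A_j$ enter the argument.
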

\begin{proof}
   Necessarily $\calI(z)=\{i,j\}$ and $z\in H_{ij}$. Let $B$ be an open ball as given by Proposition~\ref{prop:finite}(b).
   Let $u$ be a unit vector orthogonal to the hyperplane  $H_{ij}$ pointing from $A_j$ toward $A_i$. 
   By the definition of $\calH$, $v_i\ne v_j$ and  $v_i-v_j=au$ for some nonzero $a\in\R$. 
   For all small enough $b>0$, $z_i:=z+bu\in A_i$ and $z_j:=z-bu\in A_j$.
   The injectivity hypothesis on $X_t$ implies
   \[
   0\ne X_t(z_i)-X_t(z_j) = z_i - z_j + t(v_i-v_j) = (2b + ta)u \,,
   \]
   for all sufficiently small positive $b$ and $t$. 
   This necessitates $a>0$, and implies $(v_i-v_j)\mdot(z_i-z_j) = 2ab>0$.
   This entails the result, since both $u\mdot (z_i-z_j)$ and $u\mdot(x-y)$ are positive
   for $x,y\in B$ with $x\in A_i$, $y\in A_j$.
\end{proof}

Now we are able to complete the proof of Theorem~\ref{thm_main} under condition (ii).

\begin{proof}[Proof of Theorem~\ref{thm_main}(ii)]
1. Assume $X_t$ is injective on $A$ for all sufficiently small $t>0$,
but $\vp$ is not convex. Then there must 
exist distinct $x,y\in\Omega$ and $\hat\tau\in(0,1)$ such that 
\begin{equation}\label{e:non_tau}
\vp(x\hat\tau + y(1-\hat\tau)) > \vp(x)\hat\tau +\vp(y)(1-\hat\tau).
\end{equation}
We may take $x,y\in A$, since $\vp$ is continuous and $A$ is dense. 
Let $u=x-y$ and let $u^\perp$ be the hyperplane of co-dimension 1 through the origin and orthogonal to $u$. 
The orthogonal projection $P_u$ of $\R^d$ onto $u^\perp$ maps the line segment
$\overline{xy}$ to a point, where
\[
\overline{xy} = \{ x\tau + y(1-\tau): \tau\in[0,1]\} \,.
\]
The same projection maps the set $E$ of Proposition~\ref{prop:finite} 
into a finite union of hyperplanes of relative codimension 1 in $u^\perp$. 
The same is true for any co-dimension--1 hyperplanes $H_{ij}$ in $\calH$ that happen
to have $u$ in their tangent space.
There exist arbitrarily small $v\in u^\perp$
such that $P_ux + v = P_u(x+v)$  does not lie on any of these projected hyperplanes.
Since $P_u(x+v)=P_u(y+v)$, we may then replace $x,y$ by $x+v, y+v$ 
and ensure that the line $\overline{xy}$ is disjoint from $E$ and transverse
to every hyperplane $H_{ij}\in\calH$ that it intersects, 
and \eqref{e:non_tau} still holds.
The line $\overline{xy}$ then intersects $A^c$
only at points of $F$, and only at finitely many of those.  As the line $\overline{xy}$
cannot be contained in a single component of $A$, at least one such intersection point exists.

2.  The function  $\hat \vp(\tau)= \vp(x\tau+y(1-\tau))$ defined for $\tau\in[0,1]$ satisfies
\[
\frac{d\hat\vp}{d\tau} = \nabla \vp(x\tau+y(1-\tau)) \mdot (x-y)
\]
whenever $x\tau+y(1-\tau)\in A$. Then $d\hat\vp/d\tau$ is locally constant on $(0,1)$,
with a jump at any value of $\tau$ where  $z=x\tau+y(1-\tau)\in A^c$. Necessarily $z\in F$
by step 1, and by applying Lemma~\ref{lem:local_mono} we can conclude that 
$d\hat\vp/d\tau$
makes a {\em positive} jump at such a value of $\tau$. This implies $\hat\vp$ is convex
on $(0,1)$, contradicting \eqref{e:non_tau}.  Hence $\vp$ is convex in $\Omega$.
This finishes the proof of Theorem~\ref{thm_main} under condition (ii).
\end{proof}

%\pagebreak
%s------------------------------------------
\section{Continuously differentiable potentials}\label{s:C1}

In order to prove Theorem~\ref{thm_main} under condition (iii), it suffices to prove 
the following proposition.  
The proof is motivated by the idea that the transport maps $X_t$
are related to characteristic curves for the Hamilton-Jacobi initial-value problem
\[
\D_t u + \tfrac12|\nabla u|^2 = 0, \qquad u(x,0)=\vp(x),
\]
whose solution, under suitable conditions, is given by the Hopf-Lax formula
\begin{equation}\label{e:HopfLax2}
u(x,t) = \min_y \left( \frac{|x-y|^2}{2t} + \vp(y) \right).
\end{equation}
The proof will make use of Theorem~\ref{thm_semi} in order to ensure  that a 
certain needed minimizer exists inside $\Omega$.

\begin{prop}
Let $\Omega$ be a bounded open convex set in $\R^d$. Let
$\vp\colon\Omega\to\R$ be $C^1$ on $\Omega$ and locally affine a.e.
Let $A$ be the open set in \eqref{d:Aset}.
Suppose  $\vp$ is not convex.
Then  $X_t$ is non-injective on $A$ for all sufficiently small $t>0$.
\end{prop}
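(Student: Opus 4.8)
The plan is to argue by contrapositive: assuming $\vp$ is $C^1$, locally affine a.e., and \emph{not} convex, I want to produce two distinct points $x,y\in A$ with $X_t(x)=X_t(y)$ for all small $t>0$. The strategy is to exploit the Hopf--Lax formula \eqref{e:HopfLax2} for the Hamilton--Jacobi problem \eqref{e:HJ1}. First I would note that since $\vp$ is locally affine a.e., Theorem~\ref{thm_semi} tells us that $\vp$ is semi-convex, i.e.\ $\psi_t(z)=\tfrac12|z|^2+t\vp(z)$ is convex for all sufficiently small $t>0$. Semi-convexity is exactly the condition under which the Hopf--Lax minimum in \eqref{e:HopfLax2} is attained at an interior point: for $x$ in the interior of $\Omega$ and $t$ small, $y\mapsto \tfrac{|x-y|^2}{2t}+\vp(y) = \tfrac1t\bigl(\psi_t(y) - x\cdot y\bigr) + \tfrac{|x|^2}{2t}$ is a convex function of $y$ on $\bar\Omega$ (extended to $+\infty$ outside), so its minimizer(s) lie where $\nabla\psi_t(y)=x$, i.e.\ $y$ with $X_t(y)=x$. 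In particular the set of minimizers is precisely $X_t^{-1}(x)\cap\Omega$, and by semi-convexity plus compactness this set is non-empty for every $x\in\Omega$ once $t$ is small.

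Next I would use non-convexity of $\vp$ to locate an $x$ whose Hopf--Lax minimizer is non-unique. Since $\vp$ is not convex, there is a chord where $\vp$ lies strictly above its secant: $\vp(x_0)>\tfrac12(\vp(a)+\vp(b))$ for some $a,b\in\Omega$ with $x_0=\tfrac12(a+b)$ (after symmetrizing the parameter; more generally any interior chord violation). The idea is that for the midpoint $x_0$ and small $t$, the function $u(x_0,t)=\min_y(\tfrac{|x_0-y|^2}{2t}+\vp(y))$ is controlled by the chord: evaluating the bracket at $y=a$ and $y=b$ gives the same value $\tfrac{|x_0-a|^2}{2t}+\vp(a)$ (since $|x_0-a|=|x_0-b|$), which is strictly less than the value $\vp(x_0)$ obtained at $y=x_0$. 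So the minimizer is not $x_0$ itself; it is pulled away from $x_0$. By symmetry of the configuration under reflection through $x_0$, if $y_*$ is a minimizer then so is its mirror image $2x_0-y_*$. The remaining task is to show these two are genuinely distinct — equivalently, that $x_0$ is not itself a minimizer — which is exactly what the strict chord inequality buys, provided $t$ is small enough that the quadratic penalty $\tfrac{|x_0-y|^2}{2t}$ dominates and keeps the minimizer in a small neighborhood; and a symmetric pair of distinct minimizers $y_*\ne 2x_0-y_*$ gives $X_t(y_*)=x_0=X_t(2x_0-y_*)$, contradicting injectivity. Finally I would check these minimizers lie in $A$: since $\nabla\psi_t$ is a.e.\ constant on each $A_i$, the set of $x$ whose minimizer set touches $A^c=\Omega\setminus A$ is small, so one can perturb $a,b$ slightly (preserving the strict chord violation, by continuity of $\vp$) to arrange that the minimizers land in $A$, or alternatively observe that a minimizer $y$ of a convex function satisfies $x=\nabla\psi_t(y)$ which forces differentiability of $\psi_t$ hence membership in $A$ after a further perturbation.

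The main obstacle I anticipate is the bookkeeping needed to guarantee that \emph{both} mirror-image minimizers are distinct from each other and both lie inside $A$ (not just inside $\Omega$), uniformly for all small $t>0$ rather than for a single $t$. The strict chord inequality gives distinctness at $x_0$ for each fixed small $t$, but one must rule out the degenerate scenario where, as $t\downarrow 0$, the unique minimizer converges to $x_0$ and the "two" minimizers coalesce — this is where the \emph{strictness} of \eqref{e:non_tau} must be leveraged quantitatively, e.g.\ by a compactness/continuity argument showing the minimizer stays a definite distance from $x_0$ along the chord direction, or by working with a whole interval of chord parameters $\hat\tau$ and using that $\hat\vp(\tau)=\vp(x\tau+y(1-\tau))$ has a strict interior max of its upper convex envelope minus itself. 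A secondary technical point is extending $\vp$ (or $\psi_t$) to all of $\R^d$ so that the Hopf--Lax formula and its minimizer analysis are literally valid — here the paper's convention of setting $\psi_t=+\infty$ outside $\bar\Omega$ together with the interior-minimizer fact from semi-convexity handles it, but the boundary behavior of $\vp$ (merely $C^1$ on the open set $\Omega$, possibly not extending continuously to $\bar\Omega$) needs a brief remark, likely resolved by shrinking $\Omega$ slightly to a smaller convex set still containing $x_0,a,b$ and the relevant minimizers.
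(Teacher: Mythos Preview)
Your proposal has two genuine gaps.

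\medskip
\textbf{First, you have applied Theorem~\ref{thm_semi} backwards.} That theorem says that for a locally affine a.e.\ function, convexity and semi-convexity are \emph{equivalent}. Since the hypothesis here is that $\vp$ is \emph{not} convex, the correct conclusion is that $\vp$ is \emph{not} semi-convex: $\psi_t$ fails to be convex for \emph{every} $t>0$. Your argument relies on convexity of $\psi_t$ to identify Hopf--Lax minimizers with solutions of $\nabla\psi_t(y)=x$, but this identification is unavailable. In fact the paper's proof uses Theorem~\ref{thm_semi} in exactly the opposite direction: non-semi-convexity guarantees a point $z_0$ with $\psi_t(z_0)>\psi_t^{**}(z_0)$, and the Hopf--Lax minimizer $y_1$ for $x=X_t(z_0)$ must lie on the ``touching set'' where $\psi_t=\psi_t^{**}$, so $y_1\ne z_0$ yet $X_t(y_1)=X_t(z_0)$. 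The gap between $\psi_t$ and its convexification is the engine of the argument, not an obstacle.

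\medskip
\textbf{Second, the reflection symmetry you invoke does not exist.} You claim that if $y_*$ minimizes $y\mapsto \tfrac{|x_0-y|^2}{2t}+\vp(y)$ then so does $2x_0-y_*$. The quadratic term is indeed invariant under $y\mapsto 2x_0-y$, but $\vp$ has no such symmetry; the only information you have about $\vp$ near the chord is the single strict inequality $\vp(x_0)>\tfrac12(\vp(a)+\vp(b))$, which says nothing about $\vp(2x_0-y_*)$ for a minimizer $y_*$. So the proposed mechanism for producing two distinct preimages does not work.

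\medskip
The paper's route avoids both issues: it shrinks $\Omega$ slightly to control $|\nabla\vp|$ and $|\vp|$ and force the Hopf--Lax minimizer into the interior by a direct barrier estimate (no semi-convexity needed), and it distinguishes the two preimages $z_0$ and $y_1$ via the convexification gap rather than any symmetry. A final $C^1$ perturbation then places both preimages in $A$.
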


\begin{proof}
1.  Suppose $\vp$ is not convex. Then it is not convex in some nonempty subset
\[
\Omega_\eps = \{x\in\Omega: \dist(x,\D\Omega)>\eps\}, 
\]
for some $\eps>0$ (fixed). 
The set $\Omega_\eps$ is convex itself, as is easily shown.
Let $L=\sup_{\overline{\Omega_{\eps}}}|\nabla\vp|$ and 
$M=\sup_{\overline{\Omega_{\eps/4}}}|\vp|$.  Fix $t>0$ so $Lt<\eps/2$ and $Mt<\eps^2/64$.

2.  By Theorem~\ref{thm_semi},  $\vp$  is not  semi-convex on $\Omega_\eps$, 
hence $\psi_t(z)=\frac12|z|^2+t\vp(z)$ 
is not convex on $\Omega_\eps$, and cannot coincide with its convexification $\psi_t^{**}$. 
Since $A$ is dense in $\Omega_\eps$, there exists $z_0\in\Omega_\eps\cap A$ such that $\psi_t(z_0)>\psi_t^{**}(z_0)$.
Then $v_0=\nabla\vp(z)$ is constant for all $z$ in some small neighborhood of  $z_0$ contained in $A$.
Let $x = z_0 + t v_0$.
Then $|x-z_0|\le tL<\eps/2$, so $x\in \Omega_{\eps/2}$.

3. Taking the min over $y\in\overline{\Omega_{\eps/4}}$  in the Hopf-Lax formula \eqref{e:HopfLax2},   
we have $u(x,t)\le M$ by taking $y=x$. When $y\in \D\Omega_{\eps/4}$ we have 
$|x-y|>\eps/4$, whence
\[
\frac{|x-y|^2}{2t} + \vp(y) \ge \frac{\eps^2}{32 t} - M >M.
\]
Hence any minimizer $y_1$ in $\overline{\Omega_{\eps/4}}$ lies in the open set $\Omega_{\eps/4}$, and it follows 
\[
x=y_1+t\nabla\vp(y_1) = z_0+t\nabla\vp(z_0),
\quad \text{i.e.,}\quad X_t(y_1)=X_t(z_0).
\]
Moreover, with $h=tu(x,t)-\frac12|x|^2$,
we have $h+x\mdot y\le \psi_t(y)$ for all $y\in\Omega_{\eps/4}$, with equality at $y_1$.
Since the affine function  $h+x\mdot y\le\psi_t^{**}(y)\le \psi_t(y)$ for all $y$,
we infer $\psi_t(y_1)=\psi_t^{**}(y_1)$.
Hence $y_1\ne z_0$. 

4.  Note $z_0 = x-tv_0=X_t(y_1)-tv_0$. 
Because $A$ is dense and $X_t$ is continuous, 
we can find $\tilde y_1\in A$ such that 
$\tilde z_0 := X_t(\tilde y_1)-tv_0 \in A$  with $\tilde z_0\ne \tilde y_1$.
Yet $v_0=\nabla\vp(\tilde z_0)$ and $\tilde x := X_t(\tilde z_0)=X_t(\tilde y_1)$.
This contradicts the assumed injectivity of $X_t$ on $A$, and concludes the proof.
\end{proof}

\begin{remark}
This Proposition handles locally affine functions $\vp$ that resemble
the Cantor expansion example in 1D in that they have continuous gradient. 
\end{remark}

\begin{remark}
   We suspect that if $\vp$ is $C^1$, locally affine and non-convex then $X_t$ is non-injective for {\em every} $t>0$. 
   But we leave this issue aside for the present.
\end{remark}

%s------------------------------------------
\section{Mass concentrations in convexified transport}\label{s:concentrations}

The main goal of this section is to prove Theorem~\ref{thm:monge_ampere}.
As mentioned in the Introduction, the measure $\mam_t$ is related to the second Hopf formula
for the solution to the following initial value problem with convex initial data:
\begin{equation}\label{e:Ham2}
\D_tw + \vp(\nabla w)=0, \qquad w(x,0)=\psi_0^*\,.
\end{equation}
Here 
$f^*(x)=\sup_{z\in\R^d} x\mdot z - f(z)$ 
denotes the Legendre transform of $f$, and
\begin{equation}
\psi_0(y) = \tfrac12 |y|^2 + \ind_{\bar\Omega}(y),
\end{equation}
where $\ind_S$ is the indicator function of the set $S$: 
$\ind_S(z)=0$ if $z\in S$, $+\infty$ if $z\notin S$.
Since $\psi_0^*(x)=\sup_{z\in\bar\Omega}x\mdot z-\frac12|z|^2$ and this is Lipschitz, 
results of Bardi and Evans~\cite{BardiEvans} imply that 
(regarding $\vp$ as extended continuously to all of $\R^d$)
the unique viscosity solution of \eqref{e:Ham2} is given by 
the second Hopf formula, which states
\begin{equation}\label{e:2ndHopf}
w_t = \psi_t^*\qquad\text{where}\quad\psi_t= \psi_0+t\vp \,. 
\end{equation}

We will make no direct use of this fact. 
Instead, we will focus attention on what is known as the {\em \MA} measure
for the convex function $\pts$.  This is the Borel measure 
whose value on each Borel set $B$ in $\R^d$ is given by
\begin{equation}\label{d:mut}
\mam_t(B) = \lambda(\D\pts(B)) =  \lambda\left(\bigcup_{x\in B}\D \pts(x)\right).
\end{equation}
See \cite[p.~7]{Figalli-MongeAmpere}.  
Results to be quoted below show that this agrees with 
the pushforward formula for $\mam_t$  stated in Theorem~\ref{thm:monge_ampere}.
\begin{remark}
    For fixed $t$, the fact that the function $w_t$ has \MA\ measure given by 
    $\mam_t$
    simply means that $u=w_t$ is the {\em Alexandrov solution} to the \MA\ equation 
    \[
    \det D^2 u =  \mam_t\,. 
    \]
\end{remark}

\subsection{Convex mass transport}

First, we establish that absolute continuity of $\mam_t$ is a necessary consequence
when $\vp$ is convex. Indeed, $\mam_t$ is given by locally rigid transport in this case.

\begin{prop}\label{prop:mu_convex}
Let $\Omega$ be a bounded open convex set in $\R^d$, and let
$\vp\colon\bar\Omega\to\R$ be continuous. 
Assume $\vp$ is locally affine a.e.,
and let $A$ be the open set defined in \eqref{d:Aset}.
Further assume $\vp$ is convex.
Then for all $t>0$, the \MA\ measure in \eqref{d:mut} is given by
\[\mam_t=\lambda\mres X_t(A)\,,\]
Lebesgue measure on the set $X_t(A)$ whose each component 
is translated rigidly.
\end{prop}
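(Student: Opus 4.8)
The plan is to compute the \MA\ measure $\mam_t(B)=\lambda(\D\pts(B))$ directly from the definition \eqref{d:mut}, using the inversion of subdifferentials together with the fact that on the full-measure set $A$ the map $X_t$ acts as a translation on each component.

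First I would record that $\psi_t=\tfrac12|\mdot|^2+t\vp+\ind_{\bar\Omega}$ is a proper, lower semicontinuous convex function: its effective domain is the nonempty compact set $\bar\Omega$, and it is continuous there since $\vp$ is. Because $\vp$ is convex, $\psi_t$ is convex, so $\ptss=\psi_t$ and the standard subdifferential inversion rule for closed proper convex functions applies, namely $x\in\D\pts(p)\iff p\in\D\psi_t(x)$. Hence for any Borel set $B\subset\R^d$,
\[
\D\pts(B)=\bigcup_{p\in B}\D\pts(p)=\{x\in\R^d:\ \D\psi_t(x)\cap B\neq\emptyset\}\,,
\]
which is empty outside $\bar\Omega$. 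For $x\in A$, $\vp$ is affine on a neighborhood of $x$, so $\psi_t$ is differentiable at $x$ with $\D\psi_t(x)=\{x+t\nabla\vp(x)\}=\{X_t(x)\}$; the exceptional set $\bar\Omega\setminus A$, on which $\D\psi_t(x)$ may be a larger normal cone, is Lebesgue-null. Thus $\D\pts(B)$ coincides with $A\cap X_t^{-1}(B)$ up to a Lebesgue-null set, and (invoking the standard measurability of $\D\pts(B)$ for Borel $B$, see \cite{Figalli-MongeAmpere}) we get
\[
\mam_t(B)=\lambda(\D\pts(B))=\lambda\bigl(A\cap X_t^{-1}(B)\bigr)\,.
\]

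Next I would split over the components $A_i$: we have $\lambda(A\cap X_t^{-1}(B))=\sum_i\lambda(A_i\cap X_t^{-1}(B))$, and on $A_i$ the map $X_t$ is the translation $z\mapsto z+tv_i$, so $A_i\cap X_t^{-1}(B)=A_i\cap(B-tv_i)$; translation invariance of $\lambda$ then gives $\lambda(A_i\cap X_t^{-1}(B))=\lambda(X_t(A_i)\cap B)$. Since $\vp$ is convex, Lemma~\ref{lem:injective} shows $X_t$ is injective on $A$, so the open sets $X_t(A_i)=A_i+tv_i$ are pairwise disjoint and $X_t(A)=\bigsqcup_i X_t(A_i)$. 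Therefore
\[
\mam_t(B)=\sum_i\lambda(X_t(A_i)\cap B)=\lambda\bigl(X_t(A)\cap B\bigr)
\]
for every Borel $B$, i.e.\ $\mam_t=\lambda\mres X_t(A)$, as claimed.

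The argument is essentially routine once the subdifferential inversion is in place; the one point requiring care is the reduction to the full-measure set $A$ — specifically, verifying that $\D\psi_t(x)$ is the singleton $\{X_t(x)\}$ for every $x\in A$ (which uses that $A$ consists of interior points of $\Omega$ at which $\vp$ is locally affine, so there is no boundary normal-cone contribution) and that the residual sets $\bar\Omega\setminus A$ and $\D\pts(B)\setminus A$ are Lebesgue-null and hence negligible in the measure identity.
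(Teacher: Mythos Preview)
Your proof is correct and follows essentially the same route as the paper's: both invoke Fenchel--Moreau and the inversion $(\D\psi_t)^{-1}=\D\pts$ to rewrite $\mam_t(B)$ as $\lambda(A\cap(\D\psi_t)^{-1}(B))$ modulo the null set $\bar\Omega\setminus A$, then split over the components $A_i$, use that $X_t$ is translation by $tv_i$ on $A_i$ together with Lemma~\ref{lem:injective}, and sum. Your explicit remark that $A\subset\Omega$ so no boundary normal cone enters $\D\psi_t(x)$ for $x\in A$ is a point the paper leaves implicit.
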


\begin{proof}
    To begin we note that for each $t>0$, $\psi_t:\R^d\to(-\infty,\infty]$ is convex,
    lower semicontinuous, and finite on $\bar\Omega$. 
    Then $\psi_t=\ptss$  by the Fenchel-Moreau theorem; see \cite[\S1.4]{Brezis2011}.
    Several further basic facts regarding the subgradients $\D\psi_t$ in this context are the following
    (see \cite[Appendix A]{LPS19} for simple proofs of (2) and (3)):
\begin{itemize}
\item[(1)] 
The inverse $(\D\psi_t)\inv=\D\psi_t^*$, according to Rockafellar~\cite[Thm 23.5]{Rockafellar}.
\item[(2)] 
$x\in \D\psi_t(y)$ iff $x=y+z$ with $z\in\D(\ind_{\bar\Omega}+t\vp)(y)$.
\item[(3)] $\D\psi_t$ has range $\R^d$.  
\end{itemize}

Let $B$ be a Borel set in $\R^d$ and 
let $x\in B$, $y\in(\D\pts)(x)$. 
Then $x\in\D\psi_t(y)$ by (1), 
whence necessarily $y\in \bar\Omega$, for otherwise $\D\psi_t(y)$ is empty.
As the set $\bar\Omega\setminus A$ has Lebesgue measure zero,
by (1) it follows
\[
\mam_t(B) 
= \lambda\left(A\cap(\D\psi_t)\inv(B)\right).
\]

Let the components of $A$ be denoted $A_i$ and let $v_i$ be the value of $v=\nabla\vp$ in $A_i$.
We claim that for each $i$,
\[
A_i\cap (\D\psi_t)\inv(B) 
= A_i\cap (B-tv_i).
\]
Indeed, if $y\in A_i\cap (\D\psi_t)\inv(B)$, there exists $x\in B$ with 
$x\in \D\psi_t(y)=\{y+tv_i\}$ so $y\in B-tv_i$. And if $y\in A_i\cap(B-tv_i)$,
$x:=y+tv_i=\nabla\psi_t(y)\in B$ so $y\in (\D\psi_t)\inv(B)$.

Recalling that $X_t(y)=y+t\nabla\vp(y)$ is injective on $A$ by 
Lemma~\ref{lem:injective},
by translation invariance of Lebesgue measure it follows
\[
\mam_t(B) = \sum_i \lambda(A_i\cap(B-tv_i)) = \sum_i\lambda(X_t(A_i)\cap B) =
\lambda(X_t(A)\cap B).
\]
Hence $\mam_t=\lambda\mres{X_t(A)}$. 
\end{proof}

\begin{remark} The situation in Proposition~\ref{prop:mu_convex} 
provides a particularly simple special solution of a \MA\ equation 
of the general form
\begin{equation}\label{e:MA-McCann}
\rho'(\nabla\psi(y)) \det D^2\psi(y) = \rho(y) \,,
\end{equation}
in which $\psi = \psi_t$, $\rho = \one_\Omega$ and $\rho' = \one_{X_t(A)}$.
In~\cite[Sec.~4]{McCann97}, McCann proved that for any convex $\psi$,
if $\rho$ is the density of an absolutely continuous probability measure
also denoted $\rho$ in  the interior of the domain of $\psi$, 
and if $\rho'=\nabla\psi_\sharp \rho$
is absolutely continuous with density also denoted $\rho'$,
then the \MA\ equation \eqref{e:MA-McCann} holds a.e.~in $\Omega$,
where the Hessian $D^2\psi(y)$ is interpreted in the Alexandrov sense.
\end{remark}

%\vfil \pagebreak
\subsection{Non-convex mass transport}

Our next goal is to  associate non-convexity of $\vp$ with 
formation of singular concentrations in $\mam_t$, as follows.
Recall we assume $\vp$ is continuous on $\bar\Omega$, and
$\psi_t$ is given by \eqref{e:2ndHopf}, taking the value $+\infty$
outside the convex set $\bar\Omega$. 
Then $\psi_t$ is lower semicontinuous on $\R^d$ and its convexification
is $\ptss$, which is also finite only on $\bar\Omega$.
The Legendre transform of $\ptss$ is $\psi_t^{***}=\pts$ by 
the Fenchel-Moreau theorem, and by
\cite[Thm.~23.5]{Rockafellar} cited in (1) above we have the inverse relation 
\[
(\D\ptss)\inv = \D\pts \,.
\]
Hence the \MA\ measure $\mam_t = (\D\ptss)_\sharp(\lambda\mres \Omega)$,
for this simply means 
\begin{equation}\label{e:ktpullback}
\mam_t(B)=\lambda((\D\ptss)\inv(B)),
\end{equation}
which is the same as \eqref{d:mut}.
This is not different from the formula 
in Theorem~\ref{thm:monge_ampere}, saying
$\mam_t = (\nabla\ptss)_\sharp \lambda$,
because any set $(\D\ptss)\inv(B)$ is contained in $\bar\Omega$ and
can differ from $(\nabla\ptss)\inv(B)$ only 
at points where $\ptss$ is not differentiable, which form a Lebesgue null set.
(A similar point is made in \cite[Lemma~4.1]{McCann97} in a more general context.)

Our main result in this section is the following theorem
which completes the proof of Theorem~\ref{thm:monge_ampere}
by establishing the sufficiency of the absolute continuity of $\mam_t$ 
for the convexity of $\vp$.
It shows that when $\vp$ is non-convex, the mass evolution determined by the Monge-Amp\`ere measure
$\mam_t$ decomposes into a part 
given by rigid translation $z\mapsto\nabla\psi_t(z)=z+t\nabla\vp(z)$ locally, 
and a nontrivial remainder that instantaneously concentrates on a null set. 
We comment on the relationship of this result with the adhesion model 
of cosmology at the end of this section.

\begin{definition}\label{d:Thetat}
For each $t>0$ we define the ``touching set''
\begin{align}
\Theta_t &= 
\{y\in\bar\Omega:  \psi_t(y)=\ptss(y)\}\,,
\label{d:Ths}
\end{align}
and for $t=0$ we define $\Theta_0=\bar\Omega$.
We let $\intr{\Theta_t}$ denote the interior of $\Theta_t$.
\end{definition}

\begin{theorem}\label{thm:MAt}
Let $\Omega$ be a bounded open convex set in $\R^d$, and let 
$\vp:\bar\Omega\to\R$ be continuous. 
Assume $\vp$ is locally affine a.e., and let $A$ be the open set defined in \eqref{d:Aset}.
Also assume $\vp$ is non-convex.
Let $t>0$ and define the sets
\[
\calB_t = \nabla\psi_t(A\cap\intr{\Theta_t}), \quad
\calS_t = \nabla\ptss(A\setminus\intr{\Theta_t}). 
\]
Then the Monge-Amp\`ere measure $\mam_t$ for $\pts$ has the 
(Lebesgue) decomposition 
\[
\mam_t = \mu_t+\nu_t \qquad\text{where}\quad 
\mu_t = \lambda\mres\calB_t \,, 
\quad \nu_t = \mam_t\mres \calS_t.
\]
In addition, 
\begin{itemize}
\item[(i)] The sets $\calB_t$ and $\calS_t$ are disjoint,
\item[(ii)] The map $\nabla\psi_t:A\cap\intr{\Theta_t}\to\calB_t$ 
is bijective and locally rigid translation,
\item[(iii)] $\lambda(\calS_t)=0$ and $\mam_t(\calS_t)>0$.
\end{itemize}
\end{theorem}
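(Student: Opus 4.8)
The plan is to leverage the Legendre-transform machinery already set up in the section, together with the structural facts (1)--(3) about subgradients, and to mine non-convexity of $\vp$ into a nontrivial "touching deficit." The key objects are the touching set $\Theta_t$ where $\psi_t$ coincides with its convexification $\ptss$, and the two transport maps $\nabla\psi_t$ (locally rigid, well-defined on $A$) and $\nabla\ptss$ (the a.e.-defined gradient of the convexification). First I would observe that since $\psi_t$ is lower semicontinuous and finite exactly on $\bar\Omega$, and $\psi_t^{***}=\pts$ by Fenchel--Moreau, the pullback formula \eqref{e:ktpullback} gives $\mam_t(B)=\lambda\bigl((\D\ptss)\inv(B)\bigr)$, so that up to the Lebesgue-null set where $\ptss$ fails to be differentiable, $\mam_t=(\nabla\ptss)_\sharp(\lambda\mres\bar\Omega)=(\nabla\ptss)_\sharp(\lambda\mres A)$. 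The decomposition of $\mam_t$ will then come from splitting the source set $A$ into $A\cap\intr{\Theta_t}$ and $A\setminus\intr{\Theta_t}$ and pushing forward by $\nabla\ptss$; the content is that on $A\cap\intr{\Theta_t}$ the map $\nabla\ptss$ agrees with $\nabla\psi_t$, which is locally rigid translation.

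Next I would handle the three claimed properties in turn. For (ii), on the interior of $\Theta_t$ the two convex functions $\psi_t$ and $\ptss$ coincide, hence have the same subgradients there, so $\D\ptss(y)=\D\psi_t(y)=\{y+t v_i\}$ for $y\in A_i\cap\intr{\Theta_t}$ by fact (2); this forces $\nabla\ptss=\nabla\psi_t=X_t$ on $A\cap\intr{\Theta_t}$, which is locally rigid translation, and it is injective there by Lemma~\ref{lem:injective}. So $\mu_t:=(\nabla\psi_t)_\sharp(\lambda\mres(A\cap\intr{\Theta_t}))=\lambda\mres\calB_t$ by translation invariance of Lebesgue measure, exactly as in the proof of Proposition~\ref{prop:mu_convex}, giving the absolutely continuous piece. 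For the singular piece, set $\nu_t:=\mam_t-\mu_t=(\nabla\ptss)_\sharp(\lambda\mres(A\setminus\intr{\Theta_t}))$, which by definition is carried by $\calS_t=\nabla\ptss(A\setminus\intr{\Theta_t})$, so $\nu_t=\mam_t\mres\calS_t$ once we know $\calS_t$ and $\calB_t$ are disjoint; that disjointness, property (i), should follow because a point of $\calB_t$ is a subgradient-image of a point where $\psi_t=\ptss$ and $\ptss$ is differentiable, while $\D\ptss$ assigns at such a point a singleton not attained by any $y$ in the "non-touching" region (one can argue via the inverse relation $(\D\ptss)\inv=\D\pts$: if $x\in\calB_t\cap\calS_t$ then $\D\pts(x)$ would contain both a point $y_0\in\intr{\Theta_t}$ and a point $y_1\notin\intr{\Theta_t}$, but then the whole segment $[y_0,y_1]\subset\D\pts(x)$ lies in $\bar\Omega$ and along it $\ptss$ is affine with slope $x$ — and near $y_0$ we have $\psi_t=\ptss$, so the affineness propagates, contradicting $y_1\notin\intr{\Theta_t}$; this needs to be made careful but is the right mechanism).

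The crux is property (iii): that $\calS_t$ is Lebesgue-null yet $\mam_t(\calS_t)>0$. The mass positivity $\mam_t(\calS_t)>0$ should be the easier half: since $\vp$ is non-convex, $\psi_t$ is non-convex on some $\Omega_\eps$ by Theorem~\ref{thm_semi} (applied on the convex subdomain as in Section~\ref{s:C1}), so $\psi_t\not\equiv\ptss$ there, and because $A$ is dense there is a positive-measure subset of $A$ lying outside $\intr{\Theta_t}$ — that set has positive Lebesgue measure and $\mam_t(\calS_t)=\nu_t(\calS_t)=\lambda(A\setminus\intr{\Theta_t})>0$ (one must check the pushforward doesn't lose all the mass, which it can't since $\nu_t$ is a genuine Borel measure of that total mass). **The hard part will be** $\lambda(\calS_t)=0$: one must show the image under $\nabla\ptss$ of the "non-touching" part of $A$ has measure zero. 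The natural route is to note that $\calS_t\subset\mathbb{R}^d\setminus\calB_t$ actually lands in the set of $x$ whose subgradient set $\D\pts(x)$ meets $\bar\Omega\setminus\intr{\Theta_t}$; for such $x$, $\D\pts(x)$ contains a point $y$ with $\psi_t(y)>\ptss(y)$, so $\ptss$ is not differentiable "in the direction of the touching set" — more precisely, since the touching set $\Theta_t$ is where $\ptss$ is determined by actual values of $\psi_t$ and off it $\ptss$ is obtained by convexification (affine interpolation), any exposed point/slope of $\ptss$ is attained only on $\Theta_t$. I would argue that $\nabla\ptss(y)$ for $y\in A\setminus\intr\Theta_t$ lies in the image $\nabla\ptss(\partial\Theta_t\cup(\text{lower-dim faces}))$, and then invoke a theorem on the measure of the image of the non-differentiability/non-exposed set — essentially that the set of gradients of a convex function $\ptss$ at non-exposed points has measure zero (a standard fact: the "useful" gradients of $\ptss$ all come from exposed points, which lie in $\Theta_t$, and the rest form a null set by e.g. an Alexandrov-type or Rademacher-type argument, cf. \cite[Lemma~4.1]{McCann97} and the reasoning in \cite{McCann97} used for uniqueness). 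This is the step I expect to require the most care, and I would isolate it as a lemma about convexifications: if $g=\psi_t$ and $g^{**}$ is its convex envelope on a compact convex set, then $\nabla g^{**}$ maps $\{g>g^{**}\}$ into a Lebesgue-null set of slopes.
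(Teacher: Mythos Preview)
Your overall architecture is sound and matches the paper's: split $A$ into $A\cap\intr{\Theta_t}$ and $A\setminus\intr{\Theta_t}$, identify $\nabla\ptss$ with the locally rigid map $\nabla\psi_t$ on the first piece to get $\mu_t=\lambda\mres\calB_t$, and push the second piece to $\calS_t$. Your treatment of (ii) and of $\mam_t(\calS_t)>0$ is correct. For (i), your segment argument can be made to work, but the stated contradiction (``affineness propagates, contradicting $y_1\notin\intr{\Theta_t}$'') is not quite the right one. The point is simpler: on a neighborhood of $y_0\in A\cap\intr{\Theta_t}$ one has $\ptss=\psi_t=\tfrac12|\cdot|^2+t(v_i\cdot\,+h_i)$, which is \emph{strictly} convex; but $\ptss$ is affine on the convex set $\D\pts(x)\ni y_0,y_1$, forcing $y_0=y_1$. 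No propagation is needed.

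The genuine gap is in your proof of $\lambda(\calS_t)=0$. You propose to work on the \emph{source} side, controlling the image of $\{\psi_t>\ptss\}$ via exposed points of $\ptss$ and a null-image lemma for non-exposed slopes. This is the wrong side of the duality, and the lemma you sketch (that $\nabla g^{**}$ maps $\{g>g^{**}\}$ into a null set of slopes) is not a standard fact you can simply cite; making it precise here would essentially reproduce the whole argument. The paper's route is much shorter and lives on the \emph{dual} side: one shows that for every $x\in\D\ptss(A\setminus\intr{\Theta_t})$, the set $\D\pts(x)$ is \emph{not} a singleton, hence $\pts$ fails to be differentiable at $x$. Since $\pts$ is convex (locally Lipschitz), Rademacher's theorem gives $\lambda(\calS_t)=0$ immediately. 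The non-singleton claim is proved via the Hopf--Lax formula \eqref{e:HLOmega}: a minimizer $y_*$ always lies in $\Theta_t\cap\D\pts(x)$ (Lemma~\ref{lem:HLargmin}), so if $y\in A\cap\Theta_t^c$ with $x\in\D\ptss(y)$ then $\{y,y_*\}\subset\D\pts(x)$ with $y\ne y_*$; and for $y\in A\cap\D\Theta_t$ one shows by a local perturbation argument (Proposition~\ref{p:3case}) that if $\D\pts(x)$ were a singleton then in fact $y\in\intr{\Theta_t}$. This dual non-differentiability is the key idea you are missing, and it also gives disjointness of $\calB_t$ and $\calS_t$ for free, since $\D\pts$ is single-valued on $\calB_t$ and multi-valued on $\calS_t$.
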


To proceed toward the proof of the Theorem~\ref{thm:MAt}
we relate $\psi_t^*$ to the function $u_t$ given by the
Hopf-Lax formula (1st Hopf formula)
\begin{equation}\label{e:HLOmega}
u_t(x) = \min_{z\in\bar\Omega} \frac{|x-z|^2}{2t} + \vp(z).
\end{equation}
We relate the touching sets to minimizers in this formula as follows.
First, note that by expanding the quadratic, we have 
\begin{equation}\label{e:pts_ut}
tu_t(x) + \pts(x) = \tfrac12|x|^2  \quad\text{for all $x\in\R^d$.}
\end{equation}
\begin{lemma}\label{lem:HLargmin}
Let $t>0$ and $x\in\R^d$. Then in the Hopf-Lax formula \eqref{e:HLOmega},
a point $y\in\bar\Omega$ is a minimizer if and only if 
$y\in\Theta_t\cap\D\pts(x)$.
\end{lemma}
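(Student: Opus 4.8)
The plan is to establish the claimed equivalence by unwinding the definitions of the Legendre transform and the subgradient $\partial\pts$, and exploiting the pointwise identity \eqref{e:pts_ut} together with the Fenchel-Young characterization of subgradients. Write $q(x,z)=\tfrac{1}{2t}|x-z|^2 + \vp(z)$ for the Hopf-Lax integrand, so $u_t(x)=\min_{z\in\bar\Omega}q(x,z)$. The key observation is that $y$ minimizes $q(x,\cdot)$ over $\bar\Omega$ exactly when $tu_t(x) = \tfrac12|x-y|^2 + t\vp(y)$, i.e. when $tu_t(x)+\tfrac12|y|^2 - x\mdot y = t\vp(y) + \tfrac12|y|^2 = \psi_t(y)$ (using $\psi_t = \psi_0 + t\vp$ and $y\in\bar\Omega$, so $\ind_{\bar\Omega}(y)=0$). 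By \eqref{e:pts_ut}, $tu_t(x) = \tfrac12|x|^2 - \pts(x)$, so this reads $\pts(x) + \psi_t(y) = x\mdot y$. That is the Fenchel-Young equality for the dual pair $(\pts,\psi_t^{**})$ paired with $(\pts,\psi_t^{***}) = (\pts,\pts)$ — but one must be careful: Fenchel-Young equality $\pts(x)+g(y)=x\mdot y$ with $g=\pts^* = \psi_t^{**}$ is equivalent to $y\in\D\pts(x)$. So the chain of equivalences is: $y$ minimizes $q(x,\cdot)$ $\iff$ $\pts(x)+\psi_t(y)=x\mdot y$ $\iff$ [$\pts(x)+\ptss(y)=x\mdot y$ \emph{and} $\psi_t(y)=\ptss(y)$] $\iff$ $y\in\D\pts(x)\cap\Theta_t$.

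\textbf{Steps.} First I would note the global inequality $\pts(x) + \psi_t(y) \ge x\mdot y$ for all $x,y\in\R^d$, which holds because $\psi_t\ge\ptss$ and $\pts(x)+\ptss(y)\ge x\mdot y$ (definition of Legendre transform, since $\pts = (\ptss)^*$ by Fenchel-Moreau as noted just before Definition~\ref{d:Thetat}). Second, I would use \eqref{e:pts_ut} to rewrite $\pts(x)+\psi_t(y) - x\mdot y = tu_t(x) + \tfrac12|x|^2 - x\mdot y + \tfrac12|y|^2 + t\vp(y) - \tfrac12|y|^2 + \ind_{\bar\Omega}(y)\cdot 0 = t\bigl(u_t(x) - q(x,y)\bigr)$ for $y\in\bar\Omega$, so that equality $\pts(x)+\psi_t(y)=x\mdot y$ with $y\in\bar\Omega$ is \emph{exactly} the statement $q(x,y)=u_t(x)$, i.e. $y$ is a Hopf-Lax minimizer. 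Third, I would observe that when the equality $\pts(x)+\psi_t(y)=x\mdot y$ holds, the sandwich $x\mdot y \le \pts(x)+\ptss(y)\le\pts(x)+\psi_t(y)=x\mdot y$ forces \emph{both} $\pts(x)+\ptss(y)=x\mdot y$ (which is $y\in\D\pts(x)$, since $\ptss = \pts^*$) \emph{and} $\ptss(y)=\psi_t(y)$ (which is $y\in\Theta_t$); here I should note $y\in\bar\Omega$ is automatic because $\psi_t(y)<\infty$. Conversely, if $y\in\Theta_t\cap\D\pts(x)$ then $y\in\bar\Omega$, and $\pts(x)+\psi_t(y)=\pts(x)+\ptss(y)=x\mdot y$, so by the second step $y$ is a minimizer. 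Finally I would remark that a minimizer exists for every $x$ since $\vp$ is continuous and $\bar\Omega$ is compact, so the lemma is not vacuous.

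\textbf{Main obstacle.} The only real subtlety is bookkeeping around the indicator $\ind_{\bar\Omega}$ and making sure the identity \eqref{e:pts_ut} is used in the correct direction: $\pts$ is the Legendre transform of $\psi_t$ (equivalently of $\ptss$), and one must keep straight that $\ptss = \pts^*$ so that $y\in\D\pts(x)$ unpacks as the Fenchel-Young equality $\pts(x) + \ptss(y) = x\mdot y$ rather than something involving $\psi_t$ directly. The ``touching'' condition $y\in\Theta_t$ is precisely what upgrades this to the equality with $\psi_t$ in place of $\ptss$, which in turn is what makes $y$ an actual minimizer of the Hopf-Lax integrand (as opposed to a minimizer of the convexified integrand). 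Everything else is a two-line sandwich argument, so I expect no genuine difficulty beyond stating the dual relations cleanly.
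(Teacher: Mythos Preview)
Your proposal is correct and follows essentially the same route as the paper: both arguments use the identity \eqref{e:pts_ut} together with the Fenchel--Young inequality $\pts(x)+\ptss(y)\ge x\mdot y$ and the pointwise bound $\ptss\le\psi_t$ to sandwich the Hopf--Lax minimizer condition between the two equalities $y\in\D\pts(x)$ and $\psi_t(y)=\ptss(y)$. (There is a sign slip in your displayed computation in the second step---the correct identity is $\pts(x)+\psi_t(y)-x\mdot y = t\bigl(q(x,y)-u_t(x)\bigr)$, not $t\bigl(u_t(x)-q(x,y)\bigr)$---but this does not affect the argument since you only use the characterization of when this quantity vanishes.)
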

\begin{proof}
Recall the Young inequality says
\[
x\mdot z \le \psi_t^*(x)+\psi_t^{**}(z)
= \tfrac12|x|^2-tu_t(x) + \ptss(z)
\]
for all $x$ and $z$, with equality 
when $z\in\D\psi_t^*(x)$ or equivalently $x\in\D\psi_t^{**}(z)$.
Since $\ptss\le \psi_t=\frac12|\cdot|^2+t\vp$, we find that for all $z\in\bar\Omega$,
\begin{align*}
tu_t(x) &\le \tfrac12|x-z|^2 -\tfrac12|z|^2 + \ptss(z)
\\& \le  \tfrac12|x-z|^2 + t\vp(z)
\end{align*}
If $z=y$ is a minimizer in \eqref{e:HLOmega} then equality holds in both inequalities here,
hence $\ptss(y)=\psi_t(y)$ and $y\in\D\pts(x)$. And the converse holds:
If $z=y\in \Theta_t\cap\D\pts(x)$ then equality holds in the Young inequality above,
and $\ptss(z)=\psi_t(z)=\frac12|z|^2+t\vp(z)$, and this implies that $y$
is a minimizer in \eqref{e:HLOmega}.
\end{proof}

\begin{lemma}\label{lem:diff}
Let $t>0$. If $y\in \Theta_t\cap\Omega$ and $\vp$ is differentiable at $y$ then 
$\D\ptss(y)$ is a singleton set containing only $x=y+t\nabla\vp(y)$.
\end{lemma}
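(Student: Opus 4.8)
The plan is to combine Lemma~\ref{lem:HLargmin} with the smoothness hypothesis at $y$. First I would observe that since $y\in\Theta_t\cap\Omega$, the point $y$ lies in $\Theta_t\cap\D\pts(x)$ for $x=\nabla\psi_t(y)$, provided we can show $x\in\D\pts(x)$... more precisely, we need $y\in\D\pts(x)$, which by the inverse relation $(\D\ptss)^{-1}=\D\pts$ (from \cite[Thm.~23.5]{Rockafellar}) is equivalent to $x\in\D\ptss(y)$. The goal is to show this set $\D\ptss(y)$ is the single point $x=y+t\nabla\vp(y)$, so I must show (a) that $y+t\nabla\vp(y)\in\D\ptss(y)$ and (b) that nothing else lies in $\D\ptss(y)$.

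For (a): since $\vp$ is differentiable at $y\in\Omega$, the function $\psi_t=\frac12|\cdot|^2+t\vp$ is differentiable at $y$ (as a finite-valued function on the open set $\Omega$), with $\nabla\psi_t(y)=y+t\nabla\vp(y)=:x_0$. Because $y\in\Theta_t$ we have $\ptss(y)=\psi_t(y)$, and since $\ptss\le\psi_t$ everywhere while the two agree at $y$, any subgradient of $\psi_t$ at $y$ is also a subgradient of $\ptss$ at $y$: indeed if $\ptss(z)\ge\psi_t(y)+x_0\mdot(z-y)$ fails we would contradict $\ptss\le\psi_t$ together with the supporting-plane inequality for $\psi_t$ near $y$... more carefully, $\psi_t(z)\ge\psi_t(y)+x_0\mdot(z-y)+o(|z-y|)$ is only a local statement, so instead I use the standard fact that for a convex lower-semicontinuous function the subgradient at a touching point of its convexification is inherited: from $\ptss(z)\le\psi_t(z)$ for all $z$ and differentiability of $\psi_t$ at $y$, one shows $x_0\in\D\ptss(y)$ by noting that if $p\in\D\ptss(y)$ then $p$ supports $\psi_t$ at $y$ in the local sense, forcing $p=\nabla\psi_t(y)=x_0$ — this simultaneously gives (b), so the real content is just: every element of $\D\ptss(y)$ equals $x_0$, and $\D\ptss(y)$ is nonempty because $y$ lies in the relative interior of the domain of the convex function $\ptss$ (as $y\in\Omega$, an open set on which $\ptss$ is finite).

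So the single clean argument I would write is: take any $p\in\D\ptss(y)$; then $\ptss(z)\ge\ptss(y)+p\mdot(z-y)$ for all $z$; using $\ptss(y)=\psi_t(y)$ and $\ptss\le\psi_t$ gives $\psi_t(z)\ge\psi_t(y)+p\mdot(z-y)$ for all $z$ in a neighborhood of $y$ in $\Omega$; dividing by $|z-y|$ and using differentiability of $\psi_t$ at $y$ forces $p=\nabla\psi_t(y)=y+t\nabla\vp(y)$. Nonemptiness of $\D\ptss(y)$ follows since $y\in\Omega\subset\mathrm{ri}(\mathrm{dom}\,\ptss)$. The main (minor) obstacle is simply being careful that the supergraph inequality for $\ptss$, which is global, can be localized against the local differentiability of $\psi_t$; once one restricts $z$ to a small ball inside $\Omega$ where $\psi_t$ is finite and smooth, the standard first-order argument closes it. No genuine difficulty is expected here — this lemma is a routine convex-analysis fact that feeds the measure-decomposition argument in Theorem~\ref{thm:MAt}.
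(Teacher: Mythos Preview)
Your proposal is correct and follows essentially the same route as the paper: pick any $p\in\D\ptss(y)$, use the touching condition $\ptss(y)=\psi_t(y)$ together with $\ptss\le\psi_t$ to see that $z\mapsto\psi_t(z)-p\mdot z$ (equivalently, $z\mapsto\tfrac12|z-p|^2+t\vp(z)$) is minimized at $y$, then invoke differentiability of $\vp$ at the interior point $y$ to force $p=y+t\nabla\vp(y)$. Your explicit mention of nonemptiness of $\D\ptss(y)$ (since $y\in\Omega$ is interior to the domain) is a small addition the paper leaves implicit.
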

\begin{proof}
Let $y\in\Theta_t\cap\Omega$. 
Then $\psi_t(z) \ge\ptss(z)$
for all $z$ with equality for $z=y$, 
so given any $x\in\D\ptss(y)$,  it follows
\[
\psi_t(z)-x\mdot z +\tfrac12|x|^2 \ge \psi_t(y)-x\mdot y +\tfrac12|x|^2 
\]
for all $z$ with equality for $z=y$. 
This means that $ \tfrac12|z-x|^2 + t\vp(z) $ is minimized at $z=y$. 
Since $\vp$ is differentiable at $y$, necessarily
$x=y+t\nabla\vp(y)$.
\end{proof}

The touching set $\Theta_t$ is a closed subset of $\bar\Omega$. 
Its (relative) complement is the non-touching set $\Theta_t^c=\bar\Omega\setminus\Theta_t$, 
which is (relatively) open. 
Then their common boundary $\D\Theta_t = \D\Theta_t^c$ is nowhere dense. 

\begin{prop}\label{p:3case}
Let $t>0$, $y\in A$ and $x\in\D\ptss(y)$. Then there are three cases:
\begin{itemize}
\item[(i)] If $y\in \Theta_t^c$ then $\D\pts(x)$ is not a singleton.
\item[(ii)] $y\in\intr{\Theta_t}$\ \ if and only if 
$\D\ptss(y)$ is a singleton set containing $x=y+t\nabla\vp(y)$
and $\D\pts(x)$ is a singleton containing $y$.
\item[(iii)] If $y\in\D\Theta_t$ then 
$\D\ptss(y)$ is a singleton set containing $x=y+t\nabla\vp(y)$
and $\D\pts(x)$ is not a singleton. 
\end{itemize}
\end{prop}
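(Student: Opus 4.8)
\textbf{Proof plan for Proposition~\ref{p:3case}.}
The plan is to leverage the two preceding lemmas (Lemma~\ref{lem:HLargmin} and Lemma~\ref{lem:diff}) together with the fact that $y\in A$, so $\vp$ is affine near $y$ with $\nabla\vp\equiv v_i$ on a neighborhood $U\subset A$ of $y$. Throughout I will use the inverse relation $(\D\ptss)\inv=\D\pts$ and the Hopf-Lax identification: by Lemma~\ref{lem:HLargmin}, for any $x$ the minimizers of $z\mapsto\frac{|x-z|^2}{2t}+\vp(z)$ over $\bar\Omega$ are exactly the points of $\Theta_t\cap\D\pts(x)$.

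First I would handle case (i): if $y\in\Theta_t^c$, then since $x\in\D\ptss(y)$ we have $y\in\D\pts(x)$, but $y\notin\Theta_t$, so $y$ is \emph{not} a minimizer in the Hopf-Lax formula at $x$. On the other hand, a minimizer $y'$ does exist in $\bar\Omega$ (the objective is continuous and $\bar\Omega$ compact), and $y'\in\Theta_t\cap\D\pts(x)$. Since $y\ne y'$ (one is in $\Theta_t$, the other isn't), $\D\pts(x)$ contains at least two points, hence is not a singleton. Next, for case (iii): if $y\in\D\Theta_t$, then in particular $y\in\Theta_t$ (it is closed), and $y\in A\subset\Omega$ with $\vp$ differentiable at $y$, so Lemma~\ref{lem:diff} gives that $\D\ptss(y)=\{x\}$ with $x=y+t\nabla\vp(y)$. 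For the non-singleton claim on $\D\pts(x)$: since $y\in\D\Theta_t$ there is a sequence $y_n\to y$ with $y_n\in\Theta_t^c\cap\bar\Omega$; I will argue that (shrinking into the affine neighborhood $U$ of $y$, where $\psi_t$ is a fixed quadratic plus a linear term, while $\ptss\le\psi_t$) there is a nearby point $y'\in\Theta_t$, $y'\ne y$, also lying in $\D\pts(x)$. Concretely, because $\psi_t$ is strictly convex on $U$ (it is $\frac12|\cdot|^2$ plus affine there) yet $\ptss$ coincides with $\psi_t$ at $y$ and is $\le\psi_t$, while points $y_n$ arbitrarily close to $y$ have $\ptss(y_n)<\psi_t(y_n)$, the graph of $\ptss$ over $U$ cannot be the single strictly-convex cap $\psi_t|_U$; its supporting hyperplane at $y$ must therefore touch $\ptss$ at some second point, or one passes to the boundary of $\Theta_t$ where touching persists — in either case $x=\nabla\psi_t(y)\in\D\ptss(y')$ for some $y'\ne y$, giving $y,y'\in\D\pts(x)$.

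For case (ii), the "only if" direction follows by combining the two halves: if $y\in\intr{\Theta_t}$ then Lemma~\ref{lem:diff} gives $\D\ptss(y)=\{x\}$ with $x=y+t\nabla\vp(y)$; and $\D\pts(x)$ is a singleton because any $y''\in\D\pts(x)$ is a point where $x\in\D\ptss(y'')$, i.e. $\ptss$ has supporting hyperplane of slope $x$ at $y''$ — I claim $y''$ must equal $y$. Here is where I use $y\in\intr{\Theta_t}$: on the open neighborhood $\intr{\Theta_t}\cap U$ of $y$, $\ptss=\psi_t=\frac12|\cdot|^2+v_i\mdot(\cdot)+c$ is strictly convex, so its subgradient map is injective there, pinning down $y''$ locally; globally, $\ptss$ being convex with the strictly-convex piece near $y$ forces $y''=y$ (a strictly convex function cannot have the same subgradient at two distinct points when one of them has a neighborhood of strict convexity — more precisely, if $x\in\D\ptss(y)\cap\D\ptss(y'')$ then $\ptss$ is affine on $[y,y'']$ by Lemma~\ref{l:affine}(i) applied to $\ptss$, contradicting strict convexity near $y$ unless $y''=y$). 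For the "if" direction: suppose $\D\ptss(y)=\{y+t\nabla\vp(y)\}=\{x\}$ and $\D\pts(x)=\{y\}$; I must show $y\in\intr{\Theta_t}$. Since $\D\pts(x)=\{y\}$ is a singleton, $\pts$ is differentiable at $x$ with $\nabla\pts(x)=y$; then by Lemma~\ref{lem:HLargmin} the unique minimizer in the Hopf-Lax formula at $x$ is $y$, so in particular $y\in\Theta_t$. To get that $y$ is \emph{interior}: for $x'$ near $x$, the minimizer $y(x')$ of the Hopf-Lax objective depends continuously on $x'$ (uniqueness at $x$ plus compactness gives upper semicontinuity, hence continuity at the unique point) and $y(x')\in\Theta_t$, and as $x'$ ranges over a neighborhood of $x$ the values $y(x')=\nabla\pts(x')$ — using that $\pts$ is differentiable on a neighborhood by the singleton hypothesis propagated via the inverse relation, or more carefully via $y(x')=x'-t\nabla u_t(x')$ — sweep out a neighborhood of $y$; since $y\in A$ and $\vp$ is affine near $y$, one checks the map $x'\mapsto y(x')$ is onto a neighborhood of $y$, putting a whole neighborhood of $y$ into $\Theta_t$.

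The main obstacle I anticipate is the "if" direction of (ii), specifically showing that the two singleton conditions \emph{force} $y$ to be an interior point of $\Theta_t$ rather than merely a boundary point — case (iii) shows that a boundary point of $\Theta_t$ also has $\D\ptss(y)$ a singleton of the stated form, so the singleton-ness of $\D\ptss(y)$ alone does not distinguish (ii) from (iii); the distinguishing feature is the singleton-ness of $\D\pts(x)$, and converting that into the openness of $\Theta_t$ near $y$ requires the continuous/open dependence of the Hopf-Lax minimizer on $x$ near a point of uniqueness, for which I would invoke the standard argument that a uniquely-attained continuous minimum over a compact set is stable, combined essentially with the inverse function behavior of $\nabla\pts$ near a point of differentiability. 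A secondary technical point is the argument in case (iii) that a genuine boundary point of $\Theta_t$ yields a \emph{second} touching point with the same supporting slope; this I would make rigorous by the strict convexity of $\psi_t$ on the affine neighborhood $U\cap A$ of $y$ and the relation $\ptss\le\psi_t$ with equality at $y$ but strict inequality arbitrarily nearby, invoking Lemma~\ref{l:affine}(i) to rule out the alternative.
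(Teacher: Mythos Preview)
Your treatment of cases (i) and (ii) is essentially the same as the paper's, modulo detail. For (i) you use the Hopf--Lax minimizer exactly as the paper does. For the ``only if'' direction of (ii), your segment argument (if $x\in\D\ptss(y)\cap\D\ptss(y'')$ then $\ptss$ is affine on $[y,y'']$) is correct, though note that Lemma~\ref{l:affine}(i) as stated assumes differentiability at both endpoints and you only have it at $y$; you need the easy subgradient version of that fact. For the ``if'' direction of (ii), your stability-of-argmin idea is the right one and matches the paper's strategy; the paper carries it out explicitly by fixing a gap $\gamma>0$ for $H_x$ outside a $\delta$-ball around $y$, then computing directly that for $|p|$ small the global minimizer of $H_{x+p}$ over $\bar\Omega$ is exactly $y+p\in A_i$, whence $y+p\in\Theta_t$. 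Your ``onto a neighborhood'' phrasing compresses the same computation.

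The genuine gap is your direct argument for (iii). You claim that because $\ptss=\psi_t$ at $y$ but $\ptss<\psi_t$ at nearby points $y_n$, the supporting hyperplane of $\ptss$ at $y$ ``must therefore touch $\ptss$ at some second point.'' This inference is false without further input: a convex function can lie strictly below a strictly convex cap except at one point while its tangent plane there still touches only at that point (take $\ptss(z)-\ptss(y)=\tfrac12|z-y|^2$ versus $\psi_t(z)-\psi_t(y)=|z-y|^2$ locally; the tangent at $y$ meets $\ptss$ only at $y$). Neither the strict convexity of $\psi_t$ on $U$ nor Lemma~\ref{l:affine}(i) rules this out; what actually rules it out is precisely the ``if'' direction of (ii), which you have not yet proved at that stage of your argument.

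The fix is structural: derive (iii) \emph{from} (i) and (ii), as the paper does. Once (i) and (ii) are established, (iii) is the only remaining case: $y\in\D\Theta_t\subset\Theta_t$ together with Lemma~\ref{lem:diff} gives $\D\ptss(y)=\{x\}$ with $x=y+t\nabla\vp(y)$; and if $\D\pts(x)$ were also a singleton, the ``if'' direction of (ii) would force $y\in\intr{\Theta_t}$, a contradiction. So reorder your proof to complete both directions of (ii) first, then read off (iii) in one line.
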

\begin{proof}
1. Suppose $y\in A\cap\Theta_t^c$ and $x\in \D\ptss(y)$. 
Let $y_*\in\bar\Omega$ be a minimizer in the Hopf-Lax formula~\eqref{e:HLOmega}.
Then by Lemma~\ref{lem:HLargmin}, $y_*\in\Theta_t\cap\D\pts(x)$.
But since $y\in\D\pts(x)$ also, $\D\pts(x)$ is not a singleton.
This proves (i).

2. For both parts (ii) and (iii), note that if $y\in A\cap\Theta_t$ then $\vp$ is
differentiable at $y$, so by Lemma~\ref{lem:diff} we have 
$\D\ptss(y)=\{x\}$ with $x=y+t\nabla\vp(y)$.

3. Suppose next that $y\in A\cap\intr{\Theta_t}$.
 Note that in some neighborhood of $y$, $\vp$ is affine
and we have that 
\begin{equation}\label{e:ptssOm}
\psi_t^{**}(z) = \psi_t(z)= \frac12|z|^2+t\vp(z)\,,
\end{equation}
which is strictly convex and quadratic.  
Thus hyperplanes with slope $x$ that support the graph of $\ptss$ at $y$ 
cannot touch it at any other point, so $\D\psi_t^*(x)$ must be a singleton,
and the singleton is $\{y\}$.

4. Now assuming that $y\in A_i$ (so $\nabla\vp(y)=v_i$), that $\D\ptss(y)=\{x\}$ 
where $x=y+tv_i$, and that $\D\pts(x)=\{y\}$, we wish to show $y\in\intr{\Theta_t}$.

By part (i), necessarily $y\in\Theta_t$, and by Lemma~\ref{lem:HLargmin},
$z=y$ is the unique minimizer in the Hopf-Lax formula~\ref{e:HLOmega}.
For any $p\in\R^d$ given, define
\[
H_p(z) := \frac{|p-z|^2}{2t} +\vp(z)\,.
\]
Then $z=y$ is the unique minimizer of $H_x$ in $\bar\Omega$, and $H_x(y)=u_t(x)$. 
Choosing $\delta>0$ so that
$z\in A_i$ whenever $|z-y|<\delta$,  since $H_x$ is continuous
on the compact set $\bar\Omega$ with unique minimizer at $y$,
we necessarily have 
\begin{equation}\label{e:Hxmin}
\min\{H_x(z) :
|z-y|\ge\delta,\,z\in\bar\Omega\} = u_t(x) + \gamma \quad\text{where $\gamma>0$}.
\end{equation}

We claim that if
$|p|>0$ is sufficiently small then $H_{x+p}(z)$ is globally minimized at $y+p$.
By Lemma~\ref{lem:HLargmin} this means $y+p\in \Theta_t$, and $y\in\intr{\Theta_t}$
will follow.
To prove the claim, note that for all $z$, 
\begin{equation}\label{e:Hxp}
H_{x+p}(z) = H_x(z) + \frac{p\mdot(x-z)}t + \frac{|p|^2}{2t} \,.
\end{equation}
Note that $\vp$ takes the form $\vp(z)=v_i\mdot z+h_i$ in the open set $A_i$.
Thus 
we have 
\[
H_{x+p}(z)=\tilde H(z)
\quad\text{for all $z\in A_i$},
\]
where we define $\tilde H$ to be the quadratic function given by
\[
\tilde H(z) := 
 \frac{|x-z+p|^2}{2t} + v_i\mdot z + h_i\quad
\text{for all $z\in\R^d$.}
\]
The global minimum of $\tilde H$ is at $z=x-v_it+p=y+p$.
Provided $|p|<\delta$, this point lies in $A_i$, so the minimum 
of $H_{x+p}(z)$ within $A_i$ 
takes the value 
\[ 
\min_{z\in A_i} H_{x+p}(z)= \tilde H(y+p) = 
H_{x+p}(y+p)=  
H_x(y) + v_i\mdot p \,.
\]
Provided $|v_i||p|<\frac12\gamma$ also, this value
$H_x(y) + v_i\mdot p  < u_t(x)+\tfrac12\gamma$.
On the other hand,
since $x=y+tv_i$,
from \eqref{e:Hxp} and \eqref{e:Hxmin} we find 
that whenever $z\in \bar\Omega\setminus A_i$, 
\begin{align*}
    H_{x+p}(z) \ge u_t(x)+\gamma + \frac{p\mdot(y-z)}t \,. 
\end{align*}
Thus, if $|p|\diam\Omega< \frac12\gamma t$ also, then
\[
H_{x+p}(z) \ge  
u_t(x)+\tfrac12\gamma 
\quad\text{for all $z\in \bar\Omega\setminus A_i$.}
\]
This proves the claim, and finishes the proof of (ii).

5. Part (iii) follows from parts (i) and (ii) as the remaining case.
\end{proof}
Before beginning the proof of Theorem~\ref{thm:MAt} we recall that 
a function $f$, convex on $\R^d$ and finite at $x$, is differentiable at $x$
if and only if $\D f(x)$ is a singleton \cite[Thm. 25.1]{Rockafellar}.

\begin{proof}[Proof of Theorem~\ref{thm:MAt}]
1. On each component $A_i$ of $A$, recall $\nabla\psi_t$ 
is given by rigid translation, $\nabla\psi_t(y)= y+tv_i$.
Moreover, on $A\cap\intr{\Theta_t}$, $\ptss=\psi_t$ is strictly convex
so $\nabla\ptss=\nabla\psi_t$ on this open set and is injective there.
The set $\calB_t=\nabla\psi_t(A\cap\intr{\Theta_t})$ 
is then a disjoint union of open sets 
\[
\calB_t = \bigsqcup_i B_i\,,
\quad\text{where}\quad
B_i=\nabla\psi_t(A_i\cap \intr{\Theta_t}) = (A_i\cap\intr{\Theta_t}) + tv_i.
\]
For each $x\in B_i$, $x=y+tv_i = \nabla\ptss(y)$ where $y\in A_i\cap\intr{\Theta_t}$.
So by part (ii) of Prop.~\ref{p:3case},  
$\D\pts(x)$ is the singleton $\{y\}$, hence $\pts$ is differentiable at $x$ with 
$\nabla\pts(x)= x- t v_i$.
Given any Borel set $B\subset\calB_t$, then 
\[
\mam_t(B)= \sum_i \lambda(\nabla\pts(B\cap B_i)) = 
\sum_i \lambda(B\cap B_i-tv_i) = \lambda(B).
\]
Thus $\mam_t\mres\calB_t=\lambda\mres\calB_t$.
\smallskip

2. For each point $x\in\hat\calS_t:=\D\ptss(A\setminus\intr{\Theta_t})$ we have $x\in\D\ptss(y)$ for 
some $y\in A\setminus\intr{\Theta_t}$.
By parts (i) and (iii) of Prop.~\ref{p:3case}, $\D\pts(x)$ is not a singleton.
Thus $\pts$ is not differentiable at any point of $\hat\calS_t$.
As $\pts$ is convex, hence locally Lipschitz, 
we must have $\lambda(\hat\calS_t)=0$ by Rademacher's theorem.
\smallskip

3. By step 1, $\D\pts$ is single-valued on $\calB_t$.
Since $\D\pts(x)$ cannot be both singleton and non-singleton,
$\calB_t$ and $\hat\calS_t$ are disjoint.
Moreover, 
\[
\D\pts(\calB_t) = A\cap\intr{\Theta_t} \quad\text{ and }\quad
\D\pts(\hat\calS_t)\supset  A\setminus\intr{\Theta_t}.
\]
Since $\D\pts(x)\subset\bar\Omega$ for any $x\in\R^d$ 
and $A$ has full measure in $\bar\Omega$,
\begin{align*}
\mam_t(\R^d) &\le \lambda(\bar\Omega) = 
\lambda(A\cap\intr{\Theta_t})
+\lambda(A\setminus\intr{\Theta_t})
\\ &\le \lambda(\D\pts(\calB_t))+\lambda(\D\pts(\hat\calS_t))
\\ & = \mam_t(\calB_t)+\mam_t(\hat\calS_t) \le \mam_t(\R^d).
\end{align*}
Hence equality holds throughout, whence we get the Lebesgue decomposition
\[
\mam_t = \kappa_t\mres\calB_t+\kappa_t\mres\hat\calS_t.
\]
4.  Let $B=\hat\calS_t\setminus\calS_t$ 
with $\calS_t=\nabla\ptss(A\setminus\intr{\Theta_t})$
and let $\hat A = (\D\ptss)\inv(B)=\D\pts(B)$.
By definition of $\calS_t$, $\ptss$ is not differentiable  at any point of 
$\hat A\cap (A\setminus\intr{\Theta_t})$. 
Further, by step 2, $\pts$ is not differentiable at any point of $B$, so  
$\hat A\cap(A\cap\intr{\Theta_t})$ is empty due to Prop.~\ref{p:3case}(ii).
Hence  $\hat A\cap A = \hat A\cap (A\setminus\intr{\Theta_t})$.
Then each point of $\hat A$ is either a point where $\ptss$ is not differentiable
or is in $A^c$.
Hence
by \eqref{e:ktpullback},
\[
\lambda(\hat A) = 
\mam_t(\hat\calS_t\setminus\calS_t)=0,
\]
whence  $\mam_t\mres\hat\calS_t=\mam_t\mres\calS_t$. 

Moreover, $\mam_t(\calS_t)>0$ since 
the non-touching set $\Theta_t^c$ is relatively open in $\bar\Omega$
and so the open set $A\cap\Theta_t^c\subset A\setminus\intr{\Theta_t}$
is non-empty.
\end{proof}

\begin{remark}
Our results in this section can be compared with work in cosmology by Frisch~\etal\ \cite{frisch2002reconstruction}
and Brenier~\etal\ \cite{brenier2003} that uses the adhesion model for cosmological reconstruction.
    In these works the authors use optimal transportation to determine 
    an initial velocity potential for matter flow in a large region of the universe,
    from presumed mass distributions at two epochs of a time-like variable.
    Without getting into details, the adhesion model takes the velocity potential essentially as
    the viscosity solution $u$ of \eqref{e:HJ1}, the zero-viscosity limit of the potential Burgers equation, and 
    the primordial mass density as uniform. 
    The present distribution of cold dark matter is inferred from observations
    and exhibits concentrations such as mass sheets, filaments and nodes,
    and appears to be taken to correspond to the \MA\ measure $\kappa_t$.
    
    As discussed in \cite{brenier2003}, optimal transport in principle can determine
    only the convexified potential ($\ptss$ here) whose gradient pushes the initial uniform distribution
    forward to $\kappa_t$, and  the original velocity can be inferred only at points
    outside of mass concentrations at the present time.  
    
    In Theorem~\ref{thm:monge_ampere} above, this compares to points in $\calB_t$, 
    the set where the absolutely continuous part of $\kappa_t$ is concentrated.
    Naturally, our assumption that the initial velocity potential is locally affine is
    not suitable for cosmology.
\end{remark} 
\begin{remark} 
    A more general related result exists that describes rigorously how the 
   Lebesgue decomposition  of \MA-like measures is determined in terms of the Alexandrov Hessian of the transport potential. 
   See Remark~7.4 in the lecture notes of Ambrosio~\etal~\cite{AmbrosioBrueSemola2021}.
   One can alternatively prove Theorem~\ref{thm:MAt}
   by using the result of that Remark together with the results of Proposition~\ref{p:3case} above,
   but we retain the arguments above for simplicity.
\end{remark} 

%\pagebreak %s------------------------------------------

\section{Stability and approximation of rigidly breaking flows}
\label{s:stability}

For the rigidly breaking potential flows provided by 
Theorem~\ref{thm_countable}, the countable Alexandrov theorem,
a natural question that arises is whether and in what sense
the flow produced depends continuously on the mass-velocity data,
particularly in the absence of a moment assumption.
In this section we provide a stability theorem that addresses this issue.

Recall from Remark~\ref{r:pp} that sets of mass-velocity data $\{(m_i,v_i)\}$
for which Theorem~\ref{thm_countable} applies are in bijective correspondence
with pure point measures $\nu$ on $\R^d$ having $\nu(\R^d)=\lambda(\Omega)$.
A natural notion of stability of the flows determined by such data 
involves weak-star convergence of measures in $\calM(\R^d)=C_0(\R^d)^*$,
the space of finite signed Radon measures on $\R^d$.

\begin{theorem}\label{thm_stability}
    Let $\Omega\subset\R^d$ be a bounded convex open set with $\lambda(\Omega)=1$.
    For each $n\in\N\cup\{\infty\}$ let $\nu_n$ be a pure point probability measure on $\R^d$. 
   Let $\vp_n$ be the potential 
    associated with $\nu_n$ in the proof of Theorem~\ref{thm_countable}, and
    let $A^n$ be the open set in $\Omega$ given by \eqref{d:Aset} with $\vp_n$ replacing $\vp$.
    Let $X^n_t=\id+t\nabla\vp_n$ be the corresponding flow map, and
    also let $\mam^n_t= \lambda\mres X^n_t(A^n)$. 
    
    If $\nu_n\wkto \nu_\infty$ as $n\to\infty$  weak-$*$ in  $\calM(\R^d)$,
    then 
    $\mam^n_t\wkto \mam^\infty_t$ weak-$*$ in $\calM(\R^d)$ 
    for each $t>0$.
    \end{theorem}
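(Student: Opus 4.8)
The plan is to prove the weak-$*$ convergence $\mam^n_t\wkto\mam^\infty_t$ by
combining two ingredients: tightness of the family $\{\mam^n_t\}_n$, which gives
precompactness in the weak-$*$ topology, and identification of every weak-$*$
limit point as $\mam^\infty_t$.  For the first ingredient, I would use the
representation $\mam^n_t=\lambda\mres X^n_t(A^n)=(X^n_t)_\sharp(\lambda\mres\Omega)$
(from the reasoning around Lemma~\ref{lem:injective} and the Cantor-function
discussion), so that $\mam^n_t$ has total mass $1$ for every $n$, and then
control the supports.  The support of $\mam^n_t$ is contained in
$\overline{\Omega}+t\,\mathrm{supp}(\nu_n)$, and since $\nu_n\wkto\nu_\infty$ a
standard argument shows $\{\nu_n\}$ is tight, so for any $\eps>0$ there is a
compact $K\subset\R^d$ with $\nu_n(\R^d\setminus K)<\eps$ for all $n$; pushing
this through the flow bound $|X^n_t(z)-z|=t|\nabla\vp_n(z)|$ and the fact that
$\nabla\vp_n(z)\in\mathrm{supp}(\nu_n)$ for a.e.\ $z$ gives
$\mam^n_t\bigl(\R^d\setminus(\overline\Omega+tK)\bigr)<\eps$ uniformly in $n$.
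Hence $\{\mam^n_t\}$ is tight, and being bounded in total variation it is
weak-$*$ precompact in $\calM(\R^d)$.

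Next I would identify the limit.  By precompactness it suffices to show that
every weak-$*$ convergent subsequence has limit $\mam^\infty_t$; relabel so that
$\mam^n_t\wkto\sigma$ and aim to prove $\sigma=\mam^\infty_t$.  Here the natural
route is to test against $f\in C_0(\R^d)$ and use the change of variables
$\int f\,d\mam^n_t=\int_\Omega f(z+t\nabla\vp_n(z))\,dz$.  So the crux is to
show that $\nabla\vp_n\to\nabla\vp_\infty$ in a strong enough sense on $\Omega$,
e.g.\ in $L^1_{\mathrm{loc}}(\Omega)$ or in measure.  This should follow from
stability of Brenier/McCann optimal maps under weak-$*$ convergence of the
target measures: $\nabla\vp_n$ is the ($\lambda\mres\Omega$-a.e.\ unique) optimal
transport map from $\lambda\mres\Omega$ to $\nu_n$, the potentials $\vp_n$ are
uniformly Lipschitz on compact subsets of $\Omega$ (with constant controlled by
the tightness radius of $\nu_n$), so up to a subsequence $\vp_n\to\vp$ locally
uniformly with $\vp$ convex; the limit map $\nabla\vp$ then pushes
$\lambda\mres\Omega$ forward to $\nu_\infty$ (weak-$*$ limits of push-forwards),
and by uniqueness in McCann's theorem $\nabla\vp=\nabla\vp_\infty$ a.e., which
also pins down $\vp_\infty$ up to a constant so that the full sequence converges.
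Local uniform convergence of convex functions upgrades to
$\nabla\vp_n\to\nabla\vp_\infty$ in $L^1_{\mathrm{loc}}$ (convergence of
gradients of convex functions a.e.\ where the limit is differentiable, plus
domination on compacts), hence in measure on $\Omega$.

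Finally, with $\nabla\vp_n\to\nabla\vp_\infty$ in measure on $\Omega$ and a
uniform (in $n$, on compact subsets of $\Omega$ minus a small collar)
integrable bound, the maps $X^n_t=\id+t\nabla\vp_n$ converge in measure to
$X^\infty_t$ on $\Omega$; then for any $f\in C_0(\R^d)$, $f\circ X^n_t\to
f\circ X^\infty_t$ in measure, and since these are uniformly bounded on the
finite-measure set $\Omega$, dominated convergence gives
$\int_\Omega f\circ X^n_t\,d\lambda\to\int_\Omega f\circ X^\infty_t\,d\lambda$,
i.e.\ $\int f\,d\mam^n_t\to\int f\,d\mam^\infty_t$.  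Thus $\sigma=\mam^\infty_t$,
completing the proof.  I expect the main obstacle to be the stability of the
optimal maps \emph{without a uniform moment assumption}: the targets $\nu_n$ may
have unbounded or badly behaved second moments, so one cannot invoke the
standard $W_2$-stability theory directly; the fix is to work with tightness and
local uniform Lipschitz bounds on the potentials (which hold because
$\nabla\vp_n$ takes values in $\mathrm{supp}\,\nu_n$, and tightness makes these
supports "essentially bounded" in a quantified sense) rather than with global
moments, and to carry the small-collar/finite-measure argument carefully so that
the at-most-$\eps$ mass escaping to infinity in $\nu_n$ does not spoil the test
against $C_0$ functions.
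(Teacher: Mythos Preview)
Your strategy is sound and leads to a valid proof, but the route to stability of the transport maps differs from the paper's. The paper isolates this step as a separate result (Theorem~\ref{thm_McCann}): it passes to weak-$*$ subsequential limits of the couplings $\gamma_n=(\id\times\nabla\vp_n)_\sharp\mu$, uses lemmas from McCann~\cite{McCann95} and Rockafellar's theorem to show any limit coupling is supported in the subdifferential of some convex $\psi$ with $\nabla\psi_\sharp\mu=\nu_\infty$, identifies $\nabla\psi=\nabla\vp_\infty$ by uniqueness, and then cites a theorem from \cite{AmbrosioBrueSemola2021} to convert weak-$*$ convergence of plans into convergence of maps in $\mu$-measure. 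You instead work directly with the potentials: uniform local Lipschitz bounds plus Arzel\`a--Ascoli give subsequential locally uniform convergence of the (normalized) $\vp_n$, and then the standard fact that gradients of locally uniformly convergent convex functions converge a.e.\ does the rest. This is arguably more self-contained---no cyclic-monotonicity machinery or external stability citation---but the uniform Lipschitz bound you assert needs a real argument, not just ``$\nabla\vp_n$ takes values in $\mathrm{supp}\,\nu_n$'': the supports need not be bounded. The correct mechanism is that if $|\nabla\vp_n(z_0)|$ were large at some $z_0$ in a fixed compact $K\subset\Omega$, monotonicity of $\nabla\vp_n$ forces $|\nabla\vp_n|$ to be comparably large on a cone from $z_0$ of $\mu$-measure bounded below independently of $n$, and the push-forward relation $\nu_n=(\nabla\vp_n)_\sharp\mu$ then contradicts tightness of $\{\nu_n\}$. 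The final passage from convergence in measure of $X^n_t$ to weak-$*$ convergence of $\kappa^n_t$ via dominated convergence is essentially the same in both proofs; your preliminary tightness argument for $\{\kappa^n_t\}$ is correct but unnecessary once convergence in measure of the maps is in hand.
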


    The basis of the proof is the following result,
    which provides a stability theorem for the transport maps 
    provided by McCann's main theorem in \cite{McCann95}.
    This result is unlikely to be new, but we were unable to locate a precise reference.
    It is closely related to well-known stability results for transport maps in optimal transport theory ---
    see Corollary 5.23 in Villani's book~\cite{Villani_OldNew}, e.g.
    The result of that Corollary does not apply here, however, because
    we make no assumptions regarding optimality or bounded moments for the measures $\nu_n$.

\begin{theorem}\label{thm_McCann}
    Let $\mu$ be a probability measure 
    on $\R^d$ absolutely continuous with respect to Lebesgue measure $\lambda$.
    For each $n\in\N\cup\{\infty\}$, let $\nu_n$ be a probability measure on $\R^d$,
    and let $\vp_n:\R^d\to\R\cup\{\infty\}$ be a convex function as given by McCann's main theorem in \cite{McCann95}.
    If $\nu_n\wkto\nu_\infty$ weak-$*$ as $n\to\infty$,
    then $\nabla\vp_n$ converges to $\nabla\vp_\infty$ in $\mu$-measure on $\R^d$.
\end{theorem}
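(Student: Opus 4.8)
The plan is to lift the maps $T_n:=\nabla\vp_n$ to transport plans, extract a narrow limit, identify it via McCann's uniqueness, and then descend back to convergence in $\mu$-measure.

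First I would upgrade the weak-$*$ hypothesis to narrow convergence. Since $\nu_n$ and $\nu_\infty$ are probability measures and $\nu_n\wkto\nu_\infty$ in $C_0(\R^d)^*$, no mass escapes to infinity: testing against $\phi\in C_c(\R^d)$ with $\one_{B_R}\le\phi\le\one_{B_{2R}}$ and using $\nu_\infty(B_R)\to1$ shows $\{\nu_n\}$ is tight, hence $\nu_n\to\nu_\infty$ narrowly (against all of $C_b$). Next, set $\pi_n:=(\id\times T_n)_\sharp\mu$, a probability measure on $\R^d\times\R^d$ with first marginal $\mu$ and second marginal $(T_n)_\sharp\mu=\nu_n$. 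Because $\mu$ is a fixed probability measure and $\{\nu_n\}$ is tight, $\{\pi_n\}$ is tight, so by Prokhorov's theorem every subsequence of $\{\pi_n\}$ has a further subsequence along which $\pi_{n_k}\rightharpoonup\pi$ narrowly, where $\pi$ is a probability measure with first marginal $\mu$ and second marginal $\nu_\infty$.

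The heart of the argument --- and the step I expect to be the main obstacle, since without moment bounds on the $\nu_n$ the standard optimal-transport stability theorems (e.g.\ \cite[Cor.~5.23]{Villani_OldNew}) do not apply --- is to show that $\pi$ is concentrated on the graph of the subdifferential of a convex function. Each $\pi_n$ is concentrated on $\partial\vp_n$, which is cyclically monotone since $\vp_n$ is convex; equivalently, for the \emph{continuous} quadratic cost $c(x,y)=\tfrac12|x-y|^2$, each $\pi_n^{\otimes N}$ is concentrated on the closed set
\[
C_N=\Bigl\{\bigl((x_1,y_1),\dots,(x_N,y_N)\bigr):\ \textstyle\sum_{i=1}^N c(x_i,y_i)\le\sum_{i=1}^N c(x_i,y_{i+1}),\ \ y_{N+1}:=y_1\Bigr\}.
\]
Since $\pi_{n_k}\rightharpoonup\pi$ narrowly implies $\pi_{n_k}^{\otimes N}\rightharpoonup\pi^{\otimes N}$ narrowly, the portmanteau theorem for closed sets gives $\pi^{\otimes N}(C_N)=1$; hence $(\mathrm{supp}\,\pi)^N=\mathrm{supp}(\pi^{\otimes N})\subseteq C_N$ for every $N$, i.e.\ $\Gamma:=\mathrm{supp}\,\pi$ is $c$-cyclically monotone and $\pi(\Gamma)=1$. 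By Rockafellar's theorem, $\Gamma\subseteq\partial\vp$ for some convex $\vp\colon\R^d\to\R\cup\{\infty\}$. Only the continuity of the cost is used here; no optimality or moment assumption enters, which is exactly why one cannot just cite the off-the-shelf stability results.

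Finally, since $\mu\ll\lambda$, the convex function $\vp$ is finite and differentiable $\mu$-a.e., so $\pi=(\id\times\nabla\vp)_\sharp\mu$ and $(\nabla\vp)_\sharp\mu=\nu_\infty$; by the uniqueness part of McCann's theorem in \cite{McCann95}, $\nabla\vp=\nabla\vp_\infty$ $\mu$-a.e., hence $\pi=(\id\times T_\infty)_\sharp\mu=:\pi_\infty$. As every subsequential narrow limit of the tight sequence $\{\pi_n\}$ is the same measure $\pi_\infty$, the whole sequence converges, $\pi_n\rightharpoonup\pi_\infty$ narrowly. To descend to convergence in $\mu$-measure, given $\eps>0$ I would use Lusin's theorem to pick a compact $K$ with $\mu(\R^d\setminus K)<\eps$ and $T_\infty|_K$ continuous, extend $T_\infty|_K$ to $\tilde T\in C(\R^d,\R^d)$ by Tietze, and test $\pi_n\rightharpoonup\pi_\infty$ against the bounded continuous function $(x,y)\mapsto\min(1,|y-\tilde T(x)|)$ to obtain $\int\min(1,|T_n-\tilde T|)\,d\mu\to\int\min(1,|T_\infty-\tilde T|)\,d\mu\le\mu(\R^d\setminus K)<\eps$; combined with Markov's inequality and $\mu(\tilde T\ne T_\infty)<\eps$ this yields $\limsup_n\mu(\{|T_n-T_\infty|>\delta\})\le C\eps/\delta$ for each $\delta\in(0,1)$, and letting $\eps\downarrow0$ then $\delta\downarrow0$ gives $\nabla\vp_n=T_n\to T_\infty=\nabla\vp_\infty$ in $\mu$-measure.
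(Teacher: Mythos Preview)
Your argument is correct and follows essentially the same route as the paper's proof: lift the maps to transport plans, pass to a subsequential limit, show cyclic monotonicity survives so Rockafellar and McCann's uniqueness identify the limit as $(\id\times\nabla\vp_\infty)_\sharp\mu$, then descend to convergence in $\mu$-measure. The only difference is packaging: where you unpack the tightness/marginal identification, the preservation of cyclic monotonicity under narrow limits, and the Lusin--Tietze passage from plan convergence to convergence in measure, the paper instead cites McCann's Lemma~9(i)--(ii) and Proposition~10 from \cite{McCann95} together with Theorem~6.12 of \cite{AmbrosioBrueSemola2021}.
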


\begin{proof}
   The coupling defined by
   $\gamma_n=(\id\times\nabla\vp_n)_\sharp\mu$ has marginals $\mu$ and $\nu_n$.
   These couplings are probability measures on $\R^d\times\R^d$, so by the Banach-Alaoglu
   theorem, any subsequence has a further subsequence that converges weak-$*$ to some measure
   $\gamma\in \calM(\R^d\times\R^d)$.  Since we assume that $\nu_n$ converges weak-$*$
   to $\nu_\infty$, by Lemma~9(ii) of \cite{McCann95} we infer that the limit
   measure $\gamma$ is a probability measure coupling $\mu$ and $\nu_\infty$.
   Lemma~9(i) of \cite{McCann95} implies the support of $\gamma$ is cyclically monotone
   in the sense of McCann's Definition 3, hence, as McCann states, a theorem of Rockafellar
   implies that the support of $\gamma$ is contained in the subdifferential of some convex
   function $\psi$ on $\R^d$. Next, by Proposition~10 of \cite{McCann95}, 
   the gradient of $\psi$ pushes $\mu$ forward to $\nu_\infty$, 
   i.e., $\nabla\psi_\sharp\mu=\nu_\infty$. 
   
   By the uniqueness part of McCann's main theorem in \cite{McCann95}, it follows that 
   $\nabla\psi=\nabla\vp_\infty$ $\mu$-a.e.~in $\R^d$.
   Thus we can say the coupling $\gamma = (\id\times\nabla\vp_\infty)_\sharp\mu$.
   Since this limit measure $\gamma$ is unique, the full sequence $\gamma_n$
   converges to it.

   The last step of the proof is to invoke Theorem~6.12 on stability of transport maps 
   in \cite{AmbrosioBrueSemola2021}, which states that
   in this situation, the weak-$*$ convergence of $\gamma_n$ to $\gamma$ is equivalent
   to the convergence of $\nabla\vp_n$ to $\nabla\vp_\infty$ in the sense of $\mu$-measure
   on $\R^d$. This finishes the proof of Theorem~\ref{thm_McCann}.
    \end{proof}

\begin{proof}[Proof of Theorem~\ref{thm_stability}]
Make the assumptions stated in the Theorem. 
  For each $n\in\N\cup\{\infty\}$, the transport map $X^n_t = \id + t\nabla\vp_n$
  is well-defined on the set $A^n$.
   Let $\mu=\lambda\mres\Omega$, and recall from the proof of Theorem~\ref{thm_countable}
   that $\vp_n$ is a potential associated with $\nu_n$ by McCann's main theorem in \cite{McCann95}.
  For any $t>0$ fixed, evidently  it follows from Theorem~\ref{thm_McCann}  that
  $X^n_t$ converges to $X^\infty_t$ in $\mu$-measure as $n\to\infty$.

  Next,
  recall from Proposition~\ref{prop:mu_convex} that the pushforward measure
  \[
  (X^n_t)_\sharp\mu = \lambda\mres X^n_t(A^n)=\kappa^n_t.
  \]
  In order to prove $\mam^n_t$ converges to
  $\mam^\infty_t$ weak-$*$ on $\R^d$, we should prove that for any continuous function
  $f$ on $\R^d$ that vanishes at $\infty$, 
\begin{equation}\label{e:kappalim}
    \int_{\R^d} f(x)\,d\kappa^n_t(x) \to
    \int_{\R^d} f(x)\,d\kappa^\infty_t(x) \quad\text{as $n\to\infty$.}
\end{equation}
Since the measures $\kappa^n_t$ are uniformly bounded in the space $\calM(\R^d)$, 
it suffices to prove this for functions $f$ of compact support.  But in this case we have
\[
    \int_{\R^d} f(x)\,d\kappa^n_t(x)  = \int_\Omega f(X^n_t(z))\,d\lambda(z),
    \quad n\in \N\cup\{\infty\}.
\]
For any subsequence of these quantities, there is a further subsequence along which
$X^n_t$ converges to $X^\infty_t$ a.e.~in $\Omega$. 
We conclude that \eqref{e:kappalim} holds by using
the dominated convergence theorem and the uniqueness of the limit.
\end{proof}

\begin{remark}
   Consider a countably infinite set $\{(m_i,v_i)\}$ of mass-velocity data with 
   $\sum_i m_i = \lambda(\Omega)=1$ and arbitrary $v_i$.  A natural way to approximate 
   the pure point measure $\nu=\sum_i m_i\delta_{v_i}$ is by truncating to a finite sum 
   of Dirac masses and normalizing, taking $\nu_n = \tilde\nu_n/\tilde\nu_n(\R^d)$, where 
   $\tilde \nu_n = \sum_{i=1}^nm_i\delta_{v_i}$ are the partial sums.
   The Alexandrov theorem  (Theorem~\ref{thmAlex}) then can be used to provide 
   the velocity potential $\vp_n$, instead of McCann's theorem which is based on 
   cyclic monotonicity for couplings.  Theorem~\ref{thm_stability} then implies that
   the piecewise-rigidly breaking flows $X^n_t$ converge to $X_t$ in the sense that
   the restricted Lebesgue measures $\lambda\mres X^n_t(A^n)$ converge 
   weak-$*$ to $\lambda\mres X_t(A)$.

   Evidently this still relies on cyclic monotonicity and Rockafellar's theorem,
   however, through the proof of Theorem~\ref{thm_stability} above.  
   It could be interesting to seek a stability proof that avoids this reliance and
   proceeds completely in the spirit of Minkowski and Alexandrov, perhaps 
   using a standard stability theorem for \MA\ measures like 
   Prop.~2.6 in \cite{Figalli-MongeAmpere}. 
\end{remark}

%\pagebreak
%s------------------------------------------
\section{Incompressible optimal transport flows with convex source}\label{s:incompressible}

In this section we complete our characterization of incompressible optimal transport flows
with convex source as was mentioned in the Introduction.
Our paper \cite{LPS19} with Dejan Slep\v cev mainly concerned transport
distance along volume-preserving paths of set deformations.  In terms of optimal transport, 
effectively this means studying paths $t\mapsto \rho_t=\lambda\mres\Omega_t$ comprising 
Lebesgue measure on a family of sets $\Omega_t$ having the same measure.
One of the main results of \cite{LPS19} was that, given two bounded measurable sets
$\Omega_0$ and $\Omega_1$ of equal measure, the infimum of the Benamou-Brenier action
\[
\calA = \int_0^1\int_{\R^d} |v|^2\,d\rho_t\,dt \,,
\]
subject to the transport equation $\D_t\rho+\nabla\cdot(\rho v)=0$, 
but further constrained by the requirement that the measures $\rho_t$ have the form
\begin{equation}\label{c:rho_constraint}
\rho_t=\lambda\mres\Omega_t\,,  \quad t\in [0,1],
\end{equation}
is the {\em same} as $d_W(\mu,\nu)^2$, the
squared  Monge-Kantorovich (Wasserstein) distance  between the measures
\begin{equation}\label{d:munu}
\mu=\lambda\mres\Omega_0, \quad \nu= \lambda\mres\Omega_1.
\end{equation}
The squared distance $d_W(\mu,\nu)^2$ is the infimum of $\calA$ 
{\em without} the constraint~\eqref{c:rho_constraint},  and 
the minimum is achieved for a unique minimizing path $(\mu_t)_{t\in[0,1]}$ 
known as the Wasserstein geodesic path.

Assume $\Omega_0$ and $\Omega_1$ are open, for the rest of this section.
Let $(\mu_t)_{t\in[0,1]}$ be the Wasserstein geodesic path connecting  
the measures $\mu$ and $\nu$ in \eqref{d:munu}.
Theorem~1.4 of \cite{LPS19} says that if the infimum of $\calA$ 
is achieved as described above at some path $(\rho_t)_{t\in[0,1]}$ 
satisfying the constraint \eqref{c:rho_constraint},
then $\rho_t=\mu_t$. That is, any minimizing path satisfying the incompressibility constraint
\eqref{c:rho_constraint} must be the Wasserstein geodesic path.

We refer to such minimizers as {\em incompressible optimal transport paths}. 
Let $(\rho_t)$ be such an incompressible optimal transport path.
Let $\psi$ be the convex Brenier potential whose gradient pushes $\mu=\rho_0$ to $\nu=\rho_1$: 
$\nabla\psi_\sharp\mu=\nu$. Then $\rho_t=(\nabla\psi_t)_\sharp\rho_0$ for each $t\in(0,1)$, 
where 
\begin{equation}\label{e:psitvp}
\psi_t(z) = \frac12|z|^2 + t\vp(z) \quad\text{with}\quad 
\vp(z) =  \psi(z) - \frac12|z|^2.
\end{equation}
At points of differentiabilty of $\psi$, the transport flow is given by 
\[
X_t(z) = \nabla \psi_t(z) = z + t v(z) \quad\text{with}\quad v = \nabla \vp\,.
\]
This velocity potential $\vp$ is semi-convex, by \eqref{e:psitvp}.

Because $\Omega_0$, $\Omega_1$ are bounded open sets and
the characteristic functions on $\Omega_0$ and $\Omega_1$ are smooth,
according to the regularity theory of Caffarelli~\cite{Caffarelli1991}, Figalli~\cite{Fig10} and Figalli and Kim~\cite{FK2010},  $\nabla\psi$ is a smooth diffeomorphism 
$\nabla\psi\colon A_0\to A_1$, where $A_0\subset\Omega_0$ and $A_1\subset\Omega_1$ 
are open sets of full measure.

In this situation, we call the flow given by $X_t$ an {\em incompressible optimal transport flow}
taking $\Omega_0$ to $\Omega_1$.
Corollary~5.8 of \cite{LPS19} states that necessarily the velocity  $v$ of such a flow
is constant on each component of the open set $A_0$ of full measure in $\Omega_0$.
Therefore $\vp$ is locally affine a.e.\ and semi-convex. 

Then the range of $v=\nabla\vp$ is a countable set $\{v_i\}$ of distinct vectors in $\R^d$, 
$v=v_i$ on an open subset $A_i$ with positive measure $m_i=\lambda(A_i)>0$, and  
$\sum_i m_i = \lambda(\Omega_0)$. Recall that we refer to the set $\{(m_i,v_i)\}$ as
the {\em mass-velocity data} of the incompressible optimal transport flow.

\begin{definition}
Let $MV(\Omega_0)$ denote the collection of countable sets of pairs $(m_i,v_i)$ 
such that the $v_i$ are uniformly bounded and distinct in $\R^d$ ($v_i=v_j$ implies $i=j$), 
the $m_i$ are positive, and  $\sum_i m_i = \lambda(\Omega_0)$.  
\end{definition}
As we have just seen, each incompressible optimal transport flow determines
some set of mass-velocity data in $MV(\Omega_0)$. 
The result we are aiming at asserts that 
this association is {\em bijective} if the source domain is convex.

\begin{theorem}\label{thm_optimal}
   Let $\Omega_0$ be a convex bounded open set in $\R^d$.
   Given any incompressible optimal transport flow taking $\Omega_0$ to 
   some other bounded open set, let $\{(m_i,v_i)\}\in MV(\Omega_0)$
   be the mass-velocity data of the flow as described above.
   Then this map from flows to data is bijective.
\end{theorem}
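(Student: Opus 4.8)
The plan is to prove bijectivity by establishing injectivity and surjectivity separately, leveraging the two directions that the paper has already set up. For \textbf{surjectivity}: given any data $\{(m_i,v_i)\}\in MV(\Omega_0)$, Theorem~\ref{thm_countable} produces a convex, locally-affine-a.e.\ potential $\vp$ with $\nabla\vp=v_i$ on a convex set $A_i$ of measure $m_i$. Set $\psi_t(z)=\tfrac12|z|^2+t\vp(z)$; since $\vp$ is convex, $\psi_t$ is convex for all $t\ge0$, so $X_t=\nabla\psi_t$ is the Brenier map and $\rho_t=(X_t)_\sharp\mu$ traces the Wasserstein geodesic from $\mu=\lambda\mres\Omega_0$ to $\rho_1$. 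By Lemma~\ref{lem:injective}, $X_t$ is injective on $A=\bigsqcup_i A_i$; by Proposition~\ref{prop:mu_convex}, $(X_t)_\sharp\mu=\lambda\mres X_t(A)$, so $\rho_t$ is Lebesgue measure on an open set of the right volume — i.e.\ the incompressibility constraint \eqref{c:rho_constraint} holds. Hence $(\rho_t)$ is an incompressible optimal transport path taking $\Omega_0$ to $\Omega_1:=X_1(A)$ (up to null sets), and by construction its mass-velocity data is exactly $\{(m_i,v_i)\}$. One should check that the boundedness of the $v_i$ is what guarantees $\Omega_1$ is bounded, and that the regularity theory invoked in the paragraph before the theorem (Caffarelli--Figalli--Kim) is not actually needed for \emph{this} direction — we only need the flow to exist and be incompressible, not smoothness of $\nabla\psi$.

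For \textbf{injectivity}: suppose two incompressible optimal transport flows $X_t$, $\tilde X_t$, with velocity potentials $\vp$, $\tilde\vp$, have the same mass-velocity data $\{(m_i,v_i)\}$. As recalled in the text, both $\vp$ and $\tilde\vp$ are locally affine a.e.\ and semi-convex, hence \emph{convex} by Theorem~\ref{thm_semi}. Now apply the uniqueness clause of Theorem~\ref{thm_countable}: a convex, locally-affine-a.e.\ potential on the convex set $\Omega_0$ with $\nabla\varphi=v_i$ on a set of measure $m_i$ is unique up to an additive constant. Therefore $\tilde\vp=\vp+\text{const}$, so $\nabla\tilde\vp=\nabla\vp$ a.e., so $\tilde X_t=X_t$ a.e.\ on $\Omega_0$ for every $t$, and the two flows $(\rho_t)$, $(\tilde\rho_t)$ coincide. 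This gives injectivity of the data map. Combining the two parts yields the stated bijection.

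The main obstacle I anticipate is \emph{matching up the two notions of ``flow''} so that the round trip closes cleanly. Going data $\to$ flow $\to$ data requires knowing that the potential recovered from a given incompressible optimal transport flow is the \emph{same} (up to a constant) as the McCann/Brenier potential used in the forward construction; this is where Theorem~\ref{thm_semi} is essential — it is what upgrades the merely-semi-convex potential coming from \cite{LPS19} to a genuinely convex one, so that the uniqueness in Theorem~\ref{thm_countable} applies. A secondary subtlety is bookkeeping about sets of full measure: $A_0$, the $A_i$, and $X_t(A)$ are all defined only up to Lebesgue-null sets, so the argument should be phrased in terms of measures $\rho_t$ and a.e.\ equality of maps, never pointwise identity of open sets. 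Once those are handled, the core logical skeleton — \emph{existence} from Theorem~\ref{thm_countable}, \emph{incompressibility} from Proposition~\ref{prop:mu_convex} and Lemma~\ref{lem:injective}, and \emph{uniqueness} from Theorem~\ref{thm_semi} plus the uniqueness in Theorem~\ref{thm_countable} — is short.
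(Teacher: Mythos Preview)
Your proposal is correct and follows essentially the same approach as the paper: surjectivity via Theorem~\ref{thm_countable} (plus the observation that the resulting flow is incompressible and optimal, for which you rightly invoke Lemma~\ref{lem:injective} and Proposition~\ref{prop:mu_convex}), and injectivity via Theorem~\ref{thm_semi} followed by the uniqueness clause of Theorem~\ref{thm_countable}. Your identification of the key step---upgrading semi-convexity to convexity so that the uniqueness in Theorem~\ref{thm_countable} applies---is exactly the hinge the paper uses, and your remarks about boundedness of the $v_i$ ensuring boundedness of $\Omega_1$ and about working modulo null sets are apt.
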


\begin{proof}
Let an incompressible optimal transport flow be given as above, taking
$\Omega_0$ to some bounded open set $\Omega_1$ with the same measure.
Such a flow, and its associated mass-velocity data $\{(m_i,v_i)\}\in MV(\Omega_0)$,
is determined uniquely by the a.e.-locally affine and semi-convex velocity potential $\vp$.
Since $\Omega_0$ is convex, the potential $\vp$ is necessarily convex by Theorem~\ref{thm_semi}. 
Then Theorem~\ref{thm_countable} applies.
Because of the invariance of $\vp$ under reordering of the data as discussed in 
Remark \ref{r:pp},
the set of pairs $\{(m_i,v_i)\}$ 
determines $\vp$ (up to a constant), and hence the flow, uniquely.

Conversely, given any countable set $\{(m_i,v_i)\}$ in $MV(\Omega_0)$,
Theorem~\ref{thm_countable} provides 
velocity potential $\vp$ that is convex and locally affine a.e.\ on $\Omega=\Omega_0$ 
The velocity field $v=\nabla\vp$ defined a.e.\ is bounded, rigidly breaks $\Omega_0$,
and the ensuing flow is an incompressible optimal transport flow.
\end{proof}

%s------------------------------------------
\section{Shapes of shards}\label{s:shape}

In Section~\ref{s:finite}, we have seen that when the number of pieces $A_i$ is finite, 
the pieces are bounded by hyperplanes, like polytopes.  
And in general, with infinitely many pieces possible, the pieces are convex.
It is interesting to investigate what shapes the pieces may have.
In this section we will discuss constructions that show
a given piece may take an arbitrary convex shape, for example, 
or that all pieces can be round balls.

\subsection{Power diagrams} 

Recall that in the case of finitely many pieces, the $A_i$ are
determined by the condition (6).
This means that, with 
$\varphi(x)=v_i\mdot x+h_i$ in $A_i$ as in \eqref{e:affine},
\begin{equation}\label{Ai:1}
A_i = \{x\in \Omega: 
v_i\mdot x + h_i > v_j\mdot x + h_j \mbox{ for all $j\ne i$}\} \,.
\end{equation}
Through completing the square, this provides the equivalent description
\begin{equation}\label{Ai:2}
A_i = \{x\in \Omega: |x-v_i|^2 - w_i < |x-v_j|^2 - w_j \mbox{ for all $i\ne j$}\},
\end{equation}
where $w_i = 2h_i+|v_i|^2$. 
This realizes the decomposition of $\Omega$ into the pieces $A_i$
as a {\em power diagram} determined by the points $v_i$ and weights $w_i$.
Power diagrams are a generalization of Voronoi tesselations 
(for which the $w_i=0$) and which have many uses in computational 
geometry and other subjects, see~\cite{Au87,AuBook}.

In the general case here, when $\varphi$ is convex and locally affine 
a.e.~with countably many pieces possible, the pieces $A_i$ satisfy
\begin{align} \label{Ai:inf1}
A_i &= \interior\bigl\{x\in \Omega: 
v_i\mdot x + h_i \ge \sup_{j\ne i} v_j\mdot x + h_j\,\bigr\} \,,
\end{align}
($\interior$ denotes the interior)
or with $w_i = 2h_i+|v_i|^2$ as before, 
\begin{align} \label{Ai:inf2}
A_i &= \interior\bigl\{x\in \Omega: |x-v_i|^2 - w_i \le \inf_{j\ne i} |x-v_j|^2 - w_j\,\bigr\}\,.
\end{align}
Thus the decomposition of $\Omega$ into the $A_i$ can be considered
as a countable power diagram determined by the countably many points $v_i$
and weights $w_i$.

\subsection{Full packings by balls}\label{ss:full_packing}

The power-diagram description motivates the possibility that with countably
many pieces, the pieces can assume some convex shape different from a
polytope, such as a ball.  We will describe three ways that
optimal breaking can produce pieces that are {\em all} ball-shaped.

Take $\Omega\subset\R^d$ as any bounded open convex set. By a {\em full packing} 
of $\Omega$ by balls we mean a countable collection of disjoint open balls
$B_i=\{x: |x-x_i|<r_i\}$ in $\Omega$ with centers $x_i$ and radii $r_i$, such
that the union $B=\bigsqcup_i B_i$ is an open set of full measure in $\Omega$.

\begin{lemma}\label{l:balls}
Given any full packing $\{B_i\}$ of $\Omega$ by balls, there exists a 
function $\varphi$ convex and locally affine a.e., with 
pieces $A_i=B_i$, such that $\nabla\phi$ maps $B_i$ to the center of $B_i$.
\end{lemma}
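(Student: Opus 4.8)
The plan is to realize the desired convex potential directly as a supremum of affine functions, one per ball, chosen so that the ball $B_i$ is precisely the region where the $i$-th affine function dominates. Given the full packing $\{B_i\}$ with $B_i=\{x:|x-x_i|<r_i\}$, I want $\nabla\vp = x_i$ on $A_i$, which by the completed-square description \eqref{Ai:inf2} means I must pick weights $w_i$ (equivalently constants $h_i$ via $w_i=2h_i+|v_i|^2$ with $v_i=x_i$) so that
\[
A_i = \interior\bigl\{x\in\Omega : |x-x_i|^2 - w_i \le |x-x_j|^2 - w_j \text{ for all } j\ne i\bigr\} = B_i.
\]
First I would observe that the natural guess $w_i = r_i^2$ does exactly this: with that choice, $|x-x_i|^2 - r_i^2 \le 0$ holds precisely on $\bar B_i$, so setting $\vp(x) = \frac12|x|^2 - \frac12\inf_i\bigl(|x-x_i|^2 - r_i^2\bigr) = \sup_i\bigl(x_i\cdot x + h_i\bigr)$ with $h_i = \tfrac12(r_i^2 - |x_i|^2)$ gives a convex function (supremum of affine functions, finite on the bounded set $\Omega$) that is affine with gradient $x_i$ on the open set where the $i$-th term strictly dominates.

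The key step is then to check that the set where the $i$-th affine function strictly dominates all others is exactly $B_i$, and that these sets exhaust $\Omega$ up to measure zero. For the first point: if $x\in B_i$ then $|x-x_i|^2 - r_i^2 < 0$; since the balls are disjoint, for $j\ne i$ we have $x\notin B_j$, so $|x-x_j|^2 - r_j^2 \ge 0 > |x-x_i|^2 - r_i^2$, giving strict domination by index $i$. Conversely, if $x$ lies in the interior of the set where index $i$ weakly dominates, then in particular $|x-x_i|^2 - r_i^2 \le |x-x_j|^2-r_j^2$ for all $j$; combined with the fact that $x$ has a neighborhood on which $\vp$ agrees with $x_i\cdot x + h_i$, and that $\bigsqcup_j B_j$ has full measure, a short argument shows $x$ cannot lie in any $\bar B_j$ with $j\ne i$ and cannot lie on $\D B_i$, hence $x\in B_i$. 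For the exhaustion point: since $B=\bigsqcup_i B_i$ has full measure in $\Omega$, and each $B_i\subset A_i$ by the above, the $A_i$ already cover a full-measure subset, so $\vp$ is locally affine a.e.; and $A_i\subset B_i$ forces $A_i = B_i$.

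The main obstacle I anticipate is purely a matter of care rather than depth: verifying that $\vp$ is genuinely \emph{locally affine a.e.}\ in the sense of \eqref{d:Aset}, i.e.\ that the set of bad points (where the infimum defining $\vp$ is attained by two or more distinct indices, or attained only in a limit) is Lebesgue-null. This follows because that bad set is contained in $\Omega\setminus\bigsqcup_i B_i$, which is null by the full-packing hypothesis; but one should be slightly careful that the $\sup$ over a countably infinite family of affine functions is still realized locally by a \emph{single} member near a.e.\ point, which again is guaranteed once we know such a point lies in some $B_i$ and that on $B_i$ the $i$-th term strictly exceeds all others (with the strict gap uniform on compact subsets of $B_i$). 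Convexity of $\vp$ and the resulting injectivity of $X_t$ are then automatic from Lemma~\ref{lem:injective}, and no appeal to Theorem~\ref{thm_countable} is needed since we have constructed $\vp$ explicitly.
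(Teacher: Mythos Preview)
Your proposal is correct and takes essentially the same approach as the paper: both define $\vp(x)=\sup_i(x_i\mdot x + h_i)$ with $v_i=x_i$ and $h_i=\tfrac12(r_i^2-|x_i|^2)$ (equivalently weights $w_i=r_i^2$), then verify that the resulting power cells coincide with the balls $B_i$. The only difference is presentational: the paper dispatches the identification $A_i=B_i$ in one stroke by noting that density of $\bigcup_{j\ne i}B_j$ in $\Omega\setminus B_i$ gives the characterization $B_i=\{x\in\Omega:|x-x_i|^2-r_i^2<\inf_{j\ne i}|x-x_j|^2-r_j^2\}$ directly, whereas you carry out the two inclusions and the measure-exhaustion argument more explicitly.
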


\begin{proof}
Since $\bigcup_{j\ne i}B_j$ is dense in $\Omega\setminus B_i$, we can say
\begin{equation}\label{d:Bi}
B_i = \{x\in \Omega: |x-x_i|^2 - r_i^2 <\inf_{j\ne i} |x-x_j|^2 - r_j^2 \}.
\end{equation}
Comparing this with \eqref{Ai:inf2}, we see that the $B_i$ constitute
a power diagram determined by the ball centers $x_i$ and squared radii
$w_i=r_i^2$. We infer that the convex function defined by 
\begin{equation}\label{d:phiV}
\varphi(x) = \sup_i v_i\mdot x + h_i \quad\mbox{ with\ \ } 
v_i = x_i\,, \quad h_i = \frac12(r_i^2-|x_i|^2)\,,
\end{equation}
is locally affine a.e., with 
pieces $A_i=B_i$ and  $\nabla\varphi=x_i$ in $A_i$.
\end{proof}

Any velocity potential $\varphi$ produced by this lemma cannot be $C^1$,
for each point in the set of ball centers $\{x_i\}$ is isolated,
so $\nabla\varphi(\Omega)$ cannot be connected.

Full packings by balls can be produced in a variety of ways. 
Three that are interesting to discuss are
(i) using Vitali's covering theorem;
(ii) so-called {\em osculatory} packing;
(iii) Apollonian packing.

\subsubsection*{(i) Using Vitali's covering theorem}
The collection of all open balls in $\Omega$ constitutes a 
{\em Vitali covering} of $\Omega$, so a full packing of $\Omega$ by balls exists 
by the Vitali covering theorem \cite[Thm. III.12.3]{DS}. 
Actually, one can specify a finite number of the balls at will:
Take $B_1,\ldots,B_k$ to be given disjoint balls in $\Omega$. Then
apply the Vitali covering theorem to the collection of open balls in 
$\Omega\setminus \bigcup_{i=1}^k \bar B_i$.

\subsubsection*{(ii) Osculatory packings}
A sequence $\{B_j\}$ of disjoint balls in $\Omega$ is called {\em osculatory} 
if $B_i$ is a ball of largest possible radius in 
$\Omega\setminus\bigcup_{j=1}^{i-1} B_j$ whenever $i$ is greater than some $k$.
Boyd~\cite{Boyd70} elegantly proved that an osculatory sequence 
in any open set $\Omega\subset\R^d$ of finite measure is a full packing.
Earlier, Melzak \cite{Melzak} had proved this
for the case of dimension $d=2$ and when $\Omega$ itself is a disk.

\subsubsection*{(iii) Apollonian packings of disks}
A classic and beautiful tree construction that produces an osculatory packing in
case $\Omega$ is the unit disk in $\R^2$ is associated with the name of
Apollonius of Perga, who in antiquity classified all configurations of
circles tangent to three given ones.

Start with two circles bounding disjoint disks $B_1$, $B_2$ in $\Omega$, 
tangent to each other and tangent to the unit circle. 
These circles determine two curvilinear triangles. 
At stage 1, inscribe a circle in each of the curvilinear triangles.
These circles bound new disks $B_3$, $B_4$ and divide each curvilinear triangle
into three smaller ones. 
At each subsequent stage we continue by inscribing a circle in each of the 
curvilinear triangles created at the previous stage, adding the disks they bound
to the collection, and subdividing the curvilinear `parent' triangle into three `children.'
From the two triangles and disks we start with at stage $1$, 
upon completing stage $k$ we have $2\cdot 3^k$ disks at stage $k$.

Rearranged in order of decreasing radii, the sequence of disks produced 
in this way is osculatory. A proof that this Apollonian sequence 
produces a full packing of $\Omega$ was provided by Kasner \& Supnick in 1943~\cite{KS}.
The closed set $\bar\Omega\setminus \bigcup_i B_i$, determined by removing
the open disks in an Apollonian packing from the unit disk, is 
known as an {\em Apollonian gasket}. It has measure zero and is nowhere dense.

\begin{figure}%\begin{center}
\centerline{\includegraphics[width=4.2in]
{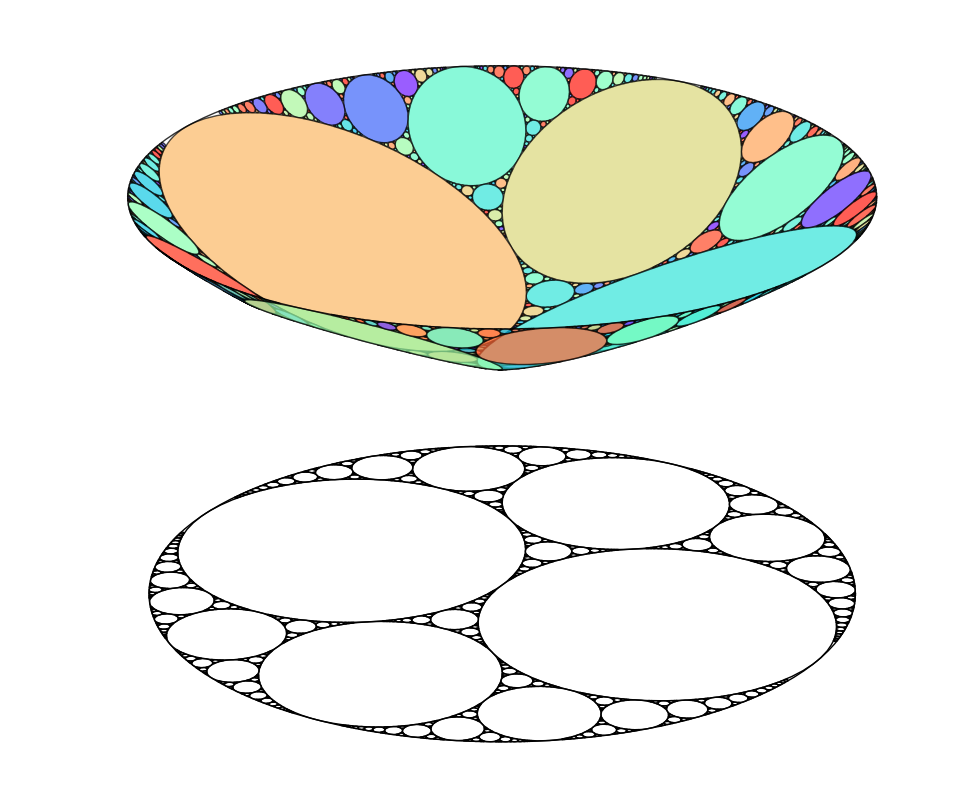}}
%{apollonian_dish2b.png}
\caption{Apollonian bowl: graph of velocity potential locally affine a.e.}\label{f:bowl}
%\end{center}
\end{figure}

Apollonian packings can be generated algorithmically using the generalized Descartes
circle theorem due to Lagarias~\etal~\cite{LagariasEtal02}.  
If parent circles $C_1$, $C_2$, $C_3$ (possibly including the unit circle)
are mutually tangent and tangent to children $C_4$ and $C_5$,
and $C_i$ has complex center $z_i$ and
curvature $b_i=1/r_i$ (with $b_i=-1$ for the outer unit circle),
this theorem implies
\begin{align*}
b_4+b_5 &= 2(b_1+b_2+b_3),\\
b_4z_4+b_5z_5 &= 2(b_1z_1+b_2z_2+b_3z_3).
\end{align*}
From the data $b_i$ and $z_i$ for three parent circles
and one child, these equations determine the entire packing.
Famously, all curvatures $b_i$ are integers if the initial four are.
Possibly, this property was first noticed only in the 20th century
by the chemist Soddy~\cite{soddy1937bowl}. 
In Fig.~\ref{f:bowl} we plot the graph of the convex and a.e.-locally
affine velocity potential generated by Lemma~\ref{l:balls} in this case.
Recall that Fig.~\ref{f:spray} illustrates the rigidly separated disks
$X_t(B_i)$ at time $t=0.5$ as shaded in blue.

\subsection{Shards with arbitrary convex shape}

As promised, we will show here that it is possible 
for some piece to assume an arbitrary convex shape.
Let $\Omega\subset\R^d$ be a bounded open convex set, and let $U$ be 
any convex open subset of $\Omega$.
Without loss of generality, for convenience we translate and scale coordinates 
so that $0\in U$ and $\Omega$ is contained in the unit ball $\{x:|x|<1\}$.

To begin, we construct a sequence of approximations to the distance function
\[
\Phi(x) := \dist(x,U) =\inf\{|x-y|:y\in U\}.
\]
The function $\Phi$ is convex, and of course, 
$\bar U=\{x\in\bar\Omega: \Phi(x)=0\}$.
Let $\{\sigma_i\}_{i\in\N}$ be a sequence of unit vectors in $\R^d$ dense in the sphere 
$\mathbb{S}^{d-1}$, and for each $i$ choose $x_i\in\D U$ to maximize $\sigma_i\mdot x$
on $\bar U$.  Then it is simple to show that 
\begin{equation}
\Phi(x) = 0\vee \sup_{i\in\N} \sigma_i\mdot (x-x_i).
\end{equation}
For each $n\in\N$, put
\[
\Phi_n(x) = 0\vee \max_{1\le i\le n} \sigma_i\mdot (x-x_i). 
\]
Then $\Phi_n(x)$ increases as $n\to\infty$ to the limit $\Phi(x)$ for all $x$, with
\begin{equation}\label{e:Pnest}
0\le \Phi_n(x)\le \Phi(x)\le1. \qquad 
\end{equation}
Moreover, $\Phi_n$ is convex and piecewise affine, and 
since $|\nabla\Phi_n|\le 1$ a.e.~we have
$|\Phi_n(x)-\Phi_n(y)|\le |x-y|$ 
for all $x,y\in\Omega$.
Invoking the Arzela-Ascoli theorem we can conclude that $\Phi_n$ converges uniformly to $\Phi$.
Thus, for any $k\in\N$ there exists $N_k$ such that for all $n\ge N_k$,
\begin{equation}\label{Phiest}
\sup_{x\in\Omega} |\Phi_n(x)-\Phi(x)| <\frac1k \,.
\end{equation}

With these preliminaries, we can construct a convex function, locally affine a.e.,
having $U$ as one of its 
pieces, as follows.
\begin{prop}
Let $\{a_k\}_{k\in\N}$ be a decreasing sequence of positive numbers satisfying
$a_{k+1}\le \frac1k a_k$ for all $k$.  Let
\begin{equation}
\varphi(x) = \sup_k a_k \Phi_{N_k}(x) \,, \qquad x\in\Omega.
\end{equation}
Then $\varphi$ is nonnegative, convex, and locally affine a.e., with $\varphi(x)=0$
if and only if $x\in\bar U$.
\end{prop}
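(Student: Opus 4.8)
The plan is to establish the three conclusions in turn, writing $g_k:=a_k\Phi_{N_k}$ throughout. Since each $\Phi_{N_k}$ is nonnegative, convex and piecewise affine on $\R^d$ and $a_k>0$, each $g_k$ is nonnegative and convex, and $0\le g_k\le a_k\le a_1$ by \eqref{e:Pnest}. Hence $\varphi=\sup_k g_k$ is a finite, nonnegative, convex function on $\Omega$, and in particular continuous there. The only consequence of the hypothesis $a_{k+1}\le\tfrac1k a_k$ that I shall need here is that it forces $a_k\to0$ (indeed $a_k\le a_1/(k-1)!$). For the vanishing set: if $x\in\bar U$ then $\Phi(x)=\dist(x,U)=0$, so $0\le\Phi_{N_k}(x)\le\Phi(x)=0$ for all $k$ and $\varphi(x)=0$; and if $x\in\Omega\setminus\bar U$ then $\Phi(x)>0$, while by \eqref{Phiest} we have $\Phi_{N_k}(x)>\tfrac12\Phi(x)$ once $k>2/\Phi(x)$, whence $\varphi(x)\ge g_k(x)>\tfrac12 a_k\Phi(x)>0$. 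Thus $\{\varphi=0\}=\bar U$.

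For local affineness a.e., I would decompose $\Omega$ into the open set $U$, the open set $\Omega\setminus\bar U$, and the set $\Omega\cap\partial U$, the last of which is Lebesgue-null since it lies in the boundary of a convex body. On $U$ we have $\varphi\equiv0$, which is affine. For a point $x_0\in\Omega\setminus\bar U$ set $c_0:=\varphi(x_0)$, which is positive by the previous paragraph; by continuity of $\varphi$ there is an open ball $V\ni x_0$ with $V\subset\Omega$ and $\varphi>c_0/2$ on $V$, and since $a_k\to0$ there is $K$ with $a_k<c_0/2$ for all $k>K$. Then for every $k>K$ we have $g_k\le a_k<c_0/2<\varphi$ on $V$, so such terms never attain the supremum on $V$ and $\varphi=\max_{1\le k\le K}g_k$ on $V$. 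Since each $g_k$ is the maximum of the finitely many affine maps $x\mapsto 0$ and $x\mapsto a_k\sigma_i\mdot(x-x_i)$ for $1\le i\le N_k$, the restriction $\varphi|_V$ is a maximum of finitely many affine functions, hence a convex piecewise-affine function; such a function is affine in a neighborhood of every point outside a finite union of hyperplanes, so it is locally affine at $\lambda$-a.e.\ point of $V$. Covering $\Omega\setminus\bar U$ by countably many such balls $V$ (Lindel\"of), I conclude that $\varphi$ is locally affine a.e.\ on $\Omega\setminus\bar U$, hence, together with $U$ and the null set $\Omega\cap\partial U$, a.e.\ on $\Omega$.

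I expect the one genuine obstacle to be the local reduction of the countable supremum defining $\varphi$ to the finite maximum $\max_{1\le k\le K}g_k$ on a neighborhood of each point of $\Omega\setminus\bar U$; this is precisely the step that uses the decay $a_k\to0$, ensuring that only finitely many of the $g_k$ can come within a fixed positive gap of the value $\varphi$ there. Granting that reduction, the remaining arguments are routine facts about convex piecewise-affine functions and about boundaries of convex sets.
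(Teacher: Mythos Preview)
Your proof is correct and follows essentially the same strategy as the paper's: show that near any point of $\Omega\setminus\bar U$ the supremum defining $\varphi$ reduces to a finite maximum of affine functions, hence is piecewise affine there, while $\varphi\equiv0$ on $U$ and $\partial U\cap\Omega$ is Lebesgue-null.

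The only noteworthy difference is tactical. The paper exploits the specific inequality $a_{k+1}\le a_k/k$ together with \eqref{Phiest} to show that the partial maxima $\varphi_k=\max_{i\le k}a_i\Phi_{N_i}$ stabilize on the explicit region $\{\Phi>2/k\}$, yielding a uniform finite description of $\varphi$ on each such set. You instead observe (correctly) that only the consequence $a_k\to0$ is needed: at each $x_0\notin\bar U$ you compare $g_k\le a_k$ directly against the positive value $\varphi(x_0)$ and use continuity of $\varphi$ to localize. Your route is slightly more general in its hypothesis and arguably more transparent; the paper's route gives a somewhat more explicit quantitative picture of where the supremum truncates. Both are short and sound.
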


\begin{proof}
Let
$\varphi_k(x) = \max_{1\le i\le k} a_i \Phi_{N_i}(x)$. 
Then $\varphi_k$ is nonnegative and piecewise affine, and it vanishes on a polytope 
containing $U$.  If $\dist(x,U)=\Phi(x)>\frac2k$ and  
$x\in\Omega$, then by \eqref{Phiest} and \eqref{e:Pnest},
\[
\varphi_k(x) \ge a_k \Phi_{N_k}(x) > \frac {a_k}k\ge a_{k+1}\ge a_{k+1}\Phi_{N_{k+1}}(x),
\]
hence $\varphi_{k+1}(x)=\varphi_k(x)$.
By consequence, $\varphi$ is piecewise affine outside any open neighborhood of $\bar U$.
We can conclude it is locally affine a.e.~in $\Omega$ and vanishes only on $\bar U$.
\end{proof}

%s------------------------------------------
\section{Discussion}\label{s:discuss}

In this paper we have focused attention on flows
that rigidly break a convex domain, flows of a type that permits
a classification in terms of mass-velocity data for the pieces.
In particular, we have investigated conditions under which 
rigidly breaking potential flows must arise from a {\em convex} potential. 
As mentioned in Remark~\ref{rem:conjecture},
it may be reasonable to conjecture that the conditions 
(i)-(ii) in Theorem~\ref{thm_main} which ensure the
potential's convexity may be weakened or discarded.
We have also investigated and illustrated 
several differences between flows that break a domain into
finitely many vs. infinitely many pieces.

We conclude this paper with a discussion of a few points, concerning:
(a) conditions that ensure the velocity field can be realized
as the gradient of a continuous potential;
(b) in our one-dimensional example of subsection~\ref{ss:cantor},
the fat Cantor sets expand {\em uniformly} in time;
(c) some necessary criteria for a rigidly breaking velocity field
to be continuous in dimensions $d>1$.

\subsection{Sufficient conditions for continuity of the potential}

In Theorem~\ref{thm_main} we assume 
the velocity field is the gradient of a potential $\vp$
that is locally affine a.e.~in the convex set $\Omega$,
and we assume {\em a priori} that $\vp$ is continuous.
In this subsection we briefly investigate conditions on $v$
that are sufficient to ensure these properties.

In order that  some $\vp\in L^1_{\rm loc}(\Omega)$ should exist with
$v=\nabla\vp$ in the sense of distributions,
it is simple to check that necessarily the distributional Jacobian matrix
$(\D_jv_k)$ should be symmetric. In physical terms, this 
means that the velocity field should generate {\em no shear}.

Some integrability condition on $v$ appears needed as well.
Note, however, that Theorem~\ref{thm_countable}, our countable Alexandrov theorem, 
provides a rigidly breaking velocity field $v$ that fails to be in 
$L^1(\Omega)$ if the mass-velocity data is such that
$\sum_i m_i|v_i|=\infty$. However, since $v=\nabla\vp$ with $\vp$ convex,
necessarily $v$ is {\em locally} bounded a.e.~in $\Omega$.

In order to ensure that a velocity field $v=\nabla\vp$ with $\vp$
continuous, then, we should require $v$ is curl-free
and it is reasonable to require some local boundedness or integrability in $\Omega$.
We find the following conditions are indeed sufficient.

\begin{prop}
    Let $\Omega\subset\R^d$ be bounded, open and convex.
    Suppose that for some $p>d$, $v\in L^p_{\rm loc}(\Omega,\R^d)$ and 
    its (matrix-valued) distributional derivative is symmetric.
    Then $v=\nabla \vp$ a.e.~in $\Omega$,
    for some locally H\"older continuous function $\vp:\Omega\to\R$.
\end{prop}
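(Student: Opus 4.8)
The plan is to reduce the statement to two classical facts: a Poincar\'e-type lemma ensuring that a curl-free $L^p$ vector field on a convex (hence simply connected) domain is a distributional gradient, and Morrey's embedding $W^{1,p}\hookrightarrow C^{0,1-d/p}$ valid since $p>d$. So the first half of the argument produces a potential $\vp\in W^{1,p}_{\rm loc}(\Omega)$ with $\nabla\vp=v$ a.e., and the second half reads off its local H\"older continuity.

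For the first half I would argue by mollification. Fix $x_0\in\Omega$ and a standard mollifier $\rho_\eps$, and put $v_\eps=v*\rho_\eps$, which is smooth on the convex open set $\Omega_\eps=\{x\in\Omega:\dist(x,\D\Omega)>\eps\}$. Since mollification commutes with distributional differentiation, the matrix $(\D_j(v_\eps)_k)$ is symmetric, so $v_\eps$ is curl-free in the classical sense, and on the convex set $\Omega_\eps$ the line-integral formula
\[
\vp_\eps(x)=\int_0^1 v_\eps\bigl(x_0+s(x-x_0)\bigr)\mdot(x-x_0)\,ds
\]
defines a smooth function with $\nabla\vp_\eps=v_\eps$ and $\vp_\eps(x_0)=0$. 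Because $v_\eps\to v$ in $L^p_{\rm loc}(\Omega)$, on every ball $B$ with $\bar B\subset\Omega$ the gradients $\nabla\vp_\eps$ are bounded in $L^p(B)$ for $\eps$ small, and the normalization $\vp_\eps(x_0)=0$ together with the Poincar\'e (or Morrey) inequality on a slightly larger ball gives a uniform bound on $\|\vp_\eps\|_{W^{1,p}(B)}$. A diagonal extraction over an exhaustion of $\Omega$ by such balls then yields a subsequence converging strongly in $L^p_{\rm loc}(\Omega)$ and weakly in $W^{1,p}_{\rm loc}(\Omega)$ to a single function $\vp:\Omega\to\R$ with $\nabla\vp=v$ a.e.\ in $\Omega$.

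For the second half, since $\vp\in W^{1,p}_{\rm loc}(\Omega)$ with $p>d$, Morrey's inequality applies on every ball $B$ with $\bar B\subset\Omega$ and furnishes a representative obeying $|\vp(x)-\vp(y)|\le C_B\,|x-y|^{1-d/p}$ for $x,y\in B$; that is, $\vp$ is locally H\"older continuous with exponent $1-d/p$.

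The only genuinely delicate point is the passage from ``symmetric distributional Jacobian'' to ``distributional gradient'': one must ensure that the $\vp_\eps$ actually converge to a limit whose \emph{distributional} gradient is $v$, not merely that $\nabla\vp_\eps\to v$ as $L^p$ functions, and this is exactly where the uniform $W^{1,p}$ bound and the fixed normalization $\vp_\eps(x_0)=0$ are used. One could instead invoke the Poincar\'e lemma for distributions/currents on a simply connected open set directly and bypass the explicit mollification, but the mollification route is elementary and self-contained; everything after the potential is constructed is a routine appeal to Morrey's embedding.
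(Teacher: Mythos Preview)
Your proposal is correct and follows essentially the same route as the paper: mollify to obtain smooth curl-free approximations on convex subdomains, integrate along line segments from a fixed base point to produce normalized potentials, and use Morrey's inequality (via the $L^p$ bound on the gradients with $p>d$) to pass to a locally H\"older continuous limit whose gradient is $v$. The only cosmetic difference is that the paper extracts the limit via uniform $C^{0,\alpha}$ bounds and Arzel\`a--Ascoli, whereas you phrase the compactness in $W^{1,p}_{\rm loc}$ and then read off H\"older continuity; these are interchangeable here.
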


\begin{proof}[Proof]
   By a standard cutoff and mollification argument we find a sequence of smooth velocity fields
   $v^k$ converging to $v$ in $L^p_{\rm loc}(\Omega)$. Fix $z_0\in\Omega$. Inside any convex subdomain 
   $\Omega'\subset\Omega$ with compact closure in $\Omega$ and containing $z_0$, 
   we can ensure that for $k$ sufficiently large, 
   the $v^k$ are curl-free, having symmetric Jacobian matrices 
   $\nabla v^{k}$ inside $\Omega'$. By path integration along line segments from $z_0$,
   we can define smooth $\vp^k$ on $\Omega$ such that $\vp^k(z_0)=0$
   and on $\Omega'$ we have $\nabla \vp^k=v^k$.  Then the sequence $(\nabla\vp^k)$
   is bounded in $L^p(\Omega')$ and by Morrey's inequality, $(\vp^k)$ is bounded in
   $C^\alpha$ norm on $\Omega'$ for $\alpha=1-d/p$.  Then it follows that $\vp^k$
   converges locally uniformly in $\Omega$ to a H\"older continuous limit $\vp$.
\end{proof}

Finally, we comment on what might happen with rigidly breaking flows if shear is allowed.  
Without the potential flow assumption,
it is easy to imagine a great variety of rigidly breaking flows that appear difficult to classify.
E.g., as a simple example consider $\Omega$ to be the unit ball in $\R^2$,
let $f$ be any function whose graph $x_2=f(x_1)$ disconnects
$\Omega$ in two pieces, and let $v$ be the velocity field 
that sends the upper piece moving rigidly upward and the lower piece 
downward at speed 1. 
If the graph is not a horizontal line, however, then the 
distributional curl of $v$ will be concentrated on the graph and nonzero.  

\subsection{Uniform expansion of the Cantor set}\label{ss:1d_wave}

Here we provide a proof of our comment in subsection~\ref{ss:cantor} 
regarding the uniform expansion of the Cantor set under the 
transported velocity field plotted in Fig.~\ref{f:cantor}. 
This figure plots the Cantor-function velocity $v=c(z)$ {\em vs.}~the transported location 
$x=z+tc(z)=X_t(z)$, which is a continuous and strictly increasing
function of $z$ for $t>0$.  Define this velocity as a function 
of $x\in\R$ and $t\ge0$ by 
\begin{equation}\label{d:f}
f(x,t) = c(z), \quad\mbox{ where } x = z+tc(z).
\end{equation}
(Here $c(z)=0$ for $z\le0$ and $=1$ for $z\ge1$.)
This is the Lax implicit formula for a solution of 
the inviscid Burgers equation $\D_t f+ \D_x(\frac12 f^2) = 0$.
The function $f(\cdot,t)$ is increasing.
As discussed in section~\ref{ss:cantor},
$f(\cdot,t)$ is constant on each component interval of the complement
of the ``expanded'' set
$\calC_t = \{z+tc(z): z\in\calC \}$,
which is a fat Cantor set 
of Lebesgue measure $\lambda(\calC_t)=\lambda(\calC_t)=t$.
Indeed, this set expands Lebesgue measure {\em uniformly}, as we now show.

\begin{prop}\label{prop:Lax} For $t>0$, the function
in \eqref{d:f} is given by 
\[
f(x,t) = \frac1t \int_0^x \one_{\calC_t}(s)\,d\lambda(s). 
\]
Thus $\D f/\D x = 0$ on $\calC_t^c$, and 
$\D f/\D x = 1/t$ 
at each Lesbegue point of $\calC_t$.
\end{prop}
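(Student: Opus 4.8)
\medskip
\noindent
The plan is to recognize $g(x):=\int_0^x\one_{\calC_t}(s)\,d\lambda(s)$ as $tf(x,t)$ by tracking how the map $X_t\colon z\mapsto z+tc(z)$ redistributes the pieces of an initial interval $[0,z]$. Since $c$ is continuous and nondecreasing (with $c\equiv0$ on $(-\infty,0]$ and $c\equiv1$ on $[1,\infty)$), $X_t$ is a strictly increasing homeomorphism of $\R$, so $X_t([0,z])=[0,X_t(z)]$ for every $z\ge0$; being a bijection, it carries the disjoint splitting $[0,z]=(A\cap[0,z])\sqcup(\calC\cap[0,z])$ (recall $A=(0,1)\setminus\calC$ and $\lambda(\calC)=0$) to the disjoint splitting
\[
[0,X_t(z)]=X_t\bigl(A\cap[0,z]\bigr)\ \sqcup\ \bigl(\calC_t\cap[0,X_t(z)]\bigr),
\]
using $X_t(\calC)=\calC_t$.

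First I would check that $\lambda\bigl(X_t(A\cap[0,z])\bigr)=z$. This holds because $A\cap[0,z]$ is a countable disjoint union of intervals, each contained in a single component $A_i$ of $A$ on which $c$ is the constant $v_i$; so $X_t$ restricted to each such interval is the translation $s\mapsto s+tv_i$, and translation invariance together with countable additivity of $\lambda$ give $\lambda(X_t(A\cap[0,z]))=\lambda(A\cap[0,z])=z$. Then, taking Lebesgue measure in the displayed splitting,
\[
X_t(z)=\lambda\bigl(X_t(A\cap[0,z])\bigr)+\lambda\bigl(\calC_t\cap[0,X_t(z)]\bigr)=z+\lambda\bigl(\calC_t\cap[0,X_t(z)]\bigr),
\]
so $\lambda(\calC_t\cap[0,X_t(z)])=X_t(z)-z=tc(z)$. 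Putting $x=X_t(z)$ and using $f(x,t)=c(z)$, this is exactly $g(x)=tf(x,t)$; the ranges $x\le0$ and $x\ge1+t$ are dispatched directly using $\calC_t\subset[0,1+t]$ and $\lambda(\calC_t)=t$ (or the conventions on $c$).

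For the last assertion I would note that $g$ is $1$-Lipschitz, hence $f(\cdot,t)=g/t$ is Lipschitz and differentiable a.e., with $g'(x)=\one_{\calC_t}(x)$ at every point of density $0$ or $1$ of $\calC_t$ by the Lebesgue differentiation theorem. Every point of the open set $\calC_t^c$ has density $0$, so $\D f/\D x=0$ there; and since $\lambda(\calC_t)=t>0$, almost every point of $\calC_t$ has density $1$, so $\D f/\D x=1/t$ at each Lebesgue point of $\calC_t$. The only delicate point is the bookkeeping in the first step: one must make sure that even the ``partial'' component intervals, where $[0,z]$ cuts across some $A_i$, are still moved by $X_t$ through a pure translation, so that no Lebesgue measure is created or destroyed. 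I expect that to be the main (and rather minor) obstacle; the rest is routine.
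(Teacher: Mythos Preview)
Your argument is correct, and the only point you flag as delicate---the partial component interval where $[0,z]$ cuts through some $A_i$---is harmless, since $c$ is constant on all of $A_i$ and so $X_t$ still acts on the truncated piece by the same translation $s\mapsto s+tv_i$.

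Your route is genuinely different from the paper's. The paper does not compute $g(x)=\int_0^x\one_{\calC_t}$ directly; instead it first proves the one-sided Lipschitz (Oleinik) bound
\[
0\ \le\ \frac{f(\hat x,t)-f(x,t)}{\hat x-x}\ \le\ \frac1t
\]
from the identity $\hat x-x=(\hat z-z)+t(c(\hat z)-c(z))$, deduces that $f$ is Lipschitz with $0\le \D f/\D x\le 1/t$ a.e., and then forces $\D f/\D x=1/t$ a.e.\ on $\calC_t$ by the integral constraint $1=f(1+t,t)=\int_{\calC_t}\D f/\D x\,d\lambda\le \lambda(\calC_t)/t=1$. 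The formula for $f$ then follows from the fundamental theorem of calculus. Your approach is more elementary and direct: it bypasses the Oleinik inequality entirely by exploiting that $X_t$ is a homeomorphism which rigidly translates each piece of $A$, so the measure identity $g(X_t(z))=tc(z)$ drops out of simple bookkeeping. The paper's approach, on the other hand, makes the connection to entropy solutions of Burgers' equation explicit, which is thematically relevant to the surrounding discussion.
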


\begin{proof} Fix $t>0$. The function $x\mapsto f(x,t)$ satisfies
a one-sided Lipschitz bound (Oleinik inequality), with a simple proof:
Say $\hat x = X_t(\hat z) > x = X_t(z)$. 
Then \[
\hat x - x = \hat z - z + t(c(\hat z)-c(z)) \ge \hat z - z \,,
\]
hence 
\[
0\le \frac{f(\hat x,t)-f(x,t)}{\hat x -x} = 
\frac{c(\hat z)-c(z)}{\hat x -x} = 
\frac1t \left(1-\frac{\hat z-z}{\hat x - x}\right)
\le \frac 1t.
\]

Since $f$ is increasing in $x$, it is Lipschitz, hence differentiable a.e., whence
$0\le \D f/\D x \le 1/t$. 
We infer from Lebesgue's 
version of the fundamental theorem of calculus that 
\begin{align*}
1 = c(1)= f(1+t,t) = \int_{\calC_t}\frac{\D f}{\D x}(s,t)\,d\lambda(s) \le \frac 1t\lambda(\calC_t) = 1.
\end{align*}
Then indeed $\D f/\D x(\cdot,t)=1/t$ a.e. in $\calC_t$, and
\[
f(x,t) = \frac1t \int_0^x \one_{\calC_t}(s)\,d\lambda(s). 
\]
Moreover this shows $\D f/\D x =1/t$ at every Lebesgue point of $\calC_t$. 
\end{proof}

\begin{remark}
    The function $f$ is in fact the entropy solution to the inviscid Burgers equation with initial data
    $f(x,0)=c(x)$, see \cite[Sec.~3.4]{Evans_book}.
\end{remark}

\subsection{On continuous velocities in multidimensions}

We lack any characterization like the one in Proposition~\ref{p:cacx1d} for
describing rigidly breaking velocity fields that are continuous when $d>1$. 
So here we confine ourselves to discuss some necessary constraints.

Suppose $v=\nabla\varphi$ is rigidly breaking and continuous,
where $\varphi$ is $C^1$, convex and locally affine a.e.~on
a bounded open convex set $\Omega\subset\R^d$. 
Let $\{v_i\}$ be the distinct values of $v$ on the 
open set $A$ in \eqref{d:Aset} where 
$\vp$ is locally affine.  Since $A$ is dense in $\Omega$, necessarily the 
set $\{v_i\}$ is dense in the continuous image $v(\Omega)$, which must be {connected},
as in the case $d=1$ treated in Proposition~\ref{p:cacx1d}.

Recall that for all $t\ge 0$, the flow map $X_t$ is a continuous injection 
from $\Omega$ onto $X_t(\Omega)$.
Indeed, it is a homeomorphism, since the inequality 
proved in Lemma~\ref{lem:injective},
\[
|X_t(z)-X_t(y)|\ge|z-y| \,,
\]
implies the inverse is a contraction.  Brouwer's domain invariance theorem 
(see \cite{Kulpa} or \cite[Sec.~1.6.2]{Tao-Hilbert})
implies $X_t(\Omega)$ is open in $\R^d$.
Topologically $X_t(\Omega)$ is the same as $\Omega$, 
not disconnected in any way nor having ``holes.'' 
Instead it is contractible to a point.
Moreover we can deform $\Omega$ into $v(\Omega)$ through the homotopy defined by
\[
S(x,\tau)= (1-\tau)x + \tau v(x)  \,, 
\]
noting $S$ is continuous on $\Omega\times[0,1]$.
Thus the image $v(\Omega)$ is a limit of homeomorphic images 
$S_\tau(\Omega)=X_t(\Omega)/(1+t)$,
$\tau= t/(1+t)$. 

But we have been unable to determine whether $v(\Omega)$ 
must be homotopy equivalent to $\Omega$, or whether this property, say,
would suffice to ensure $\varphi$ be $C^1$.
The monotonicity of the velocity (as in \eqref{e:vp_monotone})
should be relevant, since for example, the smooth but non-monotone map 
$v(x_1,x_2)=(\cos8x_1,\sin8x_1)$ maps the square $\Omega=(0,1)^2$ 
surjectively onto the unit circle.

\appendix

%s------------------------------------------
%s------------------------------------------
%s------------------------------------------

\section*{Acknowledgements}
We thank Robert McCann for pointing out that Theorem~\ref{thm_semi}
follows from~\cite[Lemma~3.2]{McCann97}. 
We are very grateful to an anonymous referee for
corrections and many detailed suggestions for clarification.
Thanks go also to the Isaac Newton Institute for
Mathematical Sciences, Cambridge, for support and hospitality during the
programme Frontiers in Kinetic Theory, where work on this paper was undertaken. 
This work was supported by EPSRC grant no EP/R014604/1.
This material is based upon work supported by the National Science Foundation
under grants DMS 2106988 (JGL) and 2106534 (RLP).

%\nocite{*}
%\bibliographystyle{}
\bibliographystyle{siam}
% \bibliography{broken_only.bib}
\bibliography{broken.bib}

\end{document}